\tikzset{/tikz/notestyleraw/.append style={text=black}}
\newtheorem{thm}{Theorem}[section]
\newtheorem{lem}[thm]{Lemma}
\newtheorem{prop}[thm]{Proposition}
\newtheorem{rmk}[thm]{Remark}
\newcommand{\be}{\begin{eqnarray}}
\newcommand{\ee}{\end{eqnarray}}
\newcommand{\beal}{\begin{aligned}}
\newcommand{\enal}{\end{aligned}}
\newcommand{\eps}{\varepsilon}
\newcommand{\lb}{\lambda}
\newcommand{\T}{\mathbb{T}}
\newcommand{\R}{\mathbb{R}}
\newcommand{\Q}{\mathbb{Q}}
\newcommand{\Z}{\mathbb{Z}}
\newcommand{\N}{\mathbb{N}}
\newcommand{\Lb}{\Lambda}
\newcommand{\Om}{\Omega}
\newcommand{\Dt}{\Delta}
\newcommand{\dt}{\delta}
\newcommand{\cB}{\mathcal{B}}
\newcommand{\cD}{\mathcal{D}}
\newcommand{\cS}{\mathcal{S}}
\newcommand{\cO}{\mathcal{O}}
\newcommand{\cP}{\mathcal{P}}
\newcommand{\cG}{\mathcal{G}}
\newcommand{\cH}{\mathcal{H}}
\newcommand{\cZ}{\mathcal{Z}}
\newcommand{\cQ}{\mathcal{Q}}
\newcommand{\cU}{\mathcal{U}}
\newcommand{\wh}{\widehat }
\newcommand{\wt}{\widetilde }
\newcommand{\ol}{\overline }
\title{Oscillatory orbits in the Restricted Planar 4 Body Problem}
\address{*\ Mathematics Institute, Academy of Mathematics and systems science
Chinese Academy of Sciences\\Beijing, China, 100190}
\email{jzhang87@amss.ac.cn}
\address{\dag\ Departament de Matem\`atiques, Universitat Polit\`ecnica de Catalunya}
\email{tere.m-seara@upc.edu}
\thanks{}
\subjclass[2010]{Primary 37N05, 37D10; Secondary 70F07, 70H09}
\keywords{}
\date{}
\begin{document}
\maketitle

\centerline{\scshape Tere M. Seara$^{\dag}$,\quad \scshape Jianlu Zhang$^*$}

\begin{abstract}
The restricted planar four body problem describes the motion of a massless body
under the Newtonian gravitational force of other three bodies (the primaries), of which the motion gives us general solutions of the three body problem.

A trajectory is called {\it oscillatory} if it goes arbitrarily faraway but returns infinitely
many times to the same bounded region. We prove the existence of such type of trajectories provided the primaries evolve in suitable periodic orbits. 
\end{abstract}
\vspace{20pt}

\tableofcontents

\section{Introduction}\label{s1}
\vspace{20pt}

The {\it Restricted Planar Four Body Problem} (RP4BP from now on) models the motion of
a body of zero mass under the Newtonian gravitational force of three other bodies (the {\it primaries}),
which evolve in general planar  three body motion. 
Usually the RP4BP can be interpreted as a {\it Sun-Jupiter-Planet-Asteroid} (S-J-P-A) system. 
We can normalize the mass of the Sun and Jupiter by $1-\mu$ and $\mu$ individually, with $\mu\in(0,1/2]$. 
For us $\mu$ is a fixed positive parameter, so in the following paragraph we do not write this dependence explicitly. 
When the mass of the Planet, denoted by $\dt$, is suitably small, 
% than $\mu$, 
we will  find certain  periodic orbits of the S-J-P subsystem (see Theorem \ref{rot-continue}). 
In Cartesian coordinates, if we denote the position of the primaries by $x_{S}$, $x_{J}$ and $x_{P}\in\R^{2}$ respectively, the periodic orbit will satisfy
\[
\big(x_{S}(t+T_{\dt}),x_{J}(t+T_{\dt}),x_{P}(t+T_{\dt})\big)=\big(x_{S}(t),x_{J}(t),x_{P}(t)\big),\quad\forall\ t\in\R
\]
with $T_{\dt}\in\R_{+}$ 
being a constant continuously depending on, and uniformly bounded as $\dt \to 0^+$. 
%\textr{Without loss of generality, $\dt<\mu$ can be priorly assumed since a perturbed setting as $
%\dt\to 0_+$ will be concerned.}

%\textr{(I mean $|T_\dt|\leq C$ for some fixed constant $C$ once $\dt\leq\dt_*\ll\mu<1$ is upper bounded by a $\dt_*$)} 
%
%\textcolor{green}{
%I can understant that is not restrictive to assume  $0\leq\dt\ll\mu<1$. But  the less conditions we %write here in the introduction the better. I keep asking, where in the paper do we need this %condition????\\}
%\textb{Tere, we don't need any conditions on $\dt,\mu$ here, but $\eps_\dt\ll1$ is involved in the %proof of periodic orbits and $\dt\to 0_+$ is needed to show the stability of our periodic orbits. So I %think we'd better fixed $\mu$ first and let $\dt<\mu$.}

%\textb{If you agree, I can move the red sentence to the place between Theorem 1.1 and Remark 1.2}
Moreover, the periodic orbit is given, in first order in $\dt$, by circular orbits  of the  Sun $x_S$ and Jupiter $x_J$ of radious $\mu$ and $1-\mu$ respectively, and a {nearly circular orbit for the planet $x_P$ of size ${1}/{\eps_\dt^2}$, where $\eps_\dt$ is a small parameter uniformly bounded as $\dt\to 0^+$, so that the planet is far away from the Sun and Jupiter.  See Figure  \ref{fig-4bp}.

\begin{figure}
\begin{center}
\includegraphics[width=12cm]{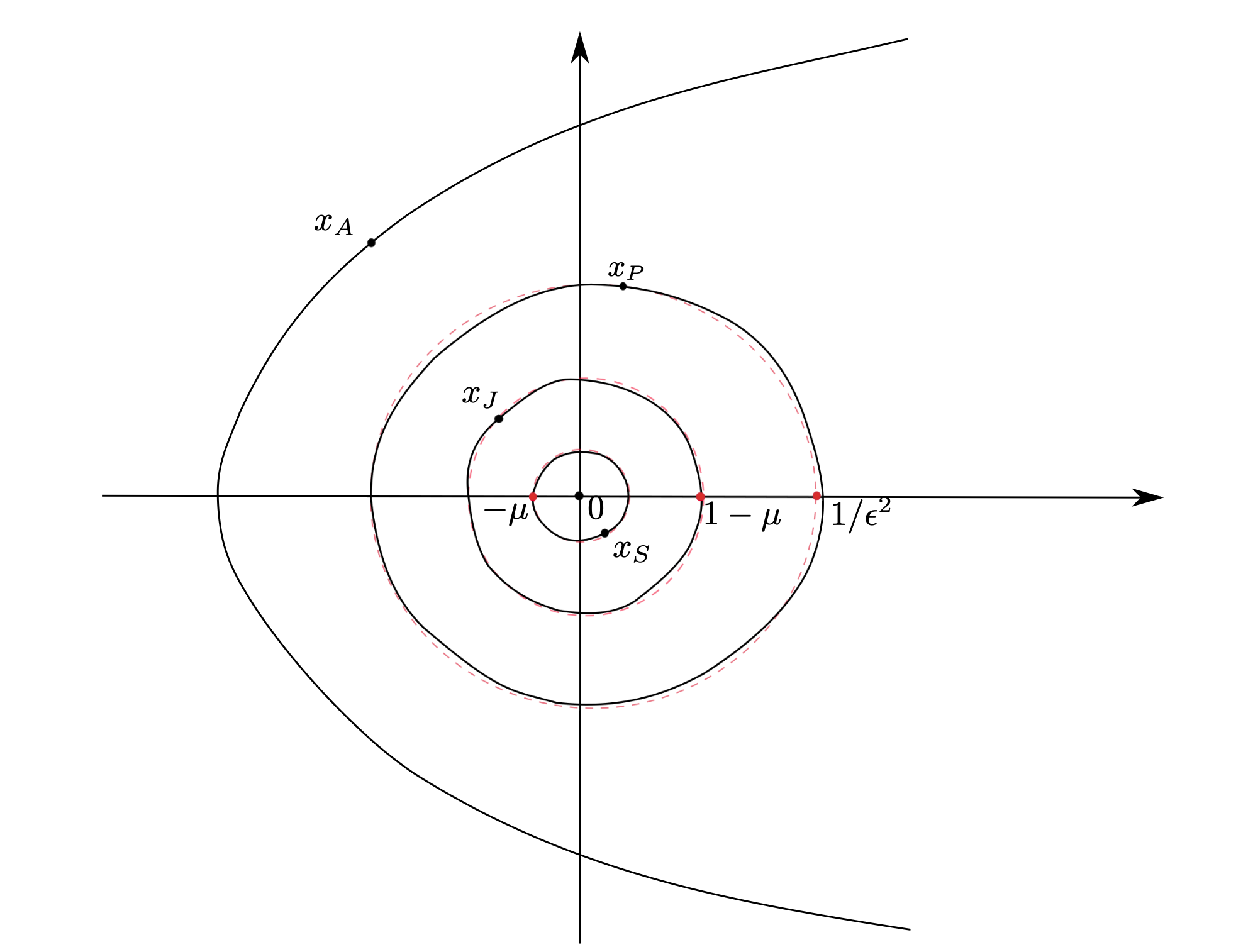}
\caption{The red dash circles describe the position of S-J-P as $\dt=0$. Take $0<\eps_{\dt}\ll1$ and make $\dt\ll\eps_{\dt}$ smaller, the comet type periodic motion should be a deformation of the red circles. The massless Asteroid performs a parabolic motion, of which dist$(x_A,0)\gtrsim \cO(\eps_{\dt}^{-2})$ is needed, to avoid a collision between A and P.}
\label{fig-4bp}
\end{center}
\end{figure}

%\begin{figure}[htb]
%\begin{center}
%\def\svgwidth{11cm}
%
%\caption{The red dash circles describe the position of S-J-P as $\dt=0$. Take $0<\eps_{\dt}\ll1$ and make $\dt\ll\eps_{\dt}$ smaller, the comet type periodic motion should be a deformation of the red circles. The massless Asteroid performs a parabolic motion, of which dist$(x_A,0)\gtrsim \cO(\eps_{\dt}^{-2})$ is needed, to avoid a collision between A and P.}
%\label{fig-4bp}
%\end{center}
%\end{figure}
%\textcolor{green}{
%is it possible to make a picture only with the periodic orbit?, or at least put the picture here in the introduction
%}
%
%\marginpar{uniformly bounded?? means lower and upper bounds independent of $\delta$?. Clarify}
%
When the primaries move in this periodic orbit,   the motion of the asteroid can be described by the following Hamiltonian:
\be\label{ham-4bp}
H_A(x_A,y_A, t)=\frac12|y_A|^2-V_A(x_{A}, t)
\ee
where $x_{A},\ y_{A}\in\R^{2}$ and
\[
V_A(x_{A}, t)=\frac{1-\mu}{|x_A-x_S(t)|}+\frac{\mu}{|x_A-x_J(t)|}+\frac{\dt}{|x_A-x_P(t)|}.
\]

The purpose of this paper is to show
%, for $0<\dt\ll\mu$, 
the existence of some particular orbits of this Hamiltonian system: the {\it oscillatory orbits}. 
This kind of orbits can leave every bounded region but return infinitely many times to the same bounded region. 
If $\dt=0$, the system becomes the {\it Restricted Planar Circular Three Body Problem} (RPC3BP), of which the oscillatory orbits has been found in \cite{GMS}. 
Nevertheless, contrarily to what happens in Arnold diffusion, as oscillatory orbits require infinite time, their existence can not be obtained just using the regular dependence on parameters of the Hamiltonian.
%\textcolor{blue}{
The mechanism used in this paper follows the lines of \cite{GMSS}. The 
main idea to obtain these orbits is to study the so-called manifold of infinity, which, in suitable coordinates,  turns out to be an invariant manifold with stable and unstable manifolds which intersect transversally. 
This intersection will allow us to define some  {\it scattering map} and study the associated recurrence of trajectories. 
The whole approach is well developed in a series of papers, \cite{DLS1, DLS, DLS2}, in the study of Arnold diffusion in nearly integrable Hamiltonian systems. 
%}\\

In the current paper for fixed $\mu\in(0,1/2]$ and 
$0<\dt\ll
%\mu
1$
sufficiently small, we combine the acquisition of periodic orbits for the 3BP with the previously introduced scattering map for the RP4BP, and obtain the oscillatory orbits for system (\ref{ham-4bp}). 
On both parts we work in a nearly  integrable setting and use perturbative methods. 
On one side, we can get the  desired periodic orbits for the 3BP, as a continuation of certain periodic orbits from the RPC3BP if $\dt$ is small enough. 
Since there is no restriction on the $\mu$ value, the periodic orbits we found are always of {\it comet-type}, i.e. the relative distance between the Planet and the Sun-Jupiter couple is large.
On the other side, taking the  previous periodic orbits into (\ref{ham-4bp}) we get a system which is  a $\mathcal{O}(\delta)$ time-periodic  perturbation of the RPC3BP system, where the transversality of the stable and unstable manifolds of the infinity manifold has been proved in \cite{GMS}. 
This allows us to compute the perturbed scattering map which will be nearly integrable.
Therefore, we can apply the twist  theorem   to find certain invariant sets acting as a skeleton that oscillatory orbits will follow.\\

To prove  the existence of ``comet-type'' periodic orbits (named by \cite{MHO}) for the 3BP rigorously and obtain quantitative estimates for them, we use a a matured continuation method inherited from the RPC3BP.
%As far as we know, there are no previous results which give the existence of ``comet-type'' periodic %orbits for the 3BP rigorously, by using a matured continuation method inherited from the RPC3BP. 
Although other types of periodic orbits have been already found, e.g. the famous Figure-8 orbits \cite{CM}, technically that demands a equi-mass setting which can not be guaranteed in our case. Moreover,  the obtained  periodic orbits for the 3BP have a ``natural limit''  for $\dt\to 0^+$,  and this makes system (\ref{ham-4bp}) to be a $\cO(\dt)$- pertubation of the RPC3BP.
%}
\\

%\textr{
Another fact we want to claim is the continuation method from the RPC3BP ($\dt=0$) to the 3BP ($\dt >0$) is rather {\it robust}. Besides the comet-type periodic orbits, we can also find the {\it second type} elliptic  periodic orbits, or quasi-periodic orbits with irrational frequencies. These orbits will give us totally different RP4BP systems, of which the oscillatory orbits could still be found, by more complicated analysis. All the evidence shows the abundance of the oscillatory orbits in the phase space. Moreover, extract new mechanisms of such orbits from these systems would be rather meaningful to this topic.
%}

\subsection{The abundance of the oscillatory orbits in the 3BP} \label{s1s1}For the 3BP (either restricted
or non restricted, planar or spatial), singular solutions which correspond to the collision exist for finitely long time. As early as Siegel's times \cite{Si}, people surmise that the {\it collision orbits} should be dense in suitably region of the phase space. This is the well known {\it Siegel's Conjecture} and was formalized by Alexseev in 1970's \cite{A}. In a recent work \cite{GKZ}, we gave an estimate of the asymptotic density of the collision orbits for the RPC3BP, which indicated the collision orbits should be numerically dense in the phase space.\\

Beyond the collision orbits, all the other solutions of the 3BP are well defined for $t\in\R$. So an important question is to study the final motions of these regular orbits. This work was initiated by Chazy in 1922 \cite{Cha}, when he gave a complete classification of the possible final motions (see \cite{A} for more details). Of all his classifications, the oscillatory motion is definitely the most erratic type, which can be formalized by the following:
\[
\cO\cS^\pm\ \text{(oscillatory): }\limsup_{t\rightarrow\pm\infty}\|x\|=+\infty,\quad\liminf_{t\rightarrow\pm\infty}\|x\|<+\infty.
\]
Only until 1960 this kind of motion was firstly discovered by Sitnikov \cite{S}, in a restricted spatial model. After that, Moser gave a different proof for the  Sitnikov's model which strongly influenced the subsequent results in the area (see \cite{Mo}). 
Following Moser's idea, the works \cite{GMS,LS,LS2,Moeck,X} obtained  oscillatory motions in other generalized settings.
\\

Thanks to all these efforts, now we have a comparably clear understanding on the mechanisms of the oscillatory motion, but it's still too faraway to figure out the portion of this kind of orbits in the whole phase space. The famous {\it Kolmogrov's Conjectured} guesses that {the Lebesgue measure of the set of the oscillatory orbits should be zero} \cite{A}. Nonetheless, there is evidence in the recent work \cite{GK}, which showed that the Hausdorff dimension of the set of oscillatory motions for the Sitnikov
example (and the RPC3BP) could reach maximal for a Baire's generic subset of an open set of parameters (the eccentricity of the primaries in the Sitnikov's example and the mass ratio in the RPC3BP).\\

\subsection{Arnold diffusion in the N Body Problem ($N\geq3$)}\label{s1s2}
 For  a  nearly integrable system in action-angle coordinates
\[
H(\phi,I)=h(I)+\eps H_1(\phi, I), \quad \phi\in\T^n,\ I\in\cB\subset\R^n, \ \eps\ll1,
\]
the {\it Arnold diffusion problem } analyzes the drastic
changes that the action variables $I$ can undergo.
 Due to the restriction of the dimension and the existence of KAM tori, this kind of phenomenon can only be found for $n\geq 3$. 
Recent works, \cite{BKZ, CY1, CY2, C1,C2, DLS1, DLS, GM,KZ,Tr} among them, have proved the existence of Arnold diffusion  for typical nearly integrable systems, by using geometric and variational methods.
 
For the  NBP, one can expect to prove Arnold diffusion in certain regions of the phase space, once the nearly integrable structure  is established. One quantity that can be studied in several cases is the angular momentum of the diffusion orbits, to see that it  should make big changes in a rather long time. 
As far as we know, the first paper dealing with Arnold diffusion in Celestial Mechanics is \cite{Moe}, where the author concerned a five body model. 
%As $N=3$ is the minimal dimension which admits diffusion behavior, seeking for diffusion orbits for %this case earned more attention in the last years. 
In \cite{DGR}, the authors analyze unstable behavior for the three body problem close to the Lagrangian point $L_1$. 
In the recent work \cite{DKRS}, the authors proved the existence of Arnold diffusion for the {\it Restricted Planar Elliptic Three Body Problem} (RPE3BP) with exponentially small mass $\mu$ of the Jupiter. Let us stress here that the RPE3BP has the minimum required dimension of all the models permitting Arnold diffusion.

For the  RP4BP, \cite{Xu} showed a mechanism for the existence of diffusion orbits, but the proof assumed the transversality of  certain invariant manifolds which has been only checked numerically.
More numercial and analytical evidence on Arnold diffusion of RPE3BP and RSC3BP can be found in \cite{CGL,DGR2,FGKR,X2} and some  quantitative estimates  of  Arnold Diffusion and stochastic behavior in the Three-Body Problem is given in \cite{CG}.
Even if in this paper we do not deal with diffusion orbits, both the existence of diffusion or the existence of oscillatory orbits, share a common setting, to establish the transversal intersection of some stable and unstable manifolds of a normally hyperbolic (or parabolic) invariant manifold and then study the associated scattering map. For this reason we think that in the present example one can try to proof the existence of diffusing orbits in a future work. See Remark \ref{diff-rmk}.

%\textr{Please add Zapinski's reference, I have added the rest, please check}
%As the first step, the acquisition of oscillatory orbits is helpful to understand the local transition chain, which is crucial in constructing the diffusion orbits. This point has been aware from \cite{DKRS}. Besides, to get diffusion orbits for general value of masses is meaningful to show the genericity of such a phenomenon.\\

\subsection{Main result}\label{s1s3} 
Now we obtain our main result as the following:

\begin{thm}\label{main-thm}
Fix any value of $\mu\in(0,1/2]$. Then,  there exists $0<\dt_0 \ll1$, such that for any $\dt\in[0,\dt_0]$ we have:
\begin{itemize}
\item
The 3BP of S-J-P has  a periodic orbit
$(x_S (t), x_J(t), x_P(t))=(x_S^{\dt} (t), x_J^{\dt}(t), x_P^{\dt} (t)) $ of period $T_{\dt}$:
\[
T_{\dt}=2\pi q\Big(1+\cO(\sqrt\dt)\Big)
\]
and $q$ is an integer independent of $\dt$.
\item
The RP4BP given by the Hamiltonian system of Hamiltonian (\ref{ham-4bp}), has  forward oscillatory orbits $(x_A(t),y_A(t))$. 
Namely, they satisfy:
\[
\limsup_{t\rightarrow+\infty}\|x_A(t)\|=+\infty,\quad\liminf_{t\rightarrow+\infty}\|x_A(t)\|<+\infty.
\]
\end{itemize}
\end{thm}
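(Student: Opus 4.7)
The proof has two layers: produce a $T_\dt$-periodic configuration of S--J--P to substitute into (\ref{ham-4bp}), and then run the scattering-map / twist-map machinery of \cite{GMS,GMSS} on the resulting $\cO(\dt)$ perturbation of the RPC3BP. For the first bullet I would invoke Theorem \ref{rot-continue}. The idea is continuation from $\dt=0$: Sun and Jupiter perform their circular two-body motion of period $2\pi$, while far from them the massless Planet sees to leading order a Kepler problem around the centre of mass. Choosing a Keplerian orbit of the Planet of semi-major axis $\sim\eps_\dt^{-2}$ in $q{:}1$ mean-motion resonance with S--J furnishes an unperturbed $2\pi q$-periodic solution. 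For $\dt>0$ the Planet becomes an active body and the equations are a regular $\cO(\dt)$ perturbation on the relevant compact piece of phase space, but the slow angle between Planet and S--J is now resonant. One step of averaging yields a bifurcation equation of the form $F(\phi_0,\tau)=\cO(\sqrt\dt)$ for the resonant phase $\phi_0$ and the period correction $\tau$, whose non-degenerate roots give, via the implicit function theorem, a periodic orbit with $T_\dt=2\pi q(1+\cO(\sqrt\dt))$.

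\textbf{Reduction to a perturbation of the RPC3BP.} Plugging $(x_S^\dt,x_J^\dt,x_P^\dt)$ into (\ref{ham-4bp}) yields a $T_\dt$-periodic Hamiltonian $H_A(x_A,y_A,t)$. At $\dt=0$ the $x_P$-term disappears (its coefficient is $\dt$) and $(x_S^0,x_J^0)$ is the exact circular motion of the RPC3BP, so $H_A$ reduces to the RPC3BP Hamiltonian viewed on the lifted period $2\pi q$. For $0<\dt\ll 1$ and asteroid positions relevant to oscillatory dynamics (large $|x_A|$, kept away from $x_P^\dt$ whose trajectory lives at scale $\eps_\dt^{-2}$), the difference $H_A^\dt-H_A^0$ and all derivatives I shall need are $\cO(\dt)$ uniformly.

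\textbf{Scattering map and oscillatory orbits.} In McGehee-type coordinates, infinity is blown up into a two-dimensional invariant manifold $\Lb_\infty$ parameterised by the angle--action pair of the Kepler problem at infinity. By \cite{GMS}, for $\dt=0$ this manifold is (topologically) normally parabolic with stable and unstable manifolds $W^{s,u}(\Lb_\infty)$ that intersect transversally along a homoclinic channel, and the associated scattering map $\cS_0$ is an exact-symplectic twist map of $\Lb_\infty$. Because $\Lb_\infty$ is determined purely by the dynamics at $|x_A|\to\infty$ it persists unchanged for $\dt>0$; a Melnikov-type computation along the $\dt=0$ parabolic orbits then shows that the transverse intersection persists for $\dt$ small and that the perturbed scattering map $\cS_\dt$, $T_\dt$-periodic in time, is $\cO(\dt)$ close to $\cS_0$ in $C^r$. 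Applying the Moser twist theorem to $\cS_\dt$ produces KAM curves and Aubry--Mather invariant sets on $\Lb_\infty$ whose radial action covers an interval, and a standard shadowing argument \cite{DLS1,DLS} produces true trajectories of $H_A$ following prescribed pseudo-orbits of $\cS_\dt$. Selecting pseudo-orbits that alternate arbitrarily many excursions near $\Lb_\infty$ (large radial action) with returns to a fixed region (smaller action) delivers the forward oscillatory orbit.

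\textbf{Main obstacle.} The delicate step is not the twist / shadowing argument, for which the abstract technology is by now standard, but the control of $W^{s,u}(\Lb_\infty)$ at \emph{parabolic} infinity under a time-periodic $\cO(\dt)$ forcing. Standard normally-hyperbolic persistence does not apply, and the $\dt=0$ splitting is a priori non-analytic, so one must verify that the Melnikov potential produced by the radial pulsations of $x_S^\dt,x_J^\dt$ and by the distant $x_P^\dt$ is a genuine $\cO(\dt)$ correction to the splitting computed in \cite{GMS}, not something exponentially smaller, and that $\cS_\dt$ consequently preserves the twist property of $\cS_0$. Carrying out this Melnikov calculation along parabolic trajectories of infinite duration, together with the uniformity estimates needed in Part 1, is where I expect the bulk of the technical work to lie.
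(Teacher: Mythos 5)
Your overall architecture matches the paper's: find periodic orbits of the S--J--P 3BP by continuation, substitute them into \eqref{ham-4bp} to get a periodic $\cO(\dt)$-perturbation of the RPC3BP, then use the parabolic infinity manifold, its homoclinic channels and scattering maps, the twist theorem, and a parabolic $\lambda$-Lemma / shadowing argument to build the oscillatory orbit. Where you diverge is in the mechanism producing the periodic orbit and, to a lesser extent, in what you identify as the main technical obstacle.

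For the first bullet, the paper does \emph{not} run an averaging/bifurcation argument at a $q{:}1$ mean-motion resonance with a slow resonant angle $\phi_0$. Instead, after a chain of rescalings ($p_2=\dt v_2$, $r=1+\sqrt\dt\,\wt r$, $R=\sqrt\dt\,\wt R$, then a second rescaling by a free parameter $\eps$), the Hamiltonian $H_{res}^{\dt,\eps}$ in \eqref{eq:ham-res-wh} has a truncated part $\ol H_{res}^0$ which is a direct sum of a harmonic rotator and a Kepler-like system; the unperturbed periodic orbit is the circular orbit $\rho=1$, $G=\pm1$ times the fixed point of the rotator. Proposition \ref{prop-conti-peri} then continues the corresponding fixed point of the $\phi$-section Poincar\'e map by a Brouwer fixed-point / Lipschitz contraction estimate (the nondegenerate factor $\|(I-D\ol\cP_0^\pm)^{-1}\|\le 2/\eps^3$ is what makes the perturbation $\cO(\mu\eps^7+\sqrt{\dt/\eps})$ tractable). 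This produces a periodic orbit only in the rotating variables. The crucial extra step, which your proposal omits entirely, is the \emph{rationality condition} \eqref{ration-cond}: the angle $\theta$ is then examined, and the free parameter $\eps_\dt^\pm$ is tuned so that the ratio $\mathfrak R^\pm(\dt,\eps)$ hits the rational $(q\mp 1)/q$. Only then is $\gamma_\dt^\pm$ actually periodic in Cartesian coordinates, with period $T_\dt^\pm=2\pi q(1+\cO(\sqrt{\dt/\eps_\dt^\pm}))$. Note also that the $\sqrt\dt$ in your bifurcation equation is asserted but not explained, whereas in the paper it is traced explicitly to the rescaling $r=1+\sqrt\dt\,\wt r$, $R=\sqrt\dt\,\wt R$: it measures the radial pulsation of the S--J pair, not a resonance correction. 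So your averaging route is a genuinely different (and, as stated, incomplete) mechanism: you would still need to argue that whatever bifurcation equation you write down has a nondegenerate zero \emph{and} that the resulting orbit closes up in the non-rotating frame — the latter is exactly what the paper's $\eps$-tuning handles.

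For the second bullet, your outline is essentially the paper's, with one mischaracterization. The transversality for $\dt=0$ is quoted from \cite{GMS} and persists on compact $\Psi$-ranges for $\dt$ small by regular perturbation; the Poincar\'e function $L^0$ in \eqref{poincare-fun} is used to compute $\cS_0^\pm$, not to re-establish persistence. The paper's real caveat (Remark \ref{rmk-exp-small}) is that, because the asteroid must stay outside the planet's orbit (radius $\sim\eps_\dt^{-2}$), the relevant angular momentum satisfies $\Psi\gtrsim \eps_\dt^{-1}$, where the unperturbed splitting is only $\cO(\exp(-\Psi^3))\sim\cO(\exp(-\eps_\dt^{-3}))$. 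Thus the genuine quantitative issue is that $\dt$ should be exponentially small relative to $\eps_\dt$, not whether the $\dt$-correction is ``genuinely $\cO(\dt)$ rather than exponentially smaller'' — if anything it is the opposite worry, that the $\dt$-perturbation might overwhelm the tiny unperturbed splitting. This point is worth getting right if you intend to carry out the estimates.
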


As happens in \cite{GMSS} the same  mechanism can also be used to  construct backward oscillatory orbits (for $t\rightarrow-\infty$)
but not  to show the existence of bilateral oscillatory orbits. To get these orbits requires the construction of a horseshoe and this is beyond the goals of this paper.
%As we know, the ideas developed by Moser rely on the construction of a horseshoe of which the associated dynamic has %symbols with arbitrarily numbers close to infinity. Only thus we can combine all possible past and future behavior. Here, we use %a different but simpler technique which allows to generalize to higher dimensions: shadowing certain invariant objects. Yet %these techniques can only be applied to one time direction, either future or past, but not to both of them at the same time. %Same situation is also encountered in the past article \cite{GMSS} of the first author.\\
Notice that $\dt=0$ is allowable and (\ref{ham-4bp}) will degenerate to the RPC3BP, on which the forward and backward oscillatory orbits have been found in \cite{GMS}. That's why $\delta=0$ is included in our result. \\

Let us stress here that to show the existence of ``comet-type'' periodic orbits or quasi periodic orbits for the general 3BP ($\dt\lesssim \cO(\mu)$) is still unknown. 
Current techniques highly rely on the nearly integrable structure. 
This is one of the reasons  why we need $\dt_0$ be sufficiently small. 
The condition $\mu>0$ is also natural and
without loss of generality, $\dt_0<\mu$ can be  assumed since we are working in  a perturbed setting 
$\dt\to 0^+$.
%
% \textr{without loss of generality we can assume $\dt_0<\mu$.} 
If so, $\mu\rightarrow 0$ will compel $\dt_0\rightarrow 0$ and (\ref{ham-4bp}) degenerate to Two Body Problem, which is naturally integrable. 
The oscillatory motion couldn't happen for this case.

\begin{rmk}\label{diff-rmk}
Our system (\ref{ham-4bp}) shares with the RPE3BP that it is a $\cO(\dt)$-periodic perturbation of the RPC3BP and therefore is a two and a half degrees of freedom system. 
In the work \cite{DKRS}, the authors showed that, after checking  some nondegeneracy conditions for the scattering map (given in Section \ref{s3}), they could obtain diffusion orbits.
Precisely, there are two different scattering maps associated with two different homoclinic channels of the manifold of infinity,  
%for the RP4BP, 
each of which is an area preserving twist map on a cylinder. 
The nondegeneracy claims that these two scattering maps do not have common invariant curves. Then,  combining the two scattering maps, in \cite{DKRS} orbits with a large drift in the angular momentum where obtained. 
%The main idea used in that paper is that, as the maps $\cS^\pm$ did not have common invariant curves, we could obtain transition chains with a large drift in angular momentum $\Psi$. 

We think that these ideas can be also used in the RP4BP, if the mentioned non-degeneracy condition can be checked. But this requires some non-trivial computations and we leave it for future work. 
%This idea has been used in \cite{DKRS}, to prove the diffusion behavior for RPE3BP with $\mu\ll1$. }
%an intuitive application of Theorem \ref{main-thm} is to obtain diffusion orbits with large drift in the angular momentum. Due to the experience from 
%Nevertheless, it is not straightforward to verify it in the present setting. That is the non-degeneracy assumption mentioned in subsection \ref{s1s2}
%We will explain the required non-degeneracy condition in the Remark \ref{rmk-diff-nondeg}.
\end{rmk}

\subsection{Scheme of the proof}\label{s1s4}

In this section we will give a scheme which applies for both Theorem \ref{main-thm} and Remark \ref{diff-rmk}.  More detail will be supplied in Section \ref{s4}.\\
%
%\marginpar{we need to change all this, is copied from \cite{GMSS}}
%
%\textr{
Let's first review the idea of constructing oscillatory orbits for the RPC3BP in \cite{LS, GMS}.
As the RPC3BP has a first integral, the {\it Jacobi Constant}, when written in rotationg coordinates it becomes an autonomous Hamiltonian System of two degrees of freedom. Fixing the energy level (that at infinity coincides with the angular momentum) and taking a global surface of section it can be
reduced to a two dimensional Poincar\'e map of which the `infinity'  $\{|q|=+\infty,\  \dot q = 0\}$ is a parabolic fixed point. 

Just like in the hyperbolic case, it inherits stable (resp. unstable) invariant manifolds which intersect transversely as proved in \cite{GMS} for any value of the mass parameter $0<\mu\le \frac12$. 
This intersection gives rise to some  symbolic dynamics  as Moser proved in \cite{Mo} for the Sitnikov problem and  Sim\'{o} and Llibre in \cite{LS} for the RPC3BP, which supplies us orbits traveling close to the invariant manifolds and the $\liminf$ of the distances to the fixed point, which corresponds to the infinity in the original coordinates,  is zero.
%}
\\

%\textr{Instead of proving the existence of certain symbol dynamics as Moser did in \cite{Mo}, \cite{GMS} uses a method borrowed from the construction of transition chain of the Arnold diffusion problem \cite{Ar}: They found a sequence of points belonging to a compact region of the cylinder of infinity, which are
%connected by transversal heteroclinic orbits (an infinite transition chain). They successfully obtained orbits which shadow the whole chain, i.e.  the oscillatory orbits.}\\
%\textr{
Notice there are two crucial ingredients in previous strategy: the transversality of the parabolic invariant skeleton and the symbolic dynamics. To apply this strategy to system (\ref{ham-4bp}), we have to achieve both two or find reasonable substitutes.
% }
\\

%\textr{
(I). 
For system (\ref{ham-4bp}) the phase space is  of dimension five. 
Therefore the associated Poincar\'e map becomes four dimensional and infinity becomes a two dimensional cylinder with one angular variable and an ``action variable'', the angular momentum of the mass-less body (see Section \ref{s3}). 
Although this cylinder is still {\it normally parabolic} and has invariant manifolds, 
we have to additionally show this cylinder is {\it homogenous}, i.e. it consists of fixed points. 
This is done in Theorem \ref{rot-continue}, by using a continuation approach with $\dt\ll1$. 
Besides, as a perturbation of the RPC3BP, if we remain in a compact subset, the invariant manifolds of this cylinder still intersect transversaly for $\dt\ll1$.
%}
\\

%\textr{
(II). 
Because of the increase of dimension we 
%, we have technical difficulty to get a symbolic dynamics. Instead,  we 
use a method borrowed from the construction of transition chains of the Arnold diffusion problem \cite{Ar} and proposed in \cite{GMSS} to obtain the oscillatory orbits. 
Precisely, we find a sequence of fixed points belonging to a compact region of the cylinder of infinity which are connected by  heteroclinic orbits. 
These orbits  form a so called {\it  infinite transition chain}, and, if we successfully obtain an orbit shadowing the whole chain, then we get an oscillatory orbit.
To find the transition chain, we get a nearly integrable scattering map 
%Recall that the infinity point of the RPC3BP is not hyperbolic but parabolic. \cite{GMS}, as
%\cite{Mo} for the Sitnikov's problem, shows that we can indeed prove the existence of the invariant
%manifolds of the parabolic point, just like what we can do to the hyperbolic case. Moreover, the intersect transversally and similar $\lambda-$Lemma for parabolic points can be proved as well, which together implies the existence of symbolic dynamics.\\
in subsection \ref{s3s2} and apply the KAM theorem to it. Any KAM torus will supply the uniform compactness, so we just need to choose the sequence on the torus.
%} 
\\

%Then due to the continuity of the initial condition, we can get either the backward, or the forward shadowing orbits of the sequence, but never be bidirectional. That's the price we have to pay for the simplicity of the construction, comparing to the horseshoe. Certainly a horseshoe gives much more information and imply a plethora of different properties of motions, e.g. the bidirectional oscillatory orbits, the estimate of Hausdorff dimension for the oscillatory orbits etc. However, that demands more quantitative analysis and preciser computations which far beyond the content of the current article. We will keep exploring the horseshoe structure in the future works.\\

%\textr{
Recall that the vertical direction of the cylinder of infinity can be parameterized by the angular momentum of the orbits. 
So another inspiring question is to find a suitable transition chain of periodic orbits  on the cylinder of infinity with large change of the angular momentum. 
Shadowing this chain the diffusion orbits can be constructed (see Remark \ref{diff-rmk}). 
It seems to be a totally opposite question to the construction of the oscillatory orbits, and strongly relies on the dynamics of the scattering map. Essentially, as in our problem the scattering map has invariant  KAM tori, these tori are an  obstruction 
to obtain orbits with big increase of the angular momentum by only one scattering map.
We have to use two scatering maps to build a sequence of points which breaks the obstruction of the KAM tori and makes persistently upward (resp. downward) movement \cite{DLS,DLS2,DKRS}. 
To check this mechanism  also requires further quantitative analysis and necessary refinement of the model (\ref{ham-4bp}) in our future works.\\

\subsection{Organization of the article}\label{s1s5} 

The paper is organized as follows. First in Section \ref{s2} we prove the first item of Theorem \ref{main-thm}: we prove the existence of periodic orbits for the 3BP, and show that they  are  continuation from the  ones of the RPC3BP. 
In Section \ref{s3} we prove the second  item of Theorem \ref{main-thm}: we consider the three  primaries moving in the obtained periodic orbit to get the designated system (\ref{ham-4bp}) for the RP4BP and  prove the existence of oscillatory orbits for this system.
First in section  \ref{setup} we write the Hamiltonian giving the RP4BP in suitable coordinates and  analyze the existence and transversal intersection of the  stable and unstable invariant manifolds of the ``manifold of infinity''.
% for'' (\ref{ham-4bp}). 
%We prove its existence and regularity and we analyze their transversal intersections. 
Section \ref{s3s1} is devoted to  recall the known facts for the case $\dt=0$, which becomes the $RPC3BP$. As in this case the needed transversality properties are known, classical perturbation theory allows us to construct the needed transition chain of periodic orbits through the study of the scattering map in section \ref{s3s2}. Finally, in Section \ref{s4}, we state the shadowing mechanism which gives  the oscillatory orbits, which technically relies on a $\lambda-$lemma applied to the invariant manifolds of the normally parabolic cylinder of infinity given in \cite{GMSS}. 
For readability we moved parts of  some  coordinate transformations to the Appendix.

\vspace{20pt}

\noindent{\bf Acknowledgement.} T.S. was partially supported by the MINECO-FEDER Grant  PGC2018-098676-B-100 (AEI/FEDER/UE), the Catalan Grant 2017SGR1049, and the Catalan Institution for Research and Advanced Studies via an ICREA Academia Prize 2019.  
J.Z.  is supported by the National Natural Science Foundation of China (Grant No. 11901560). 
This material is based upon work supported by the National Science Foundation under Grant No. DMS-1440140 while the authors were in residence at the Mathematical Sciences Research Institute in Berkeley, California, during the Fall 2018 semester.

%Some crucial part of the proof of $\lambda-$Lemma and some necessary supplement are deferred to Appendix \ref{A}, \ref{B} for conciseness of the text.\vspace{20pt}

%\section{Continuation of periodic orbits from Kepler to RPC3BP}
%\vspace{20pt}
%
%
%Let's start from the RPC3BP system
%\be
%H(x,y)=\frac12|y|^2-x^tJy-\frac{1-\mu}{|x+(\mu,0)|}-\frac{\mu}{|x-(1-\mu,0)|},\quad(x,y)\in\R^4.
%\ee
%For $|x|\gg1$, we can see
%\[
%H(x,y)=\frac12|y|^2-x^tJy-\frac{1}{|x|}+O(\frac{\mu}{|x|^3}),
%\]
%so the following rescaling is available:
%\[
% x=\frac{\wh x}{\eps^2},\quad  y=\eps \wh y,\quad \eps\ll1
%\]
%then the rescaled system becomes
%\be
%\wh H(\wh x, \wh y)=-\wh x^t J\wh y+\eps^3\big(\frac12 |\wh y|^2-\frac{1}{|\wh x|}\big)+O(\eps^5).
%\ee
%For brevity we remove the `hat' and change to the polar coordinate:
%\[
%H(r,\theta,R,\Theta)=-\Theta+\eps^3\big[\frac12(R^2+\frac{\Theta^2}{r^2})-\frac1r\big]+O(\eps^5).
%\]
%By removing the $O(\eps^5)$ term the truncated system is integrable, and we can find a pair of circular periodic orbits by $\Theta=\pm 1$, $R=0$, $r=1$. Linearizing the truncated system along the periodic orbits we can solve the eigenvalues by $1,1,e^{i\eps^3}, e^{-i\eps^3}$. Therefore, if we add the $O(\eps^5)$ term back we succeeded to find the continued periodic orbits for the RPC3BP system:
%\begin{thm}[Meyer, Hall\ \&\ Offin \cite{MHO}]\label{near-infity-period-thm}
%There exists two one-parameter families of nearly circular periodic orbits of the RPC3BP for all the values of $\mu\in(0,1)$, which tends to infinity.
%\end{thm}
%\vspace{20pt}

\section{Periodic orbits for the 3BP}\label{s2}
\vspace{20pt}
%
%\marginpar{symmetric???}
%
In this section  we prove the first item of Theorem \ref{main-thm}: we will see how to find some periodic solutions for the S-J-P model. 
Basically these periodic solutions can be considered as the continuation from the RPC3BP to the 3BP system. 
As far as we know, \cite{H} first proposed a suitable coordinate of which the 3BP can be translated into a Lagrangian variational problem with three degrees of freedom. To get our desired periodic orbits, we adapt the language of \cite{H} to the Hamiltonian setting.

\subsection{Symplectic transformations for 3BP}\label{s2s1}

Let's start with the following 
$6-$degrees of freedom Hamiltonian system 
\be\label{3bp-cart}
\mathcal{H}(x_P,x_J,x_S,y_P,y_J,y_S)&=&\sum_{i=P,J,S}\frac{|y_i|^2}{2m_i}-\sum_{i\neq j}\frac{\cG m_im_j}{|x_i-x_j|}\nonumber\\
&=&\frac1{2\dt}|y_P|^2-\dt\Big[\frac{1-\mu}{|x_P-x_S|}+\frac{\mu}{|x_P-x_J|}\Big]+\nonumber\\
& &\Big[\frac1{2(1-\mu)}|y_S|^2+\frac1{2\mu}|y_J|^2-\frac{\mu(1-\mu)}{|x_S-x_J|}\Big]
\ee 
where we take $\cG=1$, $m_S=1-\mu$, $m_J=\mu$ and $m_P=\dt$. 
Recall that there exists a bunch of first integrals we can use, i.e. 
\be\label{first-int}
\left\{
\begin{aligned}
(1-\mu)x_S+\mu x_J+\dt x_P=0,\\
y_S+ y_J+ y_P=0.
\end{aligned}
\right.
\ee 
If we transfer (\ref{3bp-cart}) to the Jacobi coordinates by the following
   \be\label{jacobi-tran}
   \Phi:\left\{
   \begin{aligned}
   Q_0&=x_S,& \\
   Q_1&=x_J-x_S,& \\
   Q_2&=x_P-(1-\mu)x_S-\mu x_J,& \\
   P_0&=y_S+ y_J+y_P,&\\
   P_1&= y_J+\mu y_{P},& \\
   P_2&=y_P,& 
   \end{aligned}
   \right.
   \ee
%   \textr{
%   \[
%   \left\{
%   \begin{aligned}
%   x_S&=Q_0,& \\
%   x_J&=Q_1+Q_0,& \\
%   x_P&=Q_2+Q_0+\mu Q_1,& \\
%   y_S&=-P_1+(\mu-1)P_2,&\\
%   y_J&={P_1-\mu P_2},& \\
%   y_P&=P_2.& 
%   \end{aligned}
%   \right.
%   \]
%   }
   the Hamiltonian becomes independent of $Q_0$, therefore $P_0$ is a first integral.
   From now on we choose $P_0=0$ as (\ref{first-int}) shows and we obtain the $4$-degrees of freedom Hamiltonian:
   \be
    % \tilde 
    H(Q_1,P_1, Q_2,P_2)&=&\frac{\dt+1}{2\dt}|P_2|^2-\dt\Big[\frac{1-\mu}{|Q_2+\mu Q_1|}+\frac{\mu}{|Q_2-(1-\mu)Q_1|}\Big]\nonumber\\
     & &+\Big[\frac{|P_1|^2}{2\alpha}-\frac{\alpha}{|Q_1|}\Big]
      \ee
      with $\alpha:={\mu(1-\mu)}$. 
      For convenience, we can further write $Q_1$ in polar coordinates, namely there exists a symplectic transformation 
      \[\Phi_{pol}:
\left\{
\begin{split}
\pi_1Q_1&=r\cos\theta, \\
\pi_2Q_1&=r\sin\theta,\\
\pi_1P_1&=R\cos\theta-\frac{\Theta}{r}\sin\theta, \\
\pi_2P_1&=R\sin\theta+\frac{\Theta}{r}\cos\theta,
\end{split}
\right.
\]
 such that 
\[
dQ_1\wedge dP_1+dQ_2\wedge dP_2=dr\wedge dR+d\theta\wedge d\Theta+dQ_2\wedge dP_2,
\]
 of which the Hamiltonian becomes
\be
     H^*(r,R,\theta,\Theta,Q_2,P_2)&=&\frac{\dt+1}{2\dt}|P_2|^2-\dt\Big[\frac{1-\mu}{|Q_2+\mu (r\cos\theta,r\sin\theta)|}+\frac{\mu}{|Q_2-(1-\mu)(r\cos\theta,r\sin\theta)|}\Big]\nonumber\\
     & &+\Big[\frac{1}{2\alpha}(R^2+\frac{\Theta^2}{r^2})-\frac{\alpha}{r}\Big].\nonumber
\ee
Then we further take the following {\it Hadjidemetriou's rotating} coordinates
\be\label{rotat-3bp-tran}
(r,R,\theta,\Theta,Q_2,P_2)\xrightarrow{\Phi_{had}}(r,R,\theta,\Om,q_2,p_2)
\ee
with
      \[
     (q_2,p_2)=(e^{-i\theta}Q_2,e^{-i\theta}P_2),\quad e^{-i\theta}=\begin{pmatrix}
    \cos\theta  &\sin\theta    \\
    -\sin\theta  &  \cos\theta
\end{pmatrix}
      \]
This transformation $\Phi_{had}$ is symplectic: 
\be
dr\wedge dR+d\theta\wedge d\Theta+dQ_2\wedge dP_2&=&
%dr\wedge dR+d\theta\wedge d\Theta\nonumber\\
%& &+d(\cos\theta q_2^1-\sin\theta q_2^2)\wedge d(\cos\theta p_2^1-\sin\theta p_2^2)\nonumber\\
%& &+d(\sin\theta q_2^1+\cos\theta q_2^2)\wedge d(\sin\theta p_2^1+\cos\theta p_2^2)\nonumber\\
%&=&dr\wedge dR+d\theta\wedge d(\Theta+q_2\times p_2)+dq_2\wedge dp_2\nonumber\\
%&=&
dr\wedge dR+d\theta\wedge d\Om+dq_2\wedge dp_2.
\ee
and  we call
      \be
  \Om&=&x_P\times y_P+x_S\times y_S+x_J\times y_J\nonumber\\
  &=&Q_1\times P_1+Q_2\times P_2\nonumber\\
  &=&\Theta+Q_2\times P_2.
  \ee
to the {\it total angular momentum}. 

So we finally get an operable Hamiltonian
      \be\label{polar sys-4bp}
      H_{rot}^\dt(r,R, \theta,\Om,q_2,p_2)&=&\frac{\dt+1}{2\dt}|p_2|^2-\dt\Big[\frac{1-\mu}{|q_2+r(\mu ,0)|}+\frac{\mu}{|q_2-r(1-\mu,0)|}\Big]\nonumber \\
      &&+\Big[\frac{R^2}{2\alpha}+\frac1{2\alpha r^2}(\Om-q_2\times p_2)^2-\frac{\alpha}{r}\Big].
      \ee
% and the relationship between $\dot\theta$ and the angular momentum $\Om$ satisfies
%  \be
%  \Om&=&x_P\times y_P+x_S\times y_S+x_J\times y_J\nonumber\\
%  &=&Q_1\times P_1+Q_2\times P_2\nonumber\\
%  &=&{\alpha r^2\dot\theta}+\dt q_2\times v_2.
%  \ee
 As $H_{rot}^\delta$ does not depend on $\theta$,  $\Om$ is an first integral and
%  On the other side, 
%  \textr{
%  \[
%  \dot\theta=\frac{\Theta}{\alpha r^2}
%  \]} 
 we can restrict 
  \be
  \Om=\alpha=\mu(1-\mu).
  \ee
Now the system $H_{rot}^\dt$ is of 3 degrees of freedom.
Abusing notation we write $H_{rot}^\dt (r,R,q_2,p_2)$.

   \begin{rmk}
   Previous transformations are all explicit, therefore, once we find a periodic orbit of system $H_{rot}^\dt$, we can instantly pull it back to obtain its position in Cartesian coordinates by the following:
   \be\label{exp-comb-sym-tran}
     \left\{
   \begin{aligned}
   x_S&=-\mu r\begin{pmatrix}
       \cos\theta  \\
      \sin\theta  
\end{pmatrix}-\frac{\dt}{1+\dt}\begin{pmatrix}
    \cos\theta  &-\sin\theta    \\
    \sin\theta  &  \cos\theta
\end{pmatrix}q_2,& \\
   x_J&=(1-\mu) r\begin{pmatrix}
       \cos\theta  \\
      \sin\theta  
\end{pmatrix}-\frac{\dt}{1+\dt}\begin{pmatrix}
    \cos\theta  &-\sin\theta    \\
    \sin\theta  &  \cos\theta
\end{pmatrix}q_2,& \\
   x_P&=\frac{1}{1+\dt}\begin{pmatrix}
    \cos\theta  &-\sin\theta    \\
    \sin\theta  &  \cos\theta
\end{pmatrix}q_2.& 
   \end{aligned}
   \right.
   \ee
   Now we claim the existence of periodic orbits in the following statement:
   \end{rmk}
   \begin{thm}\label{rot-continue}
   Fix any  $0\leq\mu\leq1/2$. 
   There exist $0<\eps_0=\eps_0(\mu)\ll1$ and $\dt_0=\dt_0(\eps_0)>0$, such that for any 
   $0<\dt\leq\dt_0$, the system (\ref{polar sys-4bp}) has two periodic orbits
   $\gamma_{\dt}^\pm$ of period $T_{\dt}^\pm$, which can be expressed by 
  \[
  \gamma_{\dt}^\pm(t):=\Big\{\Big(r^\pm(t),\theta^\pm(t),R^\pm(t),\Om(t),q_2^\pm(t),p_2^\pm(t)\Big)\Big\}\subset \R\times\T\times\R^2\times\R^4
  \]
  and
  \[
  \gamma_{\dt}^\pm(t+T_{\dt}^\pm)=\gamma_{\dt}^\pm(t),\quad\forall t\in\R.
  \]
   More precisely, for any integer $q\in[\frac43\eps_0^{-3},\frac83\eps_0^{-3}]$ fixed, 
   there always exists $\eps_{\dt}^\pm\in(\eps_0/2,\eps_0)$ such that 
   \be
T_{\dt}^\pm=2\pi q\Big(1+\cO(\sqrt{\frac{\dt}{{\eps_{\dt}^\pm}}})\Big)
   \ee
   and the following estimate holds:
%   we can find a suitable couple of values $(\eps_*,\dt_*)\in[\eps_0/2,\eps_0]\times[0,\dt_0]$, such that the periodic orbit $\gamma_*$ of system $H_{rot}^\dt$ in (\ref{polar sys-4bp}) exists, which can be expressed by 
%   \[
%\gamma_*(t):=\{(q_2(t),p_2(t),r(t),\theta(t),R(t),\Om(t))\}\subset \R^4\times\R\times\T\times\R^2,
%\]
%   and satisfies
   \be\label{rot-conti-est}
 \left\{
   \begin{aligned}
   \|r^\pm(t)-1\|&\leq \cO(\frac{\dt}{\eps_{\dt}^{\pm4}}),& \\
   \|\dot\theta^\pm(t)-1\|&\leq \cO(\frac{\dt}{\eps_{\dt}^{\pm4}}),&\\
   \|R^\pm(t)\|&\leq \cO(\frac{\dt}{\eps_{\dt}^{\pm4}}),&\\
   \Om(t)&= \alpha.& \\
   %\theta&=t+\sqrt\dt\cdot\wt\theta,& \\
  % \Om&=\alpha,&
     \Bigg\|q_2^\pm(t)-\frac1{\eps_{\dt}^{\pm2}}\begin{pmatrix}
      \cos\frac{2\pi t}{T_{\dt}^\pm}\\[4pt]
      \sin\frac{2\pi t}{T_{\dt}^\pm}  
\end{pmatrix}\Bigg\|&\leq \cO(\eps_{\dt}^{\pm2}),&\\
   \Bigg\|p_2^\pm (t)\pm\dt\eps_{\dt}^\pm\begin{pmatrix}
      \sin\frac{2\pi t}{T_{\dt}^\pm}    \\[4pt]
      -\cos  \frac{2\pi t}{T_{\dt}^\pm}
\end{pmatrix}\Bigg\|&\leq \cO(\dt\eps_{\dt}^{\pm5}),& \\
   \end{aligned}
   \right.
\ee
%   with $\dt\lesssim o(\eps^7)\ll 1$, we can find a periodic orbit $\gamma_\dt$ (nearly circular), which can be expressed by 
%\be
%\gamma_\dt( t)&=&\Big\{\Big(\frac{\cos\phi}{\eps^2}+O(\frac{\sqrt\dt}{\eps^{11/2}}), \frac{\sin\phi}{\eps^2}+O(\frac{\sqrt\dt}{\eps^{11/2}}), \mp\dt\eps\sin\phi+O(\frac{\dt^{3/2}}{\eps^{5/2}}), \mp\dt\eps\cos\phi+O(\frac{\dt^{3/2}}{\eps^{5/2}}), \nonumber\\
%& &1+O(\frac{\dt}{\eps^4}),\theta,O(\frac{\dt}{\eps^{4}}),\alpha \Big)\Big|(\theta,\phi)\in\T^2\Big\}
%\ee
%as long as \textr{$2\pi\Big/\int_0^{\wh T_\dt}\frac{\partial H_{rot}}{\partial \Om}\Big|^{\Om=\alpha}_{\gamma_\dt} dt\in\Q$}.
\end{thm}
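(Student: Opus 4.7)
The plan is to obtain the periodic orbits by continuing from the unperturbed case $\dt=0$, via the implicit function theorem applied to a Poincar\'e return map. First I would introduce a rescaling to remove the singular behaviour of $\frac{|p_2|^2}{2\dt}$ at $\dt=0$: set $q_2=\eps^{-2}\tilde q$ and $p_2=\dt\eps\,\tilde p$, where $\eps$ is a size parameter that will eventually become $\eps_\dt^\pm$. After fixing $\Om=\alpha$ and discarding the cyclic angle $\theta$, $H_{rot}^\dt$ decomposes as the reduced S-J two-body Hamiltonian (having the relative circular equilibrium $r=1$, $R=0$, $\dot\theta=1$), a Kepler-like Hamiltonian for $\tilde q$ about the total mass $\mu+(1-\mu)=1$ at the origin, plus higher-order corrections. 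A key point is that the dipole term of the primaries' potential as seen from the planet vanishes identically, because the Jacobi coordinates place the S-J centre of mass at the origin; consequently the leading coupling between the two sub-systems is of quadrupole order, i.e.\ of size $\cO(\dt\eps^6)$ in the rescaled Hamiltonian.

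At $\dt=0$ the unperturbed dynamics decouple: the S-J subsystem sits at the circular equilibrium, and the planet traces a circular Kepler orbit of radius $\eps^{-2}$ with inertial-frame angular frequency $\omega_P=\eps^3$, by Kepler's third law. Transporting to the Hadjidemetriou rotating frame, its angular frequency becomes $-1\pm\eps^3$; the two signs encode the two possible senses of the Keplerian motion relative to the primaries and will produce the branches $\gamma^\pm$. The whole system is periodic of period $T=2\pi q$, $q\in\N$, precisely when $(\omega_P-1)q\in\Z$, which for the principal resonance is equivalent to $\eps^3=1/q$. Together with the constraint $\eps\in(\eps_0/2,\eps_0)$, this resonance condition explains the range $q\in[\tfrac{4}{3}\eps_0^{-3},\tfrac{8}{3}\eps_0^{-3}]$ of the statement.

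Then I would continue this family to $\dt>0$ through the time-$T$ Poincar\'e map on the section $\{\theta=0\}$ of the 3-DOF reduced system. At $\dt=0$ the unperturbed return map has an explicit fixed point; its linearisation is the identity on the S-J block (they sit at the equilibrium) tensored with the Kepler monodromy on the planet block. The classical Kepler degeneracy (unit eigenvalue along the flow direction) is resolved by allowing the size parameter $\eps$ to vary simultaneously with $\dt$: I would apply the implicit function theorem to the coupled system \emph{(fixed-point condition) $+$ (resonance condition)} in the unknowns \emph{(initial data, $\eps_\dt$)}, the Kepler twist $\partial T_{K}/\partial a=3\pi a^{1/2}\neq 0$ providing the missing transversality. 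The output is smooth families $\gamma_\dt^\pm$ and $\dt\mapsto\eps_\dt^\pm$ defined on $[0,\dt_0]$; tracking the $\dt$-dependence through the IFT converts the size of the perturbation in the rescaled coordinates into the bounds (\ref{rot-conti-est}), the various powers of $\eps$ appearing there coming from undoing the rescaling $q_2=\eps^{-2}\tilde q$, $p_2=\dt\eps\,\tilde p$.

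The main obstacle is the interplay between the two small parameters $\dt$ and $\eps$: since the planet completes of order $q\sim\eps^{-3}$ revolutions during the period $T_\dt$, a generic $\cO(\dt)$ time-periodic perturbation could a priori accumulate to a correction of size $\cO(\dt\eps^{-3})$, which would destroy the continuation. What saves the argument is precisely the vanishing of the dipole moment noted above, making the effective coupling $\cO(\dt\eps^6)$ instead of $\cO(\dt)$ and keeping the perturbation inside the domain of validity of the implicit function theorem with the stated quantitative estimates.
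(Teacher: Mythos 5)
Your overall skeleton matches the paper's: rescale to the comet regime, observe that at $\dt=0$ the system decouples into the S--J circular solution plus a circular Kepler orbit for the planet, continue by a fixed-point argument on a return map, and tune the size parameter $\eps$ so that the $\theta$-rotation closes up rationally ($\eps^3\approx 1/q$, which indeed produces the two branches and the admissible range of $q$). However, there is a genuine gap in the continuation step itself. With your choice of section $\{\theta=0\}$ (whether you then take one return or the time-$2\pi q$ map), the unperturbed linearisation on the S--J block is, as you yourself state, exactly the identity: by the $1{:}1$ Kepler degeneracy the radial (epicyclic) frequency of the S--J pair equals $\dot\theta=1$, so the $(r,R)$-variables return exactly after each revolution of $\theta$ (and at the resonance $\eps^3=1/q$ the planet block of the time-$2\pi q$ map degenerates as well). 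Hence $I-D\cP_0$ has at least a two-dimensional kernel coming from the S--J directions, in addition to the Kepler degeneracies of the planet block. Your remedy --- appending the resonance condition as one extra equation and $\eps$ as one extra unknown, with the Kepler twist supplying transversality --- removes only a one-dimensional degeneracy in the planet block and leaves the S--J identity block untouched, so the implicit function theorem cannot be applied as you set it up. The paper avoids precisely this by sectioning on the planet's polar angle $\{\phi=0\}$ inside a fixed energy level of the reduced rescaled system: the return time is then $2\pi/(1\mp\eps^3)$, the rotator (S--J) block is a rotation by an angle differing from $2\pi$ by $\cO(\eps^3)$, all four multipliers are $\cO(\eps^3)$-away from $1$, and $\|(I-D\ol\cP_0^\pm)^{-1}\|\le 2/\eps^3$; a quantitative Brouwer fixed-point argument then gives the reduced periodic orbit (of period $\approx 2\pi$, so no accumulation over $q$ revolutions ever enters), and only afterwards is the rationality of the $\theta$-rotation number imposed and solved for $\eps_\dt^\pm$ by continuity, the full orbit being $q\mp1$ iterates of the reduced one.

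A second, quantitative, problem: the dominant $\dt$-dependent coupling is not the quadrupole term of size $\cO(\dt\eps^6)$ you invoke. The dipole cancellation controls the $\mu$-dependent correction $\cO(\mu\eps^7/\rho^3)$ to the planet's Kepler problem, which is already present at $\dt=0$ and is harmless. The genuinely new coupling for $\dt>0$ is the back-reaction of the planet on the S--J separation: after the rescalings $r=1+\sqrt{\dt/\eps}\,\wh r$, $R=\sqrt{\dt/\eps}\,\wh R$, $p_2=\dt\eps\,\wh v_2$ it enters the rescaled Hamiltonian at size $\cO(\sqrt{\dt/\eps})$ (note the $\sqrt{\dt}$), and the continuation works because $\mu\eps^7+\sqrt{\dt/\eps}\ll\eps^3$, i.e.\ $\dt\lesssim\eps^7$ (strengthened to $\dt\lesssim\eps^{15}$ when adjusting $\eps_\dt^\pm$). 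Your bookkeeping both misidentifies the mechanism that keeps the perturbation small and would not reproduce the scalings in \eqref{rot-conti-est}, whose powers $\dt/\eps^4$ and $\dt\eps^5$ come exactly from these $\sqrt{\dt}$-rescalings of $(r,R)$ together with $p_2=\dt\eps\,\wh v_2$; in particular, without rescaling $(r,R)$ at all, your scheme has no route to the stated estimates on $r^\pm-1$, $R^\pm$ and $\dot\theta^\pm-1$.
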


\begin{rmk}\label{rmk:uni}
During the proof of Theorem \ref{rot-continue}, we can see that, as $\dt \to 0^+$, 
%$\eps_\dt^\pm$ has a limit $\eps_*^\pm$ and $T_\dt^\pm$ also has a limit $T_*^\pm$. 
the periodic orbits $\gamma_\dt^\pm$ tend to certain periodic orbits $\gamma_*^\pm$ of the RPC3BP with the period $T_*^\pm=2\pi q$ being the limit of $T_\dt^\pm$. 
Besides, formally we have 
\[
\gamma_*^\pm(t)=\Big\{\Big(r_*^\pm(t),\theta_*^\pm(t),R_*^\pm(t),\Om_*(t),q_{2,*}^\pm(t),p_{2,*}^\pm(t)\Big)\Big\}\subset \R\times\T\times\R^2\times\R^4
\]
with 
\[
r_*^\pm (t)\equiv1, \ \theta_*^\pm (t)=t, \  R_*^\pm (t)\equiv0,\ \Om_*(t)\equiv\mu(1-\mu),
\]
and
\[
 q_{2,*}^\pm(t)=-\frac1{\eps_*^{\pm2}}\begin{pmatrix}
      \cos \frac tq \\[4pt]
      \sin \frac tq 
\end{pmatrix}+ \cO(\eps_{*}^{\pm2}), \  
v_{2,*}^\pm(t):=\lim_{\dt\rightarrow 0^+}\frac{p_{2}^\pm (t)}{\dt}=\mp\eps_*^\pm\begin{pmatrix}
      \sin \frac tq    \\[4pt]
      -\cos  \frac tq
\end{pmatrix}
\]
for certain $\eps_*^\pm$ being the limit of $\eps_\dt^\pm$. 
Although system \eqref{polar sys-4bp} has a singular limit as $\dt\rightarrow 0^+$, by a suitable rescaling transformation we get   a system $\wt H^\dt_{res}$ in (\ref{eq:Hres}) which indeed has a regular limit as $\dt\rightarrow 0^+$, namely the RPC3BP.
\end{rmk}

%\textcolor{green}{
%Do you think it would be better to put in the remark $\frac{t}{q}$ instead $\frac{2\pi t}{T_{*}^\pm}$?
%}

\begin{proof}
To proof this theorem we need to perform several changes of variables.
 In the first part of the proof, we consider the Hamiltonian system of $H_{rot}^\dt(r, R, q_2, p_2)$ as a system of three degrees of freedom, that is, we work in the variables $(r, R, q_2, p_2)$ and take the parameter $\Om= \alpha=\mu(1-\mu)$. 
 Then, the theorem will be a straightforward application of Proposition  \ref{prop-conti-peri}, once the Hamiltonian system of $H_{rot}^\dt$ is written in the suitable coordinates.

Now we describe the changes we perform, the details are given in Appendix \ref{A}. 
First we need to transfer the Hamiltonian system of $H_{rot}^\dt$ in (\ref{polar sys-4bp}), wich depends singularly on $\dt$, into a regular perturbation of  the RPC3BP. 
This can be achieved with a rescaling transformation $\Phi_{res}^1$ in \ref{trans-res}, namely we take
\[
(r,R,\theta,q_2,p_2) \xrightarrow{\Phi_{res}^1}(\wt r,\wt R,q_2,v_2)
\]
by
\[
\left\{
   \begin{aligned}
   p_2&=\dt v_2,&\\
   r&=1+\sqrt\dt\wt r,& \\
   R&=\sqrt\dt\wt R,& 
   %\theta&=t+\sqrt\dt\cdot\wt\theta,& \\
  % \Om&=\alpha,&
   \end{aligned}
   \right.
\]
which transforms $H_{rot}^\dt$ into (see  
%\eqref{rescal-sys-4bp-2}), 
\eqref{rescal-sys-4bp}, \eqref{eq:Hres},\eqref{eq:Hresdelta}, \eqref{f-fun}, \eqref{g-fun})
\be
 \wt H_{res}^\dt(\wt r,\wt R, q_2,v_2)&=& 
\underbrace{\Big[  -q_2\times v_2 +  \frac12|v_2|^2-\frac{1-\mu}{|q_2+ (\mu,0)|}-\frac{\mu}{|q_2-(1-\mu,0)|} \Big]}_{RPC3BP}\nonumber\\
  & &+\underbrace{\frac12\Big[\frac{\wt R^2}{\alpha}+\alpha\wt r^2\Big]}_{rotator}+\underbrace{\Delta \wt H_{res}^\dt(\wt r,q_2, v_2)}_{reminder}.\nonumber
\ee
Next, we can constraint $\wt H_{res}^\dt$ to certain domain of the phase space, where we expect to find the comet-type periodic orbits.
 For this purpose we apply another rescaling transformation $\Phi_{res}^2$ in (\ref{trans-res-2}):
\[
(\wt r,\wt R,q_2,v_2) \xrightarrow{ \Phi_{res}^2}(\wh r,\wh R,\wh q_2, \wh v_2)
\]
with
\[
\ q_2=\frac{\wh q_2}{\eps^2},\quad  v_2=\eps \wh v_2,\quad \wt r=\frac{\wh r}{\sqrt\eps},\quad\wt R=\frac{\wh R}{\sqrt\eps}
\]
where $0<\eps\ll1$ is a small parameter that will be fixed later on. 
The new system becomes (see \eqref{rescal-sys-4bp-2}), \eqref{f-fun1},\eqref{g-fun1}), 
\be
\wh H_{res}^{\dt, \eps}(\wh r, \wh R,\wh q_2,\wh v_2) &=&\Big[-\wh q_2\times \wh v_2+\eps^3(\frac{|\wh v_2|^2}2-\frac{1}{|\wh q_2|})+\cO(\frac{\mu\eps^7}{|\wh q_2|^3})\Big]\nonumber\\
& &+\frac12\Big[\frac{\wh R^2}{\alpha}+\alpha\wh r^2\Big]+
 \Dt \wh H_{res}^{\dt, \eps}(\wh r, \wh q_2,\wh v_2).\nonumber
\ee
%
%\marginpar{check the $\mu$ in the term $\cO(\frac{\mu\eps^7}{|\wh q_2|^3})$}
%

%\textr{Once the $\mu$ is fixed, it shouldn't be there; Here I mean $\mu$ could vary}. 

Finally, we write the second body part $(\wh q_2,\wh v_2)$ of previous variables in symplectic polar coordinates:
\[
(\wh q_2,\wh v_2)\xrightarrow{\Phi_{pol}} (\rho,\Upsilon,\phi,G), \quad\text{via } \left\{
\begin{split}
\pi_1\wh q_2&=\rho\cos\phi, \\
\pi_2\wh q_2&=\rho\sin\phi,\\
\pi_1\wh v_2&=\Upsilon\cos\phi-\frac{G}{\rho}\sin\phi, \\
\pi_2\wh v_2&=\Upsilon\sin\phi+\frac{G}{\rho}\cos\phi,
\end{split}
\right.
\]
then we get the final Hamiltonian (see (\ref{res-sys-comet}), \eqref{exp-R}), \eqref{exp-f},\eqref{exp-g}):
\be\label{eq:ham-res-wh}
%\wh 
H_{res}^{\dt, \eps}(\wh r,\wh R,\rho,\Upsilon,\phi,G)&=&-G+\eps^3\Big[\frac12(\Upsilon^2+\frac{G^2}{\rho^2})-\frac1\rho\Big]
+
\frac12\Big[\frac{\wh R^2}{\alpha}+\alpha\wh r^2\Big]\nonumber\\
& &+\cO(\frac{\mu\eps^7}{\rho ^3})
+\Dt H_{res}^{\dt, \eps}(\wh r,\rho,\phi,\Upsilon,G).
\ee
Now, for any given $C>1$,  we consider the  bounded domain:
\begin{equation}\label{eq:Dres}
(\wh r,\wh R,\rho,\Upsilon,\phi,G)\in \cD_{res}:=B_{\R^4}(0,C)\times\T\times[-C,C]\subset\R^4\times\T\times\R ,
\end{equation} 
%bounded, we take $0<\eps\ll1$ suitably small and $0\leq\dt\leq \dt_1\eps^{15}$ with $\dt_1=\dt_1(c,C,\mu)$}, 
%
%and there exists a constant $M=M(\dt_1,\cD_{res})>0$ such that 
%\begin{equation}\label{eq:boundDeltaHres}
%\big\|\Dt  H_{res}^{\dt,\eps}(\wh r,\rho,\phi,\Upsilon,G)\big\|
%=\sup_{\cD_{res}} |\Dt  H_{res}^{\dt,\eps} (\wh r,\rho,\phi,\Upsilon,G)| \leq M\frac{\sqrt\dt}%{\sqrt\eps}.
%\end{equation}
%
%\textcolor{green}{ I have moved the statement about the bound of $\Dt  H_{res}^{\dt,\eps}$ to the proof of the proposition because here $\dt_1$ has not appear yet!!!!!}
and  we apply  Proposition \ref{prop-conti-peri} to the Hamiltonian system of $H_{res}^{\dt, \eps}$ in \eqref{eq:ham-res-wh}: there exist $\eps_1>0$ and $\dt_1 >0$ such that for any $0<\eps\leq\eps_1$ and $0\leq \dt\leq\dt_1\eps^7$, the system 
$
%\wh 
H_{res}^{\dt, \eps}$ has two periodic orbits
\[
 \wh\gamma_{\dt,\eps}^\pm(t)=\Big\{\Big(\wh r^\pm(t),\wh R^\pm(t),\rho^\pm(t),\Upsilon^\pm(t),\phi^\pm(t),G^\pm(t)\Big)\Big|   \  t\in[0,\wh T_{\dt,\eps}^\pm]\Big\}
\]
satisfying \eqref{est-peri-res} and 
contained in the domain $\cD_{res}$ with the period $\wh T_{\dt,\eps}^\pm$ (given in (\ref{peri-res-orbit}))
\[
 \wh T_{\dt,\eps}^\pm=\frac{2\pi}{1\mp\eps^3+\cO(\eps^7)+\cO(\sqrt{\dt/\eps})}.
\]
%The proof is straightforward. First, we need to transfer system (\ref{polar sys-4bp}) into a perturbative one of the RPC3BP. Precisely, we can apply a rescaling transformation $\Phi_{res}^1$ (see (\ref{trans-res}) of Appendix \ref{A}) to $H_{rot}^\dt$, and get a new Hamiltonian $H_{res}^\dt(q_2,v_2,\wt r,\wt R)$ in (\ref{rescal-sys-4bp}). Next, we can constraint $H_{res}^\dt$ to certain domain of the phase space, where we expect to find the comet-type periodic orbits. For this purpose we apply another rescaling transformation $\Phi_{res}^2$ in (\ref{trans-res-2}) to $H_{res}^\dt$, and get the final operable Hamiltonian $\wh H_{res}^\dt(\rho,\Upsilon,\phi,G,\wh r,\wh R)$ in (\ref{res-sys-comet}). Due to Proposition \ref{prop-conti-peri}, there exists a list of periodic orbits $\wh\gamma_{\dt,\eps}$ for system $\wh H_{res}^\dt$ in certain bounded region $\cD_{res}$, with the period $\wh T_{\dt,\eps}$ given in (\ref{peri-res-orbit}) for any $0<\eps\leq\eps_1$ and $0<\dt\leq\dt_1\eps^{7}$. 
Now we can pull back $\wh\gamma_{\dt,\eps}^\pm$ to the Hadjidemetriou's rotating coordinates $(r,R,q_2,p_2)$ undoing changes $\Phi_{pot}$, $\Phi_{res}^{2}$ and $\Phi_{res}^{1}$:
\[
  \left\{
   \begin{aligned}
      r^\pm(t)&=1+\sqrt{\frac{\dt}{\eps}}\wh r^\pm(t),& \\
   R^\pm(t)&=\sqrt{\frac{\dt}{\eps}}\wh R^\pm(t),&\\
   q_2^\pm(t)&=\frac1{\eps^2}\Big(\rho^\pm(t)\cos\phi^\pm(t),\rho^\pm(t)\sin\phi^\pm(t)\Big),& \\
   p_2^\pm(t)&=\dt\eps\Big(\Upsilon^\pm(t)\cos\phi^\pm(t)-\frac{G^\pm(t)}{\rho^\pm(t)}\sin\phi^\pm(t), \Upsilon^\pm(t)\sin\phi^\pm(t)+\frac{G^\pm(t)}{\rho^\pm(t)}\cos\phi^\pm(t)\Big).& 
   \end{aligned}
   \right.
\]
{From} now on, we take $0<\eps\leq\eps_1$ and a stronger condition in $\dt$: $0<\dt<\dt_1\eps^{15}\ll\dt_1\eps^7$. 
As for the other two coordinates $(\theta,\Om)$, we already know that $\Om=\alpha=\mu(1-\mu)$ 
is fixed. 
Using the espression of $H_{rot}^\dt$ in (\ref{polar sys-4bp}), we obtain
 \be\label{theta-eq}
\dot\theta(t)=\frac{\partial H_{rot}^\dt}{\partial \Om}\Bigg|^{\Om=\alpha}_{\wh\gamma_{\dt,\eps}^\pm}=\frac{1-\dfrac{\dt G^\pm(t)}{\alpha\eps}}{\big(1+\sqrt{\dfrac{\dt}{\eps}}\wh r^\pm(t)\big)^2}=1-2\sqrt{\frac{\dt}{\eps}}\wh r^\pm(t)+\cO(\frac{\dt}{\eps}), 
\ee
of which the orbits 
\[
\gamma_{\dt}^\pm(t)=(r^\pm(t),\theta^\pm(t),R^\pm(t),\alpha,q_2^\pm(t),p_2^\pm(t)),\quad\forall t\in\R
\]
are still periodic, as long as  
\be\label{ration-cond}
\mathfrak R^\pm(\dt,\eps):=\frac{2\pi}{\int_0^{\wh T_{\dt,\eps}^\pm}\frac{\partial H_{rot}^\dt}{\partial \Om}\Big|^{\Om=\alpha}_{\wh\gamma_{\dt,\eps}} dt}\in\Q.
\ee
Observe that, by (\ref{theta-eq}), we can estimate
\[
\mathfrak R^\pm(\dt,\eps)=\frac{2\pi}{\wh T_{\dt,\eps}^\pm+\cO(\sqrt{\dfrac{{\dt}}{\eps}})}.
\]
By using the formula of $\wh T_{\dt,\eps}^\pm$ in (\ref{peri-res-orbit}), we can choose $0<\eps_0<\eps_1/2$ suitably small, such that for any $\eps\in[\eps_0/2,\eps_0]$ and  $0\leq\dt\leq \dt_1\eps^{15}$, previous ratio can be estimated by
\[
\mathfrak R^\pm(\dt,\eps)=1\mp\eps^3+\cO(\eps^7)+\cO(\sqrt{\dfrac{\dt}{\eps}}).
\]
So we can fix a rational number 
\be\label{restr-number}
\underbrace{\frac{q-1}{q}\in[1-\frac{3}{4}\eps_0^3,1-\frac{3}{8}\eps_0^3]}_{\text{for }\mathfrak R^+},\quad\quad\Big(\text{resp.}\ \underbrace{\frac{q+1}{q}\in[1+\frac{3}{8}\eps_0^3,1+\frac{3}{4}\eps_0^3]}_{\text{for }\mathfrak R^-}\Big)
\ee
such that for any $0\leq\dt\leq\dt_0$ with $\dt_0:=\dt_1\big(\frac{\eps_0}{2}\big)^{15}$, we can always find a $\eps_{\dt}^{\pm}\in[9\eps_0/16, 15\eps_0/16]$, such that 

%\textcolor{green}{
%I don't understand the wrgument for \eqref{restr-number}. Should'nt be something like
%$$\frac{q-1}{q}\in[1-c_1eps_0^3,1-c_2\eps_0^3]$$
%where $c_1>1$ and $c_2<1$?, your constants are both smaller than $1$ and this confuses me a %little. Can you explain better the reasoning?\\
%When all this is clear we also have to check the last part of the proof}
%\textr{I will explain why in my email, please check}
%
%\marginpar{see my comment in green}
%
\[
\mathfrak R^+(\dt,\eps_{\dt}^+)=\frac{q-1}q \hspace{50pt} (\text{ resp. } \mathfrak R^-(\dt,\eps_{\dt}^-)=\frac{q+1}q).
\]
%\marginpar{see my coment in red}
Based on this choice, the orbit $\gamma_{\dt}^\pm$ must be periodic, with the period 
\be \label{eq:period}
T_{\dt}^\pm=(q\mp 1)\wh T_{\dt,\eps}^\pm&=&\frac{2\pi(q\mp 1)}{1\mp\eps_{\dt}^{\pm3}+\cO(\eps_{\dt}^{\pm7})+\cO(\sqrt{\dt/{\eps_{\dt}^\pm}})}\nonumber\\
&=&2\pi q\big(1+\cO(\sqrt{\frac{\dt}{\eps_{\dt}^\pm}})\big).
\ee
Recall that $\dt_0=\dt_1(\frac{\eps_0}{2})^{15}$ and due to (\ref{restr-number}),  for all $0\leq\dt\leq\dt_0$ we have
\[
\frac{4\pi}{3}\eps_0^{-3}\leq T_{\dt}^\pm\leq\frac{16\pi}{3}(2+\eps_0^{-3})
\]
as long as $0<\eps_0\ll1$.
As for (\ref{rot-conti-est}), it's an instant deduction due to estimates \eqref{est-peri-res} obtained in  Proposition \ref{prop-conti-peri}.
% Recall that for any $\eps\in[\eps_0/2,\eps_0]$ and $\dt\in[0,\dt_0]$, we can find a periodic orbit $\wh\gamma_\dt$ due to Prop \ref{prop-conti-peri} of which the period can be estimated by 
%\[
%\wh T_\dt=\frac{2\pi}{1\mp\eps_{\dt,\mu}^3+O(\sqrt\dt/\sqrt{\eps_{\dt,\mu}})}.
%\]
%On the other side, rational numbers are dense in $[\eps_0/2,\eps_0]$. So we can always find a couple of values $(\eps_*,\dt_*)\in[\eps_0/2,\eps_0]\times[0,\dt_0]$, such that (\ref{ration-cond}) holds. Besides, we can easily see that $T_*=1\pm\eps_*^3+O(\sqrt{\dt_*/\eps_*})$ and this Theorem is finally proved.
\end{proof}
\vspace{10pt}

%\textr{\ Notice that the associated ODE can be expressed by
%\be\label{}
%\left\{
%   \begin{aligned}
%   \dot q_2&=\frac{\partial H_{rot}}{\partial p_2}= \frac{\partial H_{rot}}{\partial v_2}\cdot \frac{\partial v_2}{\partial p_2}=\frac{\partial H_{rot}}{\partial v_2}\cdot\frac1\dt=\frac{\partial H_{res}^\dt}{\partial v_2},& \\
%   \dot v_2&=\frac{\dot p_2}\dt=-\frac1\dt\cdot\frac{\partial H_{rot}}{\partial q_2}=-\frac{\partial H_{res}^\dt}{\partial q_2},& \\
%   \dot{\wt r}&=\frac{\dot r}{\sqrt\dt}=\frac{1}{\sqrt\dt}\cdot\frac{\partial H_{rot}}{\partial R}=\frac{1}{\sqrt\dt}\cdot\frac{\partial H_{rot}}{\partial \wt R}\cdot\frac{\partial\wt R}{\partial R}=\frac{1}{\dt}\cdot\frac{\partial H_{rot}}{\partial \wt R}=\frac{\partial H_{res}^\dt}{\partial \wt R},& \\
%   \dot{\wt R}&=\frac{\dot R}{\sqrt\dt}=\frac{-1}{\sqrt\dt}\cdot\frac{\partial H_{rot}}{\partial r}=\frac{-1}{\sqrt\dt}\cdot\frac{\partial H_{rot}}{\partial \wt r}\cdot\frac{\partial\wt r}{\partial r}=\frac{-1}{\dt}\cdot\frac{\partial H_{rot}}{\partial \wt r}=-\frac{\partial H_{res}^\dt}{\partial \wt r}.&
%   \end{aligned}
%   \right.
%   \ee
%   }

%  \textr{
%   \be
%    f_2(q_2,\wt r,\wt\theta,0)&=&\Big[\frac{1-\mu}{|q_2+ (\mu,0)|}-\frac{1-\mu}{|q_2+ (\mu,0)+\mu\sqrt\dt(\wt r,\wt\theta)+ O(\dt)|}\Big]\nonumber\\
%    & &+\Big[\frac{\mu}{|q_2-(1-\mu,0)|}-\frac{\mu}{|q_2-(1-\mu,0)-(1-\mu)\sqrt\dt(\wt r,\wt\theta)+O(\dt)|}\Big].\nonumber
%  \ee
%  }
%  \begin{lem}[Angular Momentum] 
%  \[
%  \frac{r^2\dot\theta}{1-\mu}+(1-\frac{\dt}{(1+\dt)(1-\mu)})q_2\times p_2+\mu(r\cos\theta,r\sin\theta)Jp_2=const
%  \]
%  \end{lem}

\subsection{Continuation method from RPC3BP to 3BP}\label{s2s2}

This section is devoted to proof the existence of periodic orbits of the Hamiltonian system $
%\wh 
H_{res}^{\dt, \eps}$ given in \eqref{eq:ham-res-wh}
%\eqref{res-sys-comet}, \eqref{exp-R}, \eqref{exp-f} and \eqref{exp-g} 
in the domain $\cD_{res}$ defined in \eqref{eq:Dres}.

\begin{prop}\label{prop-conti-peri}
There exist $\eps_1=\eps_1(\cD_{res},\mu)\ll1$ and $\dt_1=\dt_1(\cD_{res},\mu)$, such that for any $0<\eps\leq\eps_1$ and $0<\dt\leq\dt_1\eps^{7}$, we can find two periodic orbits 
$\wh \gamma_\dt^\pm$ 
of system 
$
%\wh 
H_{res}^{\dt, \eps}$ in (\ref{eq:ham-res-wh}) 
\[
 \wh\gamma_{\dt, \eps}^\pm(t):=\Big\{\Big(\wh r^\pm(t),\wh R^\pm(t),\rho^\pm(t),\Upsilon^\pm(t),\phi^\pm(t),G^\pm(t)\Big)\Big\}\subset \cD_{res},
\]
where $\cD_{res}$ is defined in \eqref{eq:Dres}, 
which satisfy
\[
\wh\gamma_{\dt,\eps}^\pm(t+\wh T_{\dt,\eps})= \wh\gamma_{\dt,\eps}^\pm(t),\quad\forall \ t\in\R
\] 
with the period
 \be\label{peri-res-orbit}
 \wh T_{\dt,\eps}^\pm=\frac{2\pi}{1\mp\eps^3+\cO(\eps^7)+\cO(\sqrt{\dt/\eps})}.
 \ee
Moreover, there exist constants $M_1, M_3 > 1$ depending on $\eps_1,\dt_1$, such that
\be\label{est-peri-res}
 \left\{
   \begin{aligned}
      \|\wh r^\pm(t)\|&\leq M_3\frac{\sqrt\dt}{\eps^{7/2}},&\\
   \|\wh R^\pm(t)\|&\leq M_3\frac{\sqrt\dt}{\eps^{7/2}},&\\
   \|\rho^\pm(t)-1\|&\leq M_1(\mu\eps^4+\frac{\sqrt\dt}{\eps^{7/2}}),&\\
   \|\Upsilon^\pm(t)\|&\leq M_1(\mu\eps^4+\frac{\sqrt\dt}{\eps^{7/2}}),& \\
   \|\dot\phi^\pm(t)-(1\mp\eps^3)\|&\leq 2M_1(\mu\eps^7+\frac{\sqrt\dt}{\eps^{1/2}}),& \\
   \|G^\pm(t)\mp1\|&\leq M_1(\mu\eps^4+\frac{\sqrt\dt}{\eps^{7/2}}).&
   %\theta&=t+\sqrt\dt\cdot\wt\theta,& \\
  % \Om&=\alpha,&
   \end{aligned}
   \right.
\ee
% \[
% \wh\gamma_\dt(t):=\Big\{\underbrace{\Big(1+O(\frac{\sqrt\dt}{\eps^{7/2}}),O(\frac{\sqrt\dt}{\eps^{7/2}}),\phi(t),\pm1+O(\frac{\sqrt\dt}{\eps^{7/2}}),O(\frac{\sqrt\dt}{\eps^{7/2}}),O(\frac{\sqrt\dt}{\eps^{7/2}}) \Big)}_{(\rho,\Upsilon,\phi,G,\wh r,\wh R)}\Big|\phi(t)\in\T\Big\}
% \]
% with $ \wh\gamma_\dt(t+\wh T_\dt)= \wh\gamma_\dt(t)$ for period 
% \[
% \wh T_\dt=\frac{2\pi}{1\mp\eps^3+O(\sqrt\dt/\sqrt\eps)}.
% \]
%  Here $\phi(t)$ can be estimated by 
% \[
%\dot\phi(t)=-1+\eps^3\frac{\ol G}{\rho^2}+O(\frac{\mu\eps^7}{|\rho|^3})+O(\frac{\sqrt\dt}{\sqrt\eps}).
% \]
\end{prop}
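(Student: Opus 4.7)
The plan is to apply the implicit function theorem to a Poincar\'e return map of the Hamiltonian flow of $H_{res}^{\dt,\eps}$ in \eqref{eq:ham-res-wh} restricted to a suitable energy level on $\cD_{res}$. Dropping the two perturbations $\cO(\mu\eps^7/\rho^3)$ and $\Dt H_{res}^{\dt,\eps}$ leaves a principal Hamiltonian that splits into a Kepler-like piece $-G+\eps^3[\tfrac12(\Upsilon^2+G^2/\rho^2)-1/\rho]$ in $(\rho,\Upsilon,\phi,G)$ and a harmonic oscillator $\tfrac12[\wh R^2/\alpha+\alpha\wh r^2]$ in $(\wh r,\wh R)$. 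The Kepler piece admits the explicit circular solutions $\rho=1,\ \Upsilon=0,\ G=\pm 1$ with $\dot\phi=-1\pm\eps^3$, while the oscillator has the equilibrium $(\wh r,\wh R)=(0,0)$. Combining them yields two periodic orbits $\wh\gamma_0^\pm$ of the principal system with period $T_0^\pm=2\pi/(1\mp\eps^3)$; these will be the $\dt\to 0^+$ limits of the $\wh\gamma_{\dt,\eps}^\pm$ sought in \eqref{peri-res-orbit}.

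I next compute the linearization $D\Pi^{0,\eps}$ of the Poincar\'e map of the principal Hamiltonian at the fixed point representing $\wh\gamma_0^\pm$ on the section $\{\phi=\phi_0\}$ cut by the energy level of $\wh\gamma_0^\pm$. Because the principal system decouples, this linearization is block diagonal. The $(\wh r,\wh R)$-block is a rotation by the oscillator phase accumulated over $T_0^\pm$, i.e.\ by angle $T_0^\pm$, which modulo $2\pi$ equals $\pm 2\pi\eps^3/(1\mp\eps^3)$. A direct linearization of Kepler at $\rho=G^2=1$ gives $\delta\dot\rho=\eps^3\delta\Upsilon$ and $\delta\dot\Upsilon=-\eps^3\delta\rho$, so the $(\rho,\Upsilon)$-block is a rotation by frequency $\eps^3$ over time $T_0^\pm\approx 2\pi$, i.e.\ by angle $\approx 2\pi\eps^3$. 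Both rotations differ from the identity by an amount of size $\cO(\eps^3)$, so $D\Pi^{0,\eps}-I$ is invertible on the section with inverse of norm $\cO(\eps^{-3})$.

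Finally, I treat the full Poincar\'e map as $\Pi^{\dt,\eps}=\Pi^{0,\eps}+\mathrm{Pert}$. The $\cO(\mu\eps^7/\rho^3)$ contribution integrates over $T_0^\pm\sim 1$ to a shift of size $\cO(\mu\eps^7)$, acting essentially on the Kepler block. The remainder $\Dt H_{res}^{\dt,\eps}$ produced by the rescalings in the Appendix (in particular $r=1+\sqrt\dt\,\wh r$ combined with $\wt r=\wh r/\sqrt\eps$) has leading size $\cO(\sqrt{\dt/\eps})$ on $\cD_{res}$, producing a shift of the same order on the $(\wh r,\wh R)$ block. A Banach fixed-point argument on a ball of radius $\cO(\mu\eps^4+\sqrt\dt/\eps^{7/2})$ in the section, obtained by multiplying $\|\mathrm{Pert}\|$ by $\|D\Pi^{0,\eps}-I\|^{-1}\sim\eps^{-3}$, then gives a unique fixed point producing $\wh\gamma_{\dt,\eps}^\pm$ and the bounds \eqref{est-peri-res}. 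The period formula \eqref{peri-res-orbit} is recovered by integrating $\dot\phi$ along the perturbed orbit.

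The main obstacle is the sharp $C^1$-bookkeeping of $\Dt H_{res}^{\dt,\eps}$ on $\cD_{res}$: the two successive rescalings assign different weights in $\sqrt\dt$ and $\sqrt\eps$ to the variables $(\wh r,\wh R)$ and $(\rho,\Upsilon,\phi,G)$, which is precisely why the two scales $\cO(\mu\eps^4+\sqrt\dt/\eps^{7/2})$ and $\cO(\sqrt\dt/\eps^{7/2})$ of \eqref{est-peri-res} arise from treating the two blocks of $D\Pi^{0,\eps}-I$ separately. The factor $\eps^{-3}$ lost in the inversion reflects the near-resonance between the oscillator unit frequency and the angular frequency $1\mp\eps^3$; it also forces the smallness requirement $\dt\le \dt_1\eps^7$, ensuring that $\sqrt\dt/\eps^{7/2}$ stays small and that the contraction is strict.
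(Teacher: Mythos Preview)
Your proposal is correct and follows essentially the same route as the paper: reduce to a Poincar\'e map on the section $\{\phi=\phi_0\}$ at fixed energy, identify the decoupled principal orbits $(\wh r,\wh R,\rho,\Upsilon,G)=(0,0,1,0,\pm1)$, compute that both $2\times2$ blocks of the linearized return map are rotations by angles $\cO(\eps^3)$ so that $(I-D\Pi)^{-1}=\cO(\eps^{-3})$, and then run a contraction/fixed-point argument with the perturbation of size $\cO(\mu\eps^7+\sqrt{\dt/\eps})$. The paper also obtains the sharper $(\wh r,\wh R)$ bound exactly as you indicate, by exploiting that the $\cO(\mu\eps^7/\rho^3)$ term does not couple to $(\wh r,\wh R)$, so only the $\cO(\sqrt{\dt/\eps})$ piece enters that block.
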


\begin{proof}
First observe (see Appendix \ref{A}) that, as the domain $\cD_{res}$ (see \eqref{eq:Dres}) is a compact set, 
 there exists a constant $M=M(\dt_1,\cD_{res})>0$ such that 
\begin{equation}\label{eq:boundDeltaHres}
\big\|\Dt  H_{res}^{\dt,\eps}(\wh r,\rho,\phi,\Upsilon,G)\big\|
=\sup_{\cD_{res}} |\Dt  H_{res}^{\dt,\eps} (\wh r,\rho,\phi,\Upsilon,G)| \leq M\sqrt{\frac{\dt}{\eps}}.
\end{equation}
Therefore  by removing 
%\textr{
$\cO(\frac{\mu\eps^7}{\rho^3})+\Dt H_{res}^{\dt,\eps}(\wh r,\rho,\phi,\Upsilon,G)$, from the Hamiltonian $H_{res}^{\dt,\eps}$ in \eqref{eq:ham-res-wh}
%}, 
we get a decoupled truncated system 
\be
\ol H_{res}^0(\wh r,\wh R,\rho,\Upsilon,\phi,G)=-G+\eps^3\Big[\frac12(\Upsilon^2+\frac{G^2}{\rho^2})-\frac1\rho\Big]+\frac12\Big[\frac{\wh R^2}{\alpha}+\alpha\wh r^2\Big],
\ee
of which two periodic solutions can be found:
\be\label{unper-peri-orbit}
\ol\gamma_0^\pm:=\{\wh r=0,\ \wh R=0,\ \rho=1,\ \Upsilon=0,\ \phi\in\T,\  G=\pm 1\}.
\ee
The period of $\ol\gamma_0^\pm$ is $\frac{2\pi}{1\mp\eps^3}$. 
%\textr{Actually, $\ol H_{res}^0=\wh H_{res}^\dt\big|_{\mu=\dt=0}$.} 
Notice that $\ol\gamma_0^\pm$ lie on the energy level $\{\ol H_{res}^0=E^\pm\}$ respectively,  with  $E^\pm:=\mp1-\frac{\eps^3}2$.
 Notice that the energy level can be expressed as a graph
\be
G&=&\ol G^\pm(\rho,\Upsilon,\wh r,\wh R,E^\pm;\eps)\nonumber\\
&=&-E^\pm+\frac12\Big[\frac{\wh R^2}{\alpha}+\alpha\wh r^2\Big]+\eps^3 \big[\frac{\Upsilon^2}{2}+\frac{(\frac{\wh R^2}{\alpha}+\alpha\wh r^2-2E^\pm)^2}{8\rho^2}-\frac{1}{\rho}\big]+\cO(\eps^6).\nonumber
\ee
If we further restrict the energy level to the section $\Sigma_0:=\{\phi=0\}$ and consider the 
Poincar\'{e} maps $\ol {\cP}_0^{\pm}:\Sigma_0\rightarrow\Sigma_0$, we can see that it equals just the time-$2\pi$ map of the following ODE (rectified flow):
\be\label{section-flow}
\quad\quad\begin{pmatrix}
  \wh r'\\
   \wh R'\\
   \rho'       \\
   \Upsilon'   
\end{pmatrix}=
\begin{pmatrix}
 \dfrac{\partial \wh r}{\partial\phi}  \vspace{5pt} \\
        \dfrac{\partial \wh R}{\partial\phi} \vspace{5pt} \\
      \dfrac{\partial \rho}{\partial\phi}  \vspace{5pt}  \\
     \dfrac{\partial \Upsilon}{\partial\phi}  
\end{pmatrix}=
%\begin{pmatrix}      \dfrac{\frac{\partial \wh r}{\partial t}}{\frac{\partial\phi}{\partial t}} \vspace{5pt} \\
   %     \dfrac{\frac{\partial \wh R}{\partial t}}{\frac{\partial\phi}{\partial t}}\vspace{5pt}\\
      %\dfrac{\frac{\partial \rho}{\partial t}}{\frac{\partial\phi}{\partial t}}  \vspace{5pt}  \\
     %\dfrac{\frac{\partial \Upsilon}{\partial t}}{\frac{\partial\phi}{\partial t}}  
%\end{pmatrix}=
\underbrace{
\begin{pmatrix}
\dfrac{\frac{\partial \ol H_{res}^0}{\partial \wh R}}{\frac{\partial\ol H_{res}^0}{\partial G}}  \vspace{5pt} \\
        \dfrac{-\frac{\partial \ol H_{res}^0}{\partial \wh r}}{\frac{\partial\ol H_{res}^0}{\partial G}} 
\vspace{5pt}\\
      \dfrac{\frac{\partial \ol H_{res}^0}{\partial \Upsilon}}{\frac{\partial\ol H_{res}^0}{\partial G}}  \vspace{5pt}  \\
     \dfrac{-\frac{\partial \ol H_{res}^0}{\partial \rho}}{\frac{\partial\ol H_{res}^0}{\partial G}}  
       \end{pmatrix}
=
\begin{pmatrix}
  \dfrac{\wh R}{\alpha(-1+\eps^3\dfrac{\ol G^\pm}{\rho^2})}\\
       \dfrac{-\alpha \wh r}{(-1+\eps^3\dfrac{\ol G^\pm}{\rho^2})}\\
      \dfrac{\eps^3\Upsilon}{-1+\eps^3\dfrac{\ol G^\pm}{\rho^2}}    \\
      \dfrac{\eps^3(\dfrac{\ol G^{\pm2}}{\rho^3}-\dfrac{1}{\rho^2})}{-1+\eps^3\dfrac{\ol G^\pm}{\rho^2}} 
\end{pmatrix}
}_{
%=
%\begin{pmatrix}
%       \frac{\partial \ol G}{\partial \Upsilon}    \vspace{5pt}  \\
%        -\frac{\partial \ol G}{\partial \rho}  \vspace{5pt}  \\
%        \frac{\partial \ol G}{\partial \wh R}  \vspace{5pt}  \\
%        -\frac{\partial \ol G}{\partial \wh r}
%\end{pmatrix}
:=\mathscr V^\pm(\wh r, \wh R,\rho,\Upsilon,\phi)}=\begin{pmatrix}
       -\frac{\wh R}{\alpha} +\cO(\eps^3)\\
       \alpha\wh r+\cO(\eps^3)\\
      \cO(\eps^3)    \\
      \cO(\eps^3) 
\end{pmatrix}.
%   \rho'&:=\frac{\partial \rho}{\partial\phi}= \frac{\partial \ol G}{\partial \Upsilon}=\frac{\eps^3\Upsilon}{-1+\eps^3\frac{G}{\rho^2}},& \\
%   \Upsilon'&:=\frac{\partial \Upsilon}{\partial\phi}=-\frac{\partial \ol G}{\partial \rho}=\frac{\eps^3(\frac{G^2}{\rho^3}-\frac{1}{\rho^2})}{-1+\eps^3\frac{G}{\rho^2}},& \\
%   \wh r'&:=\frac{\partial \wh r}{\partial\phi}=\frac{\partial \ol G}{\partial \wh R}=\frac{\wh R}{\alpha(-1+\eps^3\frac{G}{\rho^2})},& \\
%   \wh R'&:=\frac{\partial \wh R}{\partial\phi}=-\frac{\partial \ol G}{\partial \wh r}=\frac{-\alpha \wh r}{(-1+\eps^3\frac{G}{\rho^2})}.&
   \ee
Therefore, the periodic orbits $\ol\gamma_0^\pm$ correspond to  fixed points of $\ol\cP_0^\pm$, i.e. $Z^*:=(\wh r,\wh R,\rho,\Upsilon)=(0,0,1,0)$. 
Linearizing $\ol\cP_0 ^\pm$ around the fixed point, we know
\be
D\ol\cP_0^\pm(Z^*)=e^{2\pi D\mathscr V^\pm (\ol\gamma_0^\pm)}&=&\exp\Bigg\{2\pi\begin{pmatrix}
 0&\frac{1}{\alpha(-1\pm\eps^3)}  &0&0  \\
 \frac{-\alpha}{-1\pm\eps^3}&0  &0&0\\
   0&0& 0   & \frac{\eps^3}{-1\pm\eps^3}\\
   0&0& \frac{-\eps^3}{-1\pm\eps^3}   &  0
\end{pmatrix}\Bigg\}\nonumber\\
&=&\begin{pmatrix}
 \cos\frac{2\pi}{-1\pm\eps^3}&\frac1\alpha\sin\frac{2\pi}{-1\pm\eps^3}&0&0 \\
    -\alpha\sin\frac{2\pi}{-1\pm\eps^3}& \cos\frac{2\pi}{-1\pm\eps^3}  &0&0\\
     0&0&\cos\frac{2\pi\eps^3}{-1\pm\eps^3}     &   \sin\frac{2\pi\eps^3}{-1\pm\eps^3}  \\
     0&0&-\sin\frac{2\pi\eps^3}{-1\pm\eps^3}   &   \cos\frac{2\pi\eps^3}{-1\pm\eps^3}
\end{pmatrix},
\ee
%
%\marginpar{the second matrix is not correct, it should depend on $\alpha$?}
%
of which we can solve the multipliers by 
\[
e^{i\frac{2\pi\eps^3}{-1\pm\eps^3}},\ 
e^{-i\frac{2\pi\eps^3}{-1\pm\eps^3}},\ 
e^{i\frac{2\pi}{-1\pm\eps^3}}=e^{\pm i\frac{2\pi \eps ^3}{-1\pm\eps^3}}, \ 
e^{-i\frac{2\pi}{-1\pm\eps^3}}=e^{\mp i\frac{2\pi \eps ^3}{-1\pm\eps^3}}.
\]
%\textr{You are right, and I modified the entries of the matrix, but the eigenvalues don't change }
Estimating previous multipliers by the Taylor expansion, all of them can be estimated by $1\pm2\pi\eps^3i+\cO(\eps^6)$. 
That implies $I-D\ol\cP_0^\pm (Z^*)$ is invertible and 
\be\label{matrix-close to id}
\|(I-D\ol\cP_0 ^\pm (Z^*))^{-1}\|\leq \frac{2}{\eps^3}.
\ee
Another fact due to (\ref{section-flow}) is that 
\be\label{zero-poin-est}
\|\ol\cP_0(Z)-Z\|_{C^2}\leq M_0\eps^3,\quad\forall Z\in B(Z^*,C_{Z^*})\cap\Sigma_0,
\ee
where $B(Z^*,C_{Z^*})\subset\cD_{res}$ i a ball centered at $Z^*$ of radious $C_{Z^*}$, and $M_0$ is a constant depending on $C_{Z^*}$.
 We will use these conditions in the following computation.
%\[
%\lb_i=\eps^3i+O(\eps^6),\ -\eps^3i+O(\eps^6),\ i(1\pm\eps^3)+O(\eps^6),\ -i(1\pm\eps^3)+O(\eps^6),\quad i=1,2,3,4.
%\]
%Accordingly, the eigenvalues of $D\ol {\cP}_0(Z^*)$ should equals $e^{2\pi \lb_i}$, $i=1,2,3,4.$ That's the so called `multipliers' of the periodic orbit $\ol\gamma_0$, and can be listed as
%\[
%\eta_i=1\pm2\pi\eps^3i+O(\eps^6),\quad i=1,2,3,4.
%\]
%Since $|\eta_i-1|\geq\pi\eps^3$, so 

As $
%\wh 
H_{res}^{\dt,\eps} (\wh r,\wh R,\rho,\Upsilon,\phi,G)=\ol H_{res}^0+\cO(\mu\eps^7)+\cO(\sqrt{\dfrac{\dt}{\eps}})$ in the domain $\cD_{res}$ (see \eqref{eq:ham-res-wh} and \eqref{eq:boundDeltaHres}),  restricted to certain domain $B(Z^*,C_{Z^*})\cap\Sigma_0$, the associated Poincar\'e map $\wh\cP_\dt:\Sigma_0\rightarrow\Sigma_0$
should satisfy
\[
\wh\cP_\dt ^\pm :=\ol\cP_0^\pm +\cP_1^\pm 
\]
with 
\be\label{reminder-poincare}
\|\cP_1^\pm \|_{C^2}\leq M_1(\mu\eps^7+\sqrt{\frac{\dt}{\eps}})
\ee
for some constant $M_1=M_1(C_{Z^*},\mu)$. 
Let $\sigma=4M_1(\mu\eps^4+\sqrt\dt\eps^{-7/2})$ and we try to find a fixed point of 
$\wh\cP^\pm _\dt$ in $B(Z^*,\sigma)\cap\Sigma_0$, which is equivalent to find a point $\cZ\in B(0,\sigma)\cap\Sigma_0$, such that 
\be
\cZ=\mathscr F^\pm (\cZ):=[I-D\ol\cP_0^\pm (Z^*)]^{-1}\cdot\big[\cQ^\pm (Z^*,\cZ)+\cP_1^\pm (Z^*+\cZ)\big]
\ee 
with
\[
\cQ^\pm (Z^*,\cZ):=\ol\cP_0^\pm (Z^*+\cZ)-Z^*-D\ol\cP_0^\pm (Z^*)\cZ.
\]
Notice that by (\ref{matrix-close to id}) and (\ref{reminder-poincare}) we have
\[
|\mathscr F ^\pm (0)|\leq 2M_1 (\mu\eps^4+\sqrt{\frac{\dt}{{\eps^7}}}),
\]
and $\sigma=2|\mathscr F^\pm (0)|$. 
Due to (\ref{zero-poin-est}), there exists a constant $M_2=M_2(C_{Z^*})$ such that 
\[
\|\cQ^\pm  (Z^*,\cZ)\|_{C^2}\leq M_2\eps^3,\quad\forall \cZ\in B(0,\sigma).
\]
%If we take
%\[
%\sigma=2|\mathscr F(0)|=2[I-D\ol\cP_0(Z^*)]^{-1}\cdot \cP_1(Z^*),
%\]
%once $Lip(\mathscr F)|_{B(0,\sigma)}\leq\frac12$ holds, the Brouwer fixed point theorem indicates there must be a fixed points $\cZ\in B(0,\sigma)$ such that $\mathscr F(\cZ)=\cZ$. Indeed, 
Accordingly, we have 
\be
Lip(\mathscr F^\pm )|_{B(0,\sigma)}&\leq& [I-D\ol\cP_0^\pm (Z^*)]^{-1}   \cdot   
\big[ \|\cQ^\pm  \|_{C^2}\cdot \|\cZ\|
+\|\cP_1^\pm (Z^*+\cZ)\|_{C^1}\big]\nonumber\\
&\leq&2M_2\sigma+2M_1(\mu\eps^4+\sqrt{\frac{\dt}{{\eps^7}}})\nonumber\\
&\leq& 2M_1(4M_2+1)\cdot(\eps^4+\sqrt{\frac{\dt}{{\eps^7}}}).
\ee
The Brouwer Fixed Point Theorem implies that once
\[
Lip(\mathscr F^\pm )|_{B(0,\sigma)}=2M_1(4M_2+1)\cdot(\eps^4+\sqrt{\frac{\dt}{{\eps^7}}})\leq1/2, 
\]
there must be a fixed point of $\mathscr F^\pm $ in $B(0,\sigma)$. 
So we can take $0<\eps\leq\eps_1$ and $0<\dt\leq\dt_1\eps^7$ with
%\textr{
\be
\dt_1=\frac{1}{64M_1^2(4M_2+1)^2},\quad\quad\eps_1=\frac{1}{2\sqrt[4]{M_1(4M_2+1)}} 
\ee
%}
to ensure the Brouwer Fixed Point Theorem work. 
Accordingly, there exists $Z\in B(Z^*,\sigma)\cap\Sigma_0$, such that $\wh\cP_\dt^\pm ( Z)= Z$. 
The existence of the fixed point $ Z$ indicates the existence of a periodic orbit $\wh\gamma_{\dt,\eps}^\pm$ satisfying
\[
 \wh\gamma_{\dt,\eps}^\pm(t):=\Big\{\Big(\wh r^\pm(t),\wh R^\pm(t),\rho^\pm(t),\Upsilon^\pm(t),\phi^\pm(t),G^\pm(t)\Big)\Big\}
\]
(for different $\ol G^\pm$), of which the $3^{rd}, 4^{th}$ and $6^{th}$ inequality (\ref{est-peri-res}) holds. 
Recall that 
\[
\dot\phi\big|_{\wh\gamma_{\dt,\eps}^\pm}=-\frac{\partial
%\wh 
H_{res}^{\dt,\eps}}{\partial G}\Bigg|_{\wh\gamma_{\dt,\eps}^\pm}=1\mp\eps^3+\cO(\mu\eps^7)+\cO(\sqrt{\frac{\dt}{\eps}}),
\]
that implies the $5^{rd}$ inequality of (\ref{est-peri-res}) and the period of $\wh\gamma_{\dt,\eps}^\pm$ satisfies
\[
\wh T_{\dt,\eps}^\pm=\frac{2\pi}{1\mp\eps^3+\cO(\mu\eps^7)+\cO(\sqrt{\dfrac{\dt}{\eps}})}.
\]
%so by taking $\dt\lesssim O(\exp(-\eps^{-1})))$ we ensure $Lip(\mathscr F)|_{B(0,\sigma)}\sim O(\eps^4)\ll\frac12$, such that the fixed point of $\wh\cP_\dt$ can be $O(\eps^4)$ close to $Z^*$. 
Notice that for the $(\wh r,\wh R)-$component, using that the term $\mathcal \cO(\frac{\mu \eps ^7}{\rho^3})$ is independent of $\wh r$ and $\wh R$, we have
\[
\left\{
\begin{aligned}
\dot{\wh r}\big|_{\cD_{res}}&=\frac{\partial 
%\wh 
H_{res}^{\dt,\eps}}{\partial\wh R}=
\frac{\partial \ol H_{res}^0}{\partial\wh R}+\frac{\partial \Dt H_{res}^{\dt,\eps}}{\partial\wh R}
=\frac{\partial \ol H_{res}^0}{\partial\wh R}+\cO(\sqrt{\dfrac{\dt}{\eps}}),& \\
\dot{\wh R}\big|_{\cD_{res}}&=-\frac{\partial 
%\wh 
H_{res}^{\dt,\eps}}{\partial\wh r}
=-\frac{\partial \ol H_{res}^0}{\partial\wh r}-\frac{\partial \Dt  H_{res}^{\dt,\eps}}{\partial\wh r}
=-\frac{\partial \ol H_{res}^0}{\partial\wh r}+\cO(\sqrt{\dfrac{\dt}{\eps}}).& 
\end{aligned}
\right.
\]
That implies 
% $\dt=0$, the system 
%\be
%\wh H_{res}^0:=\wh H_{res}^{\dt=0}=-G+\eps^3\Big[\frac12(\Upsilon^2+\frac{G^2}{\rho^2})-\frac1\rho\Big]+\frac12\Big[\frac{\wh R^2}{\alpha}+\alpha\wh r^2\Big]+O(\frac{\mu\eps^7}{|\rho|^3})
%\ee is a decoupled combination of a RPC3BP and a rotator. So for the $(\wh r,\wh R)-$component, $(0,0)$ is always a fixed point, i.e. the $O(\frac{\mu\eps^7}{|\rho|^3})$ term has no influence to this rotator. So we can improve previous computation by 
\[
\|\pi_{(\wh r,\wh R)}\cP_1^\pm\|_{C^2}\leq M_1\sqrt{\dfrac{\dt}{\eps}}
\] 
in the domain $B(Z^*,C_{Z^*})\cap\Sigma_0$. Moreover, we have 
\[
\big|\pi_{(\wh r,\wh R)}\mathscr F^\pm(0)\big|\leq 2M_1 \sqrt{\frac{\dt}{{\eps^7}}}.
\]
So for $\sigma':=M_3\sqrt{\dfrac{\dt}{{\eps^7}}}$ with $M_3$ a constant depending on $M_1,M_2$ (decided later), we get 
\[
Lip\big(\pi_{(\wh r,\wh R)}\mathscr F^\pm\big)|_{B(0,\sigma')} =2M_2\sigma'+2M_1\sqrt{\frac{\dt}{{\eps^7}}}.
\] 
For $0< M_3<\dfrac{16M_1^2(4M_2+1)^2-M_1}{M_2}$, the Brouwer Fixed Point Theorem implies that 
the $(\wh r,\wh R)-$ component of the fixed point $\wt Z$ lies in $B_{\R^2}(0,\sigma')$.  
%So we can locate $(\wh r,\wh R)-$component of $\wh \gamma_{\dt,\textr\eps}$ to $B(0,\frac{M_3\sqrt\dt}{\eps^{7/2}})\subset \R^2$, with $M_3$ being a constant depending on $M_1$ and $\dt_1$. 
This leads to the first two inequalities of (\ref{est-peri-res}) then we finally proved this Proposition.
\end{proof}

\begin{rmk}
%During the transformation from system $H_{rot}^\dt$ to $\wh H_{res}^{\dt,\eps}$, there are two parameters $\dt,\eps$ involved, and we can simply assume that $0\leq\dt \lesssim O(\eps^7)\ll1$ (as the minimal demand of Proposition \ref{prop-conti-peri}). However, from the setup of the RP4BP model in Section \ref{s3}, we have to assume a stronger restriction $\dt\lesssim O(\exp(-\eps^{-1}))$. In Remark \ref{rmk-exp-small} we will explain why.

For system $\ol H_{res}^0$, we have the freedom to choose different $\ol\gamma_0^\pm$, by taking different $G-$value, see (\ref{unper-peri-orbit}). These orbits can all be continued to $\wh\gamma_{\dt,\eps}^\pm$ for system $\wh H_{res}^\dt$. However, we should alwayd keep the $G-$value independent of $\eps$ to avoid the blowup of the period $\wh T_{\dt,\eps}$ as $\eps\rightarrow 0$. Let's point out that in \cite{X},the author uses  a periodic orbit with the period of $\cO(\eps^{-1})$, which is quite different from the mechanism we assumed.
%\textcolor{green}{I have to  think a little more about that.}
\end{rmk}

  \vspace{20pt}
  %\begin{figure}[htb
%\begin{center}
%\def\svgwidth{12cm}
%\input{#.eps_tex}
%\caption{}
%\label{shadow}
%\end{center}
%\end{figure}

%\todo{Is this note white?}

\section{Oscillatory otbits in the RP3BP}\label{s3}
\vspace{20pt}

%\textcolor{green}{Jianlu, one possibility to simplify the notation could be to say here: During this section, we will fix any value of $0<\mu\le \frac12$. All the conditions will, of course, depend on the choosed value of $\mu$, but we will not write this dependence to make the  notation simpler.\\
%What do you think?}\\

This section is devoted to prove the second item of Theorem \ref{main-thm}. 
In last section we proved the existence of the comet type periodic orbits $\gamma^\pm_\dt$  for the 3BP, and claimed associated estimates on them (see Theorem \ref{rot-continue}).  
Now we add a massless Asteroid to the  previous 3BP system,  assume that the primaries move in one of these comet type periodic orbits and prove that the asteroid can have oscillatory motions.

\subsection{Setup of the RP4BP and the invariant manifold of infinity}.  \label{setup}

As one can choose any of the two  periodic orbits for the rest of the work, from now on we can just pick $\gamma_\dt^+$ which has the associated period $T_\dt^+$, and for brevity we remove the `$+$' (also for $\eps_\dt^+$). 
Recall that $\mu$ can have any  fixed value. 
Besides, due to Remark \ref{rmk:uni}, $\eps_\dt^+$ is uniformly bounded w.r.t. $\dt$, i.e. $\eps_\dt^+\sim O(1)$ as $\dt\rightarrow 0^+$.\\

Now we derive the RP4BP Hamiltonian as (\ref{ham-4bp}):
\be\label{eq:ham1}
 H_A^{\dt,\mu}(x_A,y_A,t)
&=&\frac12|y_A|^2-V_A^{\dt,\mu}(x_A,t), \\
& &\quad (x_A,y_A,t)\in\R^4\times\T, \quad 0<\mu \le \frac12,\ 0\le \dt \le \dt_0\nonumber
\ee
where $\dt_0$ is the value given in Theorem \ref{rot-continue}. 
We add the superscript `$\dt,\mu$' to indicate the dependence of $H_A^{\dt,\mu}$ about these parameters.\\

%\[
%H_A(x_A,y_A,\nu_\dt t)=\frac12|y_A|^2-\underbrace{\frac{1-\mu}{|x_A-x_S|}-\frac{\mu}{|x_A-x_J|}-\frac{\dt}{|x_A-x_P|}}_{V_A(x,\nu_\dt t)}.
%\]
Due to (\ref{exp-comb-sym-tran}) and \eqref{rot-conti-est}, the potential function of (\ref{eq:ham1}) has an explicit expression:
\be\label{potent-4bp}
V_A^{\dt,\mu}(x_A,t)&=&\frac{1-\mu}{|x_A+\mu r(t)(\cos\theta(t),\sin\theta(t))+\frac{\dt}{1+\dt}e^{i\theta(t)}q_2(t)|}\nonumber\\
& &+\frac{\mu}{|x_A-(1-\mu) r(t)(\cos\theta(t),\sin\theta(t))+\frac{\dt}{1+\dt}e^{i\theta(t)}q_2(t)|}\nonumber\\
& &+\frac{\dt}{|x_A-\frac{1}{1+\dt}e^{i\theta(t)}q_2(t)|},
\ee
Observe that $V_A^{\dt,\mu}(x_A,t)$ is $T_{\dt}-$periodic in $t$. 
%\textr{
%From now on we can just pick $\gamma_\dt^+$ which has the associated period $T_\dt^+$, and for brevity we remove the `$+$' (also for $\eps_\dt^+$).\\
%
%
%}
%\textb{
%Moreover, due to Theorem \ref{rot-continue} the following estimate holds:
%\[
%V_A(x_A,t)=\frac{1-\mu}{|x_A+\mu (\cos\theta(t),\sin\theta(t))|}+\frac{\mu}{|x_A+(\mu-1) (\cos\theta(t),\sin\theta(t))|}+O_2(\frac{\mu\dt}{\eps_{\dt,\mu}^4\|x_A\|}),
%\]
%with $0<\eps_{\dt,\mu}\leq\eps_0$ and $0<\dt\leq\dt_0$ proposed in Theorem \ref{rot-continue}. 
%}\\

As system (\ref{eq:ham1}) is non-autonomous, we can consider the augmented autonomous Hamiltonian of three degrees os freedom
%in the extended phase space, 
namely, we have 
\be\label{ext-ham}
\wt H_A^{\dt,\mu}(x_A,y_A,s,I)&:=& H_A^{\dt,\mu}(x_A,y_A,\frac{s}{\nu_\dt})+\nu_\dt I,\nonumber\\
&=&\frac12|y_A|^2-V_A^{\dt,\mu}(x_A,\frac s{\nu_\dt})+\nu_{\dt} I, \quad (x_A,y_A,s,I)\in\R^4\times\T\times\R
\ee
%where 
%\[
%V_A(x_A,\frac s\nu_\dt)=\underbrace{\frac{1-\mu}{|x_A+\mu (\cos\theta(\frac s\nu_\dt),\sin\theta(\frac s\nu_\dt))|}+\frac{\mu}{|x_A+(\mu-1) (\cos\theta(\frac s\nu_\dt),\sin\theta(\frac s\nu_\dt))|}}_{\wt{\cV}_A(x_A,\frac s\nu_\dt)}+O_2(\frac{\mu\dt}{\eps_{\dt,\mu}^4\|x_A\|}),
%\]
with $\nu_\dt=2\pi/T_{\dt}$, $s=\nu_\dt t$ and $I$ being the conjugated variable to $s$.
The benefit of doing this is that $\wt H_A^{\dt,\mu}$ becomes autonomous and $2\pi-$periodic of $s\in\T=\R/[0,2\pi]$.
%Here we add the superscript `$\dt,\mu$' to indicate the dependence of $H_A^{\dt,\mu}$ about %these parameters.
In fact, as the added action variable $I$ does not play any role in the dynamics, we will always work in the energy level $\wt H_A^{\dt,\mu}=0$ and then ``ignore'' this variable. This reduction gives the so-called $5-$dimensional extended phase space and is equivalent to  just adding the equation $\dot s=\nu_\dt$ to the Hamiltonian equations of $H_A^{\dt,\mu}$.\medskip

%\textb{
% Later we will see the system (\ref{ext-ham}) is an $O_2(\frac{\mu\dt}{\eps_{\dt,\mu}^4\|x_A\|})$ perturbation of some kind of RPC3BP system for $\dt\ll1$. Based on the existence of oscillatory orbits for the RPC3BP in \cite{GMS}, we can show the existence of the oscillatory orbits for our RP4BP system, by using the perturbative approach.}\\
Writing $\wt H_A^{\dt,\mu}$ in  polar coordinates:
%In the coordinate system, i.e.
%\[
%x=e^{i\theta(t)}q,\quad y=e^{i\theta(t)}p,
%\]
%The Hamiltonian will become
%\be
%H_{A,rot}(q,p,t)&=&\frac{|p|^2}{2}-\dot\theta(t) q\times p\nonumber\\
%& &-\frac{1-\mu}{|q+r(t)(\mu,0)+\frac{\dt}{1+\dt}q_2(t)|}-\frac{\mu}{|q-r(t)(1-\mu,0)+\frac{\dt}{1+\dt}q_2(t)|}-\frac{\dt}{|q-\frac{1}{1+\dt}q_2(t)|}\nonumber\\
%&=&\frac{|p|^2}{2}- q\times p-\frac{1-\mu}{|q+(\mu,0)|}-\frac{\mu}{|q-(1-\mu,0)|}+V_A(q,p,t)
%\ee
%with 
%\be
%V_A(q,p,t)&=&(1-\mu)\Big[\frac{1}{|q+(\mu,0)|}-\frac{1}{|q+(\mu,0)+\sqrt\dt\wt r(t)(\mu,0)+\frac{\dt}{1+\dt}q_2(t)|}\Big]\nonumber\\
%& &+\mu\Big[\frac{1}{|x+(\mu-1,0)|}-\frac{1}{|q+(\mu-1,0)+\sqrt\dt\wt r(t)(\mu-1,0)+\frac{\dt}{1+\dt}q_2(t)|}\Big]\nonumber\\
%& &-\frac{\dt}{|q-\frac{1}{1+\dt}q_2(t)|}+\frac{2\alpha\sqrt\dt\wt r(t)+\dt G(t)+\alpha\dt\wt r^2(t)}{\alpha(1+\sqrt\dt\wt r(t))^2}q\times p.\nonumber
%\ee 
%and
%\[
%\wt V_A(q,p,t)=2\wt r(t) \cdot q\times p+\alpha\wt r(t)\Big[\frac{\pi_1(q+(\mu,0))}{|q+(\mu,0)|^3}-\frac{\pi_1(q+(\mu-1,0))}{|q+(\mu-1,0)|^3}\Big].
%\]
%
%We can take $\eps\ll1$ with the period of $\gamma_\dt$ being $\eps^{-1}$ and take $\dt\ll \eps^4$ such that 
%$2\wt r(t) \cdot q\times p$ dominates $\wt V_A(q,p,t)$.\\

%\[\nexists
%(x_A,y_A)\xrightarrow{\Phi_{pol}}(\xi,\psi,\Xi,\Psi)
%\]
%via
\[
 \left\{
\begin{split}
x_A&=(\xi\cos\psi, \xi\sin\psi),\\
y_A&=(\Xi\cos\psi-\frac{\Psi}{\xi}\sin\psi, \Xi\sin\psi+\frac{\Psi}{\xi}\cos\psi)
\end{split}
\right.
\]
we obtain
\be
\wh {H}_{A}^{\dt,\mu}(\xi,\psi,\Xi,\Psi, s,I)&=&\nu_\dt I+\frac12(\Xi^2+\frac{\Psi^2}{\xi^2})-\frac{1+\dt}{\xi}-\wh{\Dt V}_A^{\dt,\mu}(\xi,\psi, \frac s{\nu_\dt}),
\ee
%\be
%H_{A,rot}(\xi,\psi,\Xi,\Psi,t)&=&\frac12(\Xi^2+\frac{\Psi^2}{\xi^2})-\Psi-\frac{1-\mu}{\sqrt{\xi^2+2\xi\mu\cos\psi+\mu^2}}\nonumber\\
%& &-\frac{\mu}{\sqrt{\xi^2-2(1-\mu)\xi\cos\psi+(1-\mu)^2}}+V_A(\xi,\psi,\Psi,t),
%\ee
where, by \eqref{rot-conti-est},
\be\label{incre-pote}
\wh{\Dt V}_A^{\dt,\mu}(\xi,\psi, \frac s{\nu_\dt}):=V_A^{\dt,\mu}(\xi e^{i\psi},\frac s{\nu_\dt})-\frac{1+\dt}{\xi}\sim \cO(\frac {\mu+\dt}{\xi^3}).
\ee

%\textcolor{green}{
%Here in these first estimates you put this dependence on $\eps_{\dt,\mu}$ but then we never use them, I'm right?
%}
%\textr{Tere, you are right the $\mu$ in $\eps_{\dt,\mu}$ is now redundant, and by adding a Remark \ref{rmk:uni}, $\eps_{\dt,\mu}\sim O(1)$ will be certain}
%\textcolor{green}{
%I still see lots of $\eps_{\dt}$}
%\textcolor{green}{
%I have written $\mu(1-\mu)$ in the error terms, but whhat iis important is that 
%I wonder if it is necessary to write the dependence in $\eps_{\dt,\mu}$
%}

%and
%\[
%\wt V_A(\xi,\psi,\Psi,t)=2\wt r(t)\Psi+\mu(1-\mu)\wt r(t)\Big\{\frac{\xi\cos\psi+\mu}{\big[\xi^2+2\mu\xi\cos\psi+\mu^2\big]^{3/2}}+\frac{\xi\cos\psi+\mu-1}{\big[\xi^2-2(1-\mu)\xi\cos\psi+(1-\mu)^2\big]^{3/2}}\Big\}
%\]

 For $\xi\gg1$, we consider the McGehee transformation by setting $\xi=2/x^2$ with $x>0$, then 
\[
d\xi\wedge d\Xi+d\psi\wedge d\Psi=-4x^{-3}dx\wedge d\Xi+d\psi\wedge d\Psi.
\]
That means the Hamiltonian in the new variables is 
\be\label{ham-4bp-polar}
\wt \cH_{A}^{\dt,\mu}(x,\psi,\Xi,\Psi,s,I)=\nu_\dt I+\frac12(\Xi^2+\frac{\Psi^2x^4}4)-\frac{(1+\dt)x^2}2-\wt {\Dt V_A}^{\dt,\mu}
(x,\psi,s)
\ee
with 
\be\label{distance-potent}
\wt {\Dt V_A}^{\dt,\mu}(x,\psi,s):=\wh{\Dt V_A}^{\dt,\mu}(2/x^2,\psi, \frac s{\nu_\dt}) \sim \cO((\mu+\dt) x^6)
\ee
and the associated ODE is 
\[
\dot x=\frac{-x^3}{4}\partial_\Xi\wt \cH_A^{\dt,\mu},
\quad
\dot \Xi=\frac{x^3}{4}\partial_x\wt \cH_A^{\dt,\mu},\quad
 \dot \psi=\partial_\Psi\wt \cH_A^{\dt,\mu},
\quad 
\dot \Psi=-\partial_\psi\wt \cH_A^{\dt,\mu},\quad\dot s=\nu_\dt.
\]
Using the form of the potential in \eqref{distance-potent}, we can estimate previous ODE by
\be\label{eq-infity}
\left\{
\begin{aligned}
\dot x&=-\frac14 x^3\Xi,& \\
\dot\Xi&=-\frac{1+\dt}{4}x^4+\cO((\mu+\dt) x^6),& \\
\dot\psi&=\frac{1}{4}\Psi x^4,& \\
\dot\Psi&= x^6\beta^{\dt,\mu}(\psi,s)+\cO((\mu+\dt)x^8 ),& \\
%O(\frac{(\mu+\dt) x^6}{\eps_{\dt,\mu}^4}),& \\
\dot s&=\nu_\dt.&
\end{aligned}
\right.
\ee
where $\beta^{\dt,\mu}(\psi,s)$ is a periodic function defined on 
$\T\times\T$ and $\beta^{\dt,\mu}\sim \cO(\mu+\dt)$.

% Although $\dt\ll1$ is proposed in Theorem \ref{rot-continue}, we can %always take $|x|\ll\dt $ smaller so previous $O(\frac{\mu\dt}{\eps_{\dt,%\mu}^4})$ is actually a $O(1)$ constant, at least throughout this %section. 
%Nonetheless, this issue is not crucial with the local behavior of the %ODE, so we can notate it by $O_0$ for brevity. \\

In view of this, the ``parabolic infinity'' $x=\Xi=0$  is foliated by the parabolic periodic orbits
\[
\wt\Lb_{\psi_0,\Psi_0}^{\dt,\mu}:=\Big\{(x,\psi,\Xi,\Psi,s)\in\R\times\T\times\R\times\R\times\T \,  \Big| \ x=\Xi=0,\ \psi=\psi_0,\ \Psi=\Psi_0\Big\}.
\]
Besides, as $\dot s= \nu_\dt=\frac{2\pi}{T_{\dt}}$,
for any fixed $\psi_0$ and $\Psi_0$, if we denote by
$\wt\phi_t^{\dt,\mu}$  the flow of the equation \eqref{eq-infity}, we have that:
\[
\wt\phi_t^{\dt,\mu}(0,\psi_0,0,\Psi_0,s)=(0,\psi_0,0,\Psi_0,s+\nu_\dt t)
\]
therefore,  $\wt\Lb_{\psi_0,\Psi_0}^{\dt,\mu}$ is a $T_{\dt}^+$-periodic orbit.
\\

Even if these periodic orbits are parabolic, next theorem gives that they have  stable (resp. unstable) 2-dimensional
invariant manifolds $W^s(\wt\Lb_{\psi_0,\Psi_0}^{\dt,\mu})$ (resp. $W^u(\wt\Lb_{\psi_0,\Psi_0}^{\dt,\mu})$).

%\begin{thm}\cite{GMSS}\label{stable-mani-thm}
%Fix any 
%%Let $\wt\phi_t^{\dt,\mu}$ be the flow of the equation \eqref{eq-infity} and 
%$(\psi_0,\Psi_0)\in\T\times\R$. 
%%
%There exists $\rho_0\in\R_+$ and $0< \dt_0^*\leq\dt_0$, where $\dt_0$ is given in Theorem \ref{rot-continue}, such that for any $0<\rho<\rho_0$, $0\leq\dt<\dt_0^*$, the local stable set 
%\be
%W^s(\wt\Lb_{\psi_0,\Psi_0}^{\dt,\mu})&:=&
%\Big\{(x,\psi,\Xi,\Psi,s)\in\R\times\T\times\R\times\R\times\T\Big|\ 
%\pi_x\wt\phi_t^{\dt,\mu}(x,\psi,\Xi,\Psi, s)>0,\nonumber\\
%& &\big|\pi_{(x,\Xi)}\wt\phi_t^{\dt,\mu}(x,\psi,\Xi,\Psi, s)\big|<\rho, 
%\lim_{t\rightarrow+\infty}\rm{dist}(\wt\phi_t^{\dt,\mu}(x,\psi,\Xi,\Psi, s),\wt\Lb_{\psi_0,\Psi_0}^{\dt,\mu})=0\Big\}\nonumber
%\ee 
%is a 2-dimensional manifold. 
%Moreover, there exists $u_0>0$ such that $W^s(\wt\Lb_{\psi_0,\Psi_0}^{\dt,\mu})$ admits a $C^\infty$ parametrization
%\[
%\gamma^s_{\psi_0,\Psi_0}:[0, u_0)\times\T\rightarrow \R\times\T\times\R\times\R\times\T, 
%\] with $\gamma^s_{\psi_0,\Psi_0}(0,s)=(0,\psi_0,0,\Psi_0,s)$, analytic in $(0, u_0)\times\T$. Moreover, $\gamma^s_{\psi_0,\Psi_0}$ depends analytically on $(\psi_0,\Psi_0,\dt)\in\T\times\R
%\times[0,\dt_0^*)$. The analogous result for the (local) unstable set $W^u(\wt\Lb_{\psi_0,\Psi_0}^{\dt,\mu})$ holds as well.
%\end{thm}

\begin{thm}\cite{GMSS}\label{stable-mani-thm}
Fix any 
$(\psi_0,\Psi_0)\in\T\times\R$. 
There exists $\rho_0\in\R_+$ and $0< \dt_0^*\leq\dt_0$, where $\dt_0$ is given in Theorem \ref{rot-continue}, such that for any $0<\rho<\rho_0$, $0\leq\dt<\dt_0^*$, the local stable set 
\be
W^s(\wt\Lb_{\psi_0,\Psi_0}^{\dt,\mu})&:=&
\Big\{(x,\psi,\Xi,\Psi,s)\in\R\times\T\times\R\times\R\times\T\Big|\ 
\pi_x\wt\phi_t^{\dt,\mu}(x,\psi,\Xi,\Psi, s)>0,\nonumber\\
& &\big|\pi_{(x,\Xi)}\wt\phi_t^{\dt,\mu}(x,\psi,\Xi,\Psi, s)\big|<\rho, 
\lim_{t\rightarrow+\infty}\rm{dist}(\wt\phi_t^{\dt,\mu}(x,\psi,\Xi,\Psi, s),\wt\Lb_{\psi_0,\Psi_0}^{\dt,\mu})=0\Big\}\nonumber
\ee 
is a 2-dimensional $C^\infty$   manifold. 
Moreover, it is analytic for $x>0$ and   depends analytically on $(\psi_0,\Psi_0,\dt)\in\T\times\R$.
The analogous result for the (local) unstable set $W^u(\wt\Lb_{\psi_0,\Psi_0}^{\dt,\mu})$ holds as well.
\end{thm}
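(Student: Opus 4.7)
The plan is to invoke the parabolic invariant manifold machinery of \cite{GMSS} applied to the McGehee form \eqref{eq-infity}. Fixing $(\psi_0,\Psi_0)\in\T\times\R$, the periodic orbit $\wt\Lb_{\psi_0,\Psi_0}^{\dt,\mu}$ is a circle in the extended phase space lying inside the invariant ``manifold of infinity'' $\{x=\Xi=0\}$. I would realize $W^s(\wt\Lb_{\psi_0,\Psi_0}^{\dt,\mu})$ as a graph
\[
\Xi=\Xi^*(x,s),\quad \psi=\psi_0+\psi^*(x,s),\quad \Psi=\Psi_0+\Psi^*(x,s),
\]
over $(x,s)\in(0,\rho)\times\T$, with the three unknowns (jointly analytic in $(\psi_0,\Psi_0,\dt)$) vanishing at prescribed polynomial rates as $x\to 0^+$.

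First I would solve the zeroth-order problem: dropping $\wt{\Dt V_A}^{\dt,\mu}$ in \eqref{eq-infity} decouples $(\psi,\Psi)$ from $(x,\Xi)$, and the resulting planar parabolic Kepler system $\dot x=-x^3\Xi/4$, $\dot\Xi=-(1+\dt)x^4/4$ has an explicit analytic stable branch $\Xi=\Xi_0(x;\Psi_0)$, read off from the parabolic energy relation, along which $x(t)\sim c\,t^{-1/2}$ and $\Xi(t)\sim c\,t^{-1/2}$. These are the classical parabolic asymptotics of the Kepler problem in McGehee coordinates and they provide the zeroth-order graph about which to perturb.

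For the full system I would introduce a time reparametrization $d\tau/dt=x^2$, under which $x$ is nearly linear in $\tau$ along stable trajectories, and set up the invariance equation for $(\Xi^*-\Xi_0,\psi^*,\Psi^*)$ as a fixed point equation in a Banach space of functions analytic in $x$ for $x>0$, periodic analytic in $s\in\T$ and analytic in $(\psi_0,\Psi_0,\dt)$, with weighted $\cO(x^k)$ bounds as $x\to 0^+$. The crucial input is that by \eqref{distance-potent} the correction $\wt{\Dt V_A}^{\dt,\mu}=\cO((\mu+\dt)x^6)$, so the terms of \eqref{eq-infity} that couple $(\psi,\Psi)$ to $(x,\Xi)$ carry high powers of $x$ (namely $x^6$ and $x^8$); after the rescaling, this extra gain dominates the loss of $x$-powers incurred by the associated integral operator and yields contraction with a small Lipschitz constant. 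The $C^\infty$ regularity at $x=0$ is then obtained by running the same contraction for the equations satisfied by successive $x$-derivatives in suitably weighted spaces, while joint analyticity in $(x,s,\psi_0,\Psi_0,\dt)$ for $x>0$ follows from the analytic implicit function theorem applied to the fixed point.

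The main obstacle is the absence of a spectral gap at $\wt\Lb_{\psi_0,\Psi_0}^{\dt,\mu}$: the linearization of the flow of \eqref{eq-infity} along the periodic orbit is nilpotent, so the classical hyperbolic stable manifold theorem cannot be invoked. The workaround, following McGehee's original treatment of the Kepler problem and the general parabolic manifold theorem of \cite{GMSS}, is to exploit the explicit polynomial normal form of \eqref{eq-infity} together with the high-order vanishing of $\wt{\Dt V_A}^{\dt,\mu}$ at $x=0$ in order to recover a contraction in weighted function spaces tailored to the parabolic decay rate $x\sim t^{-1/2}$. The unstable manifold $W^u(\wt\Lb_{\psi_0,\Psi_0}^{\dt,\mu})$ is constructed identically with time reversed, replacing the branch $\Xi=\Xi_0$ by $\Xi=-\Xi_0$.
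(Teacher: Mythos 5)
Your sketch is in the right spirit but proceeds by a genuinely different route than the paper. The paper does not re-derive the parabolic invariant manifold theorem in the original variables: it performs the explicit change of coordinates $q=\tfrac12(\sqrt{1+\dt}\,x-\Xi)$, $p=\tfrac12(\sqrt{1+\dt}\,x+\Xi)$, $\alpha=(1+\dt)\psi+\Psi\Xi$ — which straightens the parabolic separatrix $\Xi=\sqrt{1+\dt}\,x$ of the truncated $(x,\Xi)$-system into $\{q=0\}$ and simultaneously normalizes the angular equation — after which \eqref{eq-infity} takes exactly the normal form (14) of \cite{GMSS}, and Proposition 3 of that paper is invoked as a black box. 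You instead re-run the graph-transform/fixed-point construction that underlies that proposition directly in $(x,\Xi,\psi,\Psi,s)$. That is more work, but it does make visible the key structural input (the $\cO((\mu+\dt)x^6)$ vanishing of $\wt{\Dt V_A}^{\dt,\mu}$ furnishing the contraction in weighted spaces), and avoids having to guess the somewhat unobvious $\alpha$-change. Two numerical details in your sketch are off and would need fixing before the estimates could be set up: on the parabolic branch $\dot x\approx-\tfrac{\sqrt{1+\dt}}{4}x^4$, so $x(t)\sim c\,t^{-1/3}$, not $t^{-1/2}$; and the reparametrization $d\tau/dt=x^2$ gives $dx/d\tau\sim-cx^2$, which is still degenerate — you need $d\tau/dt\sim x^3$ to render $x$ linear in the new time. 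Neither affects the soundness of the overall plan.
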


The proof of this theorem is  analogous to the one of Theorem $2.1$ in \cite{GMSS}. 
Observe that if we make the following change of variables 
\begin{equation}\label{eq:changepq}
q=\frac12(\sqrt{1+\dt}x-\Xi),\quad p=\frac12(\sqrt{1+\dt} x+\Xi),
\quad \alpha=(1+\dt) \psi+ \Psi \Xi,
\end{equation}
system \eqref{eq-infity}  becomes
\begin{equation}\label{eq:systempq}
 \left\{
\begin{split}
\dot q&=\frac{(q+p)^3}{4(1+\dt)}\Big(q+(q+p)^3 \cO_0\Big),\\
\dot p&=-\frac{(q+p)^3}{4(1+\dt)}\Big(p+(q+p)^3 \cO_0\Big),\\
\dot\alpha &=(q+p)^6\cO_0,\\
\dot\Psi&=(q+p)^6\cO_0,\\
\dot s&=\nu_\dt.
\end{split}
\right.
\end{equation}
This system has the form of system (14) in \cite{GMSS}. 
Therefore, Proposition 3 in that paper can be applied  giving  the existence and regularity  of the stable and unstable manifolds of the sets 
$$
%\wt\varLambda_{\psi_0,\Psi_0}^{\dt,\mu}:=
\{ q=p=0, \ \alpha=\alpha_0, \ \Psi=\Psi_0, \ s\in \T\}
$$
for any $\alpha_0$, $\Psi_0$. 
Going back to variables $(x, \Xi,\psi,\Psi)$ we obtain the stable and unstable manifolds of 
$\wt\Lb_{\psi_0,\Psi_0}^{\dt,\mu}$. 
%
%
%We postpone the proof of this theorem after the Proposition \ref{shad-prop} in Section \ref{s4}. (the technical details refer to Appendix \ref{B}). A related result concerning the invariant manifolds of infinity is given in \cite{Ro1} (see also \cite{Ro2}). For the consistency of this paper we exhibit our own proof based on the parameterization method in \cite{CFdlL, BFdlLM}, for the purpose to provide the regularity of the stable (resp. unstable) foliations of the invariant manifolds, which is needed in Lemma \ref{lam-lem}.\\
\\

As is shown in Theorem \ref{stable-mani-thm}, the points which tend asymptotically in forward (resp.  backward) time
to the periodic orbit $\wt\Lb_{\psi_0,\Psi_0}^{\dt,\mu}$ form a $2-$dimensional manifold $W^s(\wt\Lb_{\psi_0,\Psi_0}^{\dt,\mu})$ (resp. $W^u(\wt\Lb_{\psi_0,\Psi_0}^{\dt,\mu})$). 
The fact that the periodic orbit $\wt\Lb_{\psi_0,\Psi_0}^{\dt,\mu}$ is not hyperbolic but parabolic, makes its invariant manifold $W^s(\wt\Lb_{\psi_0,\Psi_0}^{\dt,\mu})$ (resp. $W^u(\wt\Lb_{\psi_0,\Psi_0}^{\dt,\mu})$) to be only $C^\infty$ at $\wt\Lb_{\psi_0,\Psi_0}^{\dt,\mu}$, although analytic at any other point and also analytic respect to $\psi_0,\Psi_0,\dt$.\\

When $\dt=\mu=0$, system (\ref{ham-4bp-polar}) becomes the Kepler system which is totally integrable. 
Therefore, the associated invariant manifolds $W^{u,s}(\wt\Lb_{\psi_0,\Psi_0}^{0,0})$ coincide and form a two-parameter family of parabolas in the configuration space.
Indeed, for $\dt=\mu=0$, as by \eqref{eq:period} $\nu_\dt =1/q$  our extended Hamiltonian \eqref{ham-4bp-polar} is the following:
\be
\wt\cH_A^{0,0}(x,\psi,\Xi,\Psi,s,I)= I/q+\frac12(\Xi^2+\frac{\Psi^2x^4}4)-\frac{x^2}2, 
\ee
so $I$ is a first integral and $\dot s=\nu_{\dt=0}=1/q$ becomes a free equation independent of the motion for the rest variables. Therefore, we can still get formulas of the homoclinic manifolds as in \cite{GMS}, and we exhibit them here:
%
%\begin{lem}\cite{LS}\label{3bp-para-lem}
%For $\mu=\dt=0$ and fixed $\psi_0\in\T$, $\Psi_0\neq 0$, the system (\ref{ham-4bp-polar}) has a family of homoclinic orbits to the periodic orbit $\wt\Lb_{\psi_0,\Psi_0}$, which can be expressed by 
\begin{equation}\label{3bp-para-lem}
\left\{
\begin{aligned}
x_h(t,\Psi_0)&=\frac{2}{\Psi_0\sqrt{1+\tau^2}},& \\
\Xi_h(t,\Psi_0)&=\frac{2\tau}{\Psi_0(1+\tau^2)},& \\
\psi_h(t,\Psi_0)&=\psi_0+\alpha_h(t), \quad \alpha_h(t)=2\arctan\tau,& \\
\Psi_h(t,\Psi_0)&=\Psi_0,& \\
s_h(t,s_0)&=s_0+ t/q& \\
\end{aligned}
\right.
\end{equation}
where $s_0\in\T$ is a free parameter and $\tau$ is a parametrization of $t$ through
\[
t=\frac{\Psi_0^3}{2}\big(\tau+\frac{\tau^3}3\big).
\]
%\end{lem}
%Since system (\ref{ham-4bp-polar}) is a further perturbation of the RPC3BP for $\dt\ll1$ sufficiently small, this Lemma is needed in estimating the splitting of the invariant manifolds $W^{u,s}(\wt\Lb_{\psi_0,\Psi_0})$, as is done in \cite{GMS}.

\subsection{The case $\dt=0$: The RPC3BP}\label{s3s1}
 Through this subsection we assume that $\dt=0$ but $\mu\neq0$. 
 %\textcolor{blue}{
 Notice that the primaries S-J-P form a RPC3BP of which we can find a periodic orbit $\gamma_*^+$ with the period $T_*^+=2\pi q$ (see Remark \ref{rmk:uni}). 
Besides,  since P has no attraction to A, we have a new RPC3BP of the system S-J-A.
%}  
% \textr{Notice that the primaries S-J-P form a RPC3BP of which we can find a periodic orbit $
%\gamma_*^+$ with the period $T_*^+=2\pi q$, see Remark \ref{rmk:uni} for more details. %Precisely, in the rotational coordinate $\wt H_{res}^{\dt=0}(\wt r,\wt R,q_2,v_2)$ in 
%(\ref{eq:Hres}) dominates the motion of P with fixed $S=(-\mu,0)$ and $J=(1-\mu,0)$, even though P is massless. On the other side, S-J-A form another RPC3BP, since P has no attraction to A.}
%
Notice that $\nu_{\dt=0}=1/q$, then due to (\ref{theta-eq}) we have that the extended Hamiltonian \eqref{ham-4bp-polar} becomes (see \eqref{distance-potent}, \eqref{incre-pote} \eqref{potent-4bp}):
\be\label{dt=0-ham}
\wt H_{A}^{0,\mu}(x,\psi,\Xi,\Psi,s,I)&=& 
\frac Iq +\frac12(\Xi^2+\frac{\Psi^2x^4}{4})-\frac{x^2}{2}
-\wt V_A^{0,\mu}(x,\psi,s)\nonumber\\
&=&
\frac Iq +\frac12(\Xi^2+\frac{\Psi^2x^4}{4})-\frac{x^2}{2}\nonumber\\
& &-\underbrace{\Big[\frac{1-\mu}{| \frac2{x^2}e^{i\psi}+\mu (\cos qs,\sin qs)|}+\frac{\mu}{|\frac{2}{x^2} e^{i\psi}+(\mu-1) (\cos qs,\sin qs)|}-\frac{x^2}{2}\Big]}_{\wt V_A(x,\psi,s,\dt=0)}\nonumber\\
&=& \frac Iq +\frac12(\Xi^2+\frac{\Psi^2x^4}{4})-\frac{1-\mu}{|\frac2{x^2} e^{i(\psi-qs)}+(\mu,0)|}-\frac{\mu}{|\frac2{x^2} e^{i(\psi-qs)}+(\mu-1,0)|}.
\ee
This is the Hamiltonian of the RPC3BP, and as it is well known,  $\wt V_A^{0,\mu}(x,\psi,s)$ is a function of $x$ and $\psi-qs$. This is reflected in the fact that the system has a first integral, 
%the Jacobi constant $J=H+q\Psi$.
\[
\frac12(\Xi^2+\frac{\Psi^2x^4}{4})-\Psi-\frac{1-\mu}{|\frac2{x^2} e^{i(\psi-qs)}+(\mu,0)|}-\frac{\mu}{|\frac2{x^2} e^{i(\psi-qs)}+(\mu-1,0)|},
\]
which is actually the Jacobi constant.
%\textr{P moves on a nearly circular orbit around S-J, with the mass zero. the period J around S is $2\pi$, but the period of P around S-J is $2\pi q$, it's not a $q-$turn since in the RPC3BP S-J are both fixed. So P moves just one turn, with period $2\pi q$.}
%
%
%\textcolor{green}{Sorry Jianlu, but I would like a short sentence just explaining why for $\dt=0$ we get the RPC3BP but the primaries PJ give  $q$ turns. Something like that, which explains why we obtain $\cos qs'$ etc, but I don't find the sentence, is something like we get $q$ copies?? (changing $t=q s$ and $J=I/q$ we get the ``classical RP3BP''. But if we don't find the correct sentence better don't  put anything!!! }\textr{Tere, the blue part is meant to explain this, but it seems not successful. There is only a geometric explanation: although P has no attraction to A, but P is not neglectable. In $\R^2$ all 4 bodies should be considered. Suppose $\cP$ is the Poincar\'e map, then $\cP^q$ will make S-J-P all be equilibriums. But for $\cP$, the position of P actually changes. So $\cP$ can be interpreted as parameterized, w.r.t. the location of P. It doesn't explicitly present in the equation of A, but it do reflects as a different configuration in $\R^2$.} \textb{Hope this give you some idea about the sentence, if not, I prefer not to put anything.}
%\textr{(see above)}

Now gathering all the periodic orbits $\wt\Lb_{\psi_0,\Psi_0}^{0,\mu}$ with $\Psi_0 $ greater than a given $\Psi_1>0$, we get an invariant set 
\[
\wt\Lb_{[\Psi_1,+\infty)}^{0,\mu}:=\bigcup_{\psi_0\in\T,\Psi_0\geq \Psi_1}\wt\Lb_{\psi_0,\Psi_0}^{0,\mu}
\]
which is a {\it normally parabolic} 3-dimensional invariant manifold. 
The associated 4-dimensional stable (resp. unstable) manifolds can be defined by 
\[
W^\varsigma(\wt\Lb_{[\Psi_1,+\infty)}^{0,\mu}):=\bigcup_{\psi_0\in\T,\Psi_0\geq \Psi_1}W^\varsigma(\wt\Lb_{\psi_0,\Psi_0}^{0,\mu}),\quad\varsigma=u,\ s.
\]
Theorem 2.2 of \cite{GMS} implies that, when $\dt=0$ and $\mu\in(0,1/2]$, there exists $\Psi^*\gg1$ such that for any $\Psi_1\geq \Psi^*$, the invariant manifolds $W^s(\wt\Lb_{[\Psi_1,+\infty)}^{0,\mu})$ and $W^u(\wt\Lb_{[\Psi_1,+\infty)}^{0,\mu})$ intersect transversally in the whole $5-$dimensional space along two different $3-$dimensional homoclinic manifolds $\tilde \Gamma^\pm_0$.

More concretely,  consider the {\it Poincar\'e function}
% for $\dt=0$, i.e.
\be\label{poincare-fun}
L^{0}(\psi_0,\Psi_0, s_0,\sigma):=\int_{-\infty}^{+\infty}\wt V_A^{0,\mu}(x_h(\sigma+t,\Psi_0), \psi_0+\alpha_h (\sigma+t,\Psi_0), s_0+t/q)dt
\ee
where 
%$\wt V_A^{\dt,\mu}$ is the potential defined in (\ref{ham-4bp-polar})  and 
$(x_h,\alpha_h)$ are components of the parameterization of the unperturbed separatrix given in \eqref{3bp-para-lem}. 

Using that the potential $\wt V_A^{0,\mu}(x,\psi,s)$ is a function of $x$ and $\psi-qs$ and changing the variables to $r=\sigma+t$ in the integral, one easily obtains that the potential $L^0$ satisfies: 
%As for the $L^0(\psi_0,\Psi_0,s_0,\sigma)$, it should satisfy the following 
\[
L^0(\psi_0,\Psi_0,s_0,\sigma)=L^0(\psi_0-qs_0,\Psi_0,0,\sigma)=L^0(\psi_0-qs_0+\sigma,\Psi_0,0,0).
\]
%which is shown in \cite{GMS}. 
Besides, \cite{DKRS} and \cite{GMS} also show that the Fourier expansion of $L^0(\psi_0-qs_0+\sigma,\Psi_0,0,0)$ contains only cosines of $\psi_0-qs_0+\sigma$, so,  for any $(\psi_0,s_0)\in\T^2$ , we can easily solve two critical points $\sigma^*_\pm\in\T$ given by 
\[
\sigma^*_-=qs_0-\psi_0, \quad \sigma^*_+=\pi+qs_0-\psi_0.
\]
%This fact is almost the same with Proposition 3.1 of \cite{GMS}, with necessary variable rescaling. %Another fact is that %
%
%
%$L(\psi_0,\Psi_0, s_0,\sigma,0)$ depends on $\psi_0-s_0+\sigma$ as an entity. We can see this from the integral (\ref{poincare-fun}). Besides, the Appendix of \cite{GMS} proved that the Fourier expansion of $L(\psi_0,\Psi_0, s_0,\sigma,0)$ contains only cosines. \\

The results in  \cite{GMS}, give that, for any $ 0< \mu \le \frac12$, if 
%$\Psi_0 \ge \Psi_1\ge \Psi^*$
$\Psi_1\ge \Psi^*$ big enough, associated to the  zeros $\sigma^*_\pm$, there exist two transversal  intersections
%$\wt\Gamma_\pm^{\psi_0,\Psi_0,s_0}\subset W^s(\wt\Lb_{\psi_0,\Psi_0}^{0,\mu})\pitchfork W^s(\wt\Lb_{\psi_0,\Psi_0}^{0,\mu})$ 
%to the  periodic orbit 
%$\wt\Lb_{\psi_0,\Psi_0}^{0,\mu}\subset \wt\Lb_{[\Psi_1,+\infty)}^{0,\mu}$, 
%which satisfy 
%\[
%\wt\Gamma_\pm^{\psi_0,\Psi_0,s_0}(t)=(x_h(\sigma_\pm^*+t,\Psi_0), \Xi_h(\sigma_\pm^*+t,\Psi_0),\psi_0+\wt\alpha_h(\sigma_\pm^*+t,\Psi_0),\Psi_0, s_0+t/q)+
%\mathcal{O}(\mu\Psi_0^{-3}).
%\]
%In fact, the error for the last two components $(\psi, \Psi)$ is of order $\mathcal{O}(\mu \Psi^{-4})$ but this does not play any role in the present arguments.\\
%
%This is proved in \cite{GMS}.
% 
%
%The transversal intersection between the stable and unstable manifolds of  the periodic orbits $%\wt\Lb_{\psi_0,\Psi_0}^{0,\mu}$ for  any $(\psi_0,s_0)\in\T^2$ and  $\Psi_0\ge \Psi_1$ gives the transversal intersection 
between $W^s(\wt\Lb_{[\Psi_1,+\infty)}^{0,\mu})$ and $W^u(\wt\Lb_{[\Psi_1,+\infty)}^{0,\mu})$ along two homoclinic manifolds 
%Following \cite{DLS2}, this transversality allows us to define two scattering maps $\wt\cS_0^\pm$ %associated to the two different transversal homoclinic intersections between 
 %Note that these heteroclinic orbits are well defined for any $(\psi_0,s_0)\in\T^2$ and any $\Psi_0\geq \Psi_1$. 
%More concretely, we can find two homoclinic channels
 $\wt \Gamma_0^\pm$ 
 %associated to $ \wt\Lb_{[\Psi_1,+\infty)}^{0,\mu}$, that is, submanifolds diffeomorphic to $\wt\Lb_{[\Psi_1,+\infty)}^{0,\mu}$, which satisfy 
\[
\wt \Gamma_0^\pm \subset W^s(\wt\Lb_{[\Psi_1,+\infty)}^{0,\mu})\pitchfork W^u(\wt\Lb_{[\Psi_1,+\infty)}^{0,\mu})
\]
In fact, one can easily see that $\wt \Gamma_0^\pm $ are submanifolds diffeomorphic to $\wt\Lb_{[\Psi_1,+\infty)}^{0,\mu}$, which also satisfy 
\[
T_zW^s(\wt\Lb_{[\Psi_1,+\infty)}^{0,\mu})\bigcap T_zW^u(\wt\Lb_{[\Psi_1,+\infty)}^{0,\mu})=T_z\Gamma_0^\pm,\quad\forall z  \in  \wt \Gamma_0^\pm.
\]
and therefore, following \cite{DLS2},  we call them homoclinic chanels.

%\textcolor{green}{
%Whats follows now I think it is not necessary, if you agree we can delete it
%}
%\textcolor{blue}{
%Moreover, \cite{GMS} also shows that associated to the  zeros 
%$\sigma^*_\pm$, 
%there exist two heteroclinic orbits
%$\wt\Gamma_\pm^{\psi_0,\Psi_0,s_0}$
%%\subset W^s(\wt\Lb_{\psi_0,\Psi_0}^{0,\mu})\cap  W^s(\wt\Lb_{\psi_0,\Psi_0}^{0,\mu})$ 
%between two periodic orbits in $\wt\Lb_{[\Psi_1,+\infty)}^{0,\mu}$, 
%which are  
%$\mathcal{O}(\mu \Psi^{-4})$ close to the periodic orbit $\wt\Lb_{\psi_0,\Psi_0}^{0,\mu}$
%%\subset \wt\Lb_{[\Psi_1,+\infty)}^{0,\mu}$, 
%which satisfy 
%\[
%\wt\Gamma_\pm^{\psi_0,\Psi_0,s_0}(t)=(x_h(\sigma_\pm^*+t,\Psi_0), \Xi_h(\sigma_\pm^*+t,\Psi_0),\psi_0+\wt\alpha_h(\sigma_\pm^*+t,\Psi_0),\Psi_0, s_0+t/q)+
%\mathcal{O}(\mu\Psi_0^{-3}).
%\]
%In fact, the error for the last two components $(\psi, \Psi)$ is of order $\mathcal{O}(\mu \Psi_0^{-4})$ but this does not play any role in the present arguments.
%}

Associated to each of these channels $\wt \Gamma_0^\pm $, we can define global scattering maps
\[
\wt\cS_0^\pm: \wt\Lb_{[\Psi_1,+\infty)}^{0,\mu}\rightarrow \wt\Lb_{[\Psi_1,+\infty)}^{0,\mu}
\]
which associate to any point $(\psi,\Psi,s) \in \wt\Lb_{[\Psi_1,+\infty)}^{0,\mu}$ the point 
$\wt\cS_0^\pm(\psi,\Psi,s) \in \wt\Lb_{[\Psi_1,+\infty)}^{0,\mu}$ if there is an heteroclinic connection between 
% (as follows: we say $X^+=\wt\cS_0^\pm(X^-)$, if there exists a heteroclinic connection between 
these two points
% $X^\pm\in \wt\Lb_{[\Psi_1,+\infty)}^{0,\mu}$ 
through $\wt \Gamma_0^\pm$.
Moreover, \cite{DLS2} provides  formulas for these maps:
\[
\wt\cS_0^\pm(\psi,\Psi,s)=(\psi+\frac{\partial}{\partial \Psi} \mathcal{S}^\pm (\psi,\Psi,s), \Psi-\frac{\partial}{\partial \psi}\mathcal{S}^\pm (\psi,\Psi,s),s)
\]
where the functions  $\mathcal {S}^\pm$ are given, in first order, by $\mathcal{L}^\pm (\psi,\Psi,s)=L^0(\psi,\Psi,s,\sigma^*_\pm)$.
Next proposition in \cite{GMSS}, whose proof is straightforward using the computations in 
\cite{DKRS}, gives an asymptotic formula for the scattering maps of the RPC3BP:

\begin{prop}\cite{GMSS}
Let $\Psi_1>\Psi^*$, the scattering maps $\wt\cS_0^\pm: \T\times[\Psi_1,+\infty)\times\T\rightarrow\T\times[\Psi_1,+\infty)\times\T$ are of the form
\[
\wt\cS_0^\pm(\psi,\Psi,s)=(\psi+f^\pm(\Psi),\Psi,s)
\]
where
\be\label{scat-fun}
f^\pm(\Psi)=-\mu(1-\mu)\frac{3\pi}{2\Psi^4}+\cO(\Psi^{-8}).
\ee
\end{prop}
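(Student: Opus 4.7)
The plan is to combine the Poincar\'e--Melnikov formula $\mathcal{S}^\pm = \mathcal{L}^\pm + (\text{higher order})$ from \cite{DLS,DLS2}, which already produces the scattering map in the form $(\psi + \partial_\Psi\mathcal{S}^\pm, \Psi - \partial_\psi\mathcal{S}^\pm, s)$, with an explicit multipole expansion of the RPC3BP potential for an asteroid far from the primaries. To obtain the reduced form $\wt\cS_0^\pm(\psi,\Psi,s) = (\psi + f^\pm(\Psi), \Psi, s)$ it is enough to show that $\mathcal{L}^\pm$ depends only on $\Psi$. Two facts give this: first, by the rotational symmetry of the RPC3BP the potential $\wt V_A^{0,\mu}$ depends on $(\psi, s)$ only through $\psi - qs$, so after the change of variable $r = \sigma + t$ the Poincar\'e function $L^0$ depends on $(\psi_0, s_0, \sigma)$ only through $\beta := \psi_0 - qs_0 + \sigma$; second, the critical points $\sigma^*_\pm$ are chosen exactly so that $\beta \in \{0, \pi\}$, so $\mathcal{L}^\pm := L^0|_{\sigma = \sigma^*_\pm}$ becomes independent of $\psi_0, s_0$ altogether.

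For $\mathcal{L}^\pm(\Psi)$ itself I would use the Legendre-type expansion
\[
\frac{1-\mu}{|ze^{i\alpha}+(\mu,0)|} + \frac{\mu}{|ze^{i\alpha}+(\mu-1,0)|} = \frac{1}{z} + \frac{\mu(1-\mu)(3\cos^2\alpha - 1)}{2z^3} + \cO(z^{-5}),
\]
with $z = 2/x^2$ and $\alpha = \psi - qs$, in which the dipole $1/z^2$ vanishes because the Sun--Jupiter mass-weighted centre sits at the origin. Subtracting the Kepler term $x^2/2 = 1/z$ from $\wt V_A^{0,\mu}$ leaves $\tfrac{\mu(1-\mu)}{16}(3\cos^2(\psi-qs) - 1)\,x^6 + \cO(x^{10})$. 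Inserting the parametrization $r = (\Psi^3/2)(\tau+\tau^3/3)$, $x_h = 2/(\Psi\sqrt{1+\tau^2})$, $\alpha_h = 2\arctan\tau$ yields $x_h^6\, dr = 32\,\Psi^{-3}(1+\tau^2)^{-2}\,d\tau$. Writing $3\cos^2 y - 1 = \tfrac12 + \tfrac32\cos(2y)$ splits off the non-oscillatory piece, and its evaluation via $\int_{-\infty}^{+\infty}(1+\tau^2)^{-2}\,d\tau = \pi/2$ yields
\[
\mathcal{L}^\pm(\Psi) = \frac{\mu(1-\mu)\pi}{2\Psi^3} + (\text{oscillatory remainder}).
\]
Differentiating produces the stated leading coefficient $-3\pi\mu(1-\mu)/(2\Psi^4)$ of $f^\pm(\Psi)$.

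The main technical obstacle is controlling the oscillatory remainder and the higher multipoles uniformly as $\Psi \to \infty$. The oscillatory part carries a phase proportional to $\Psi^3(\tau+\tau^3/3)$, and since the integrand extends analytically to a strip with nearest singularities at $\tau = \pm i$, deforming the contour to $\operatorname{Im}\tau = \pm c$ produces a bound $\lesssim e^{-c'\Psi^3}$, comfortably inside the advertised $\cO(\Psi^{-8})$ remainder. The next multipole correction to $\wt V_A^{0,\mu}$ is $\cO(x^{10})$, contributing $\cO(\Psi^{-7})$ to $\mathcal{L}^\pm$ and hence $\cO(\Psi^{-8})$ to $f^\pm$, while the residual Melnikov corrections in $\mathcal{S}^\pm - \mathcal{L}^\pm$ are of the same or smaller order and are exactly the estimates carried out in \cite{DKRS}. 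Combining these gives the proposition.
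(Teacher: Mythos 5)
Your argument reproduces the route the paper takes (and outsources to \cite{GMSS,DKRS}): the Poincar\'e--Melnikov reduction of the scattering map, the observation that $L^0$ depends on $(\psi_0,s_0,\sigma)$ only through $\beta=\psi_0-qs_0+\sigma$ by rotational invariance, evaluation at the two critical phases $\beta\in\{0,\pi\}$, and the quadrupole expansion integrated along the parabolic homoclinic. The leading coefficient and the exponential smallness via contour shift to $\operatorname{Im}\tau=\pm c$ (analyticity up to $\tau=\pm i$, phase $\sim\Psi^3$) are correct.

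Two small points. First, the Legendre expansion you wrote is off by one order: the octupole does \emph{not} vanish for general $\mu$ --- the $\ell=3$ term is $-\mu(1-\mu)(2\mu-1)P_3(\cos\alpha)/z^4$, so the remainder after the quadrupole is $\cO(z^{-4})$, not $\cO(z^{-5})$. This does not hurt the conclusion because $P_3(\cos\alpha)$ is a combination of $\cos\alpha$ and $\cos 3\alpha$ with no constant term, hence has zero $\alpha$-average and, under the same contour-deformation argument, contributes only an exponentially small piece; the first algebraic correction beyond the quadrupole indeed comes from the $\ell=4$ (nonzero-mean) term at $\cO(z^{-5})=\cO(x^{10})$, giving $\cO(\Psi^{-7})$ in $\mathcal{L}^\pm$ and $\cO(\Psi^{-8})$ in $f^\pm$. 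Second, your reduction only establishes the form $(\psi+f^\pm(\Psi),\Psi,s)$ for the first-order generating function $\mathcal{L}^\pm$; the \emph{exact} form is cleanest to get from the conservation of the Jacobi constant (which equals $-\Psi$ on the cylinder of infinity) together with the symplecticity of the scattering map, which forces $\partial_\psi(\text{shift in }\psi)=0$. These are minor: your computation and conclusion are correct.
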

%This Proposition is directly cited from \cite{GMSS}, so we omit the proof here.\\

Next step is to study the RP4BP as a $\dt$-perturbation of the RPC3BP. To this send, in order to reduce the dimension  of the system we will work with  the $4$-dimensional stroboscopic Poincar\'e map. 
We choose a section $\Sigma=\{s=s_0\}$ and consider
\be\label{poin-3bp}
\cP_0:\Sigma\rightarrow\Sigma\quad\text{via }\ (x,\Xi,\psi,\Psi,s_0)\rightarrow 
\wt\phi_{T_{*}^+=2\pi q}^{0,\mu}(x,\Xi,\psi,\Psi, s_0)
%=\wt\phi_{2\pi q}^{0,\mu}(x,\Xi,\psi,\Psi, s_0).
\ee
%\textcolor{green}{
%I have added the last term, to stress that the period is $2\pi q$
%}
Then $\Lb_{\psi_0,\Psi_0}^{0,\mu}:=\wt\Lb_{\psi_0,\Psi_0}^{0,\mu}\bigcap\Sigma$ becomes a two parameter family of parabolic fixed points of $\cP_0$. 
Each fix point has  1-dimensional stable (resp. unstable) manifold
\[
W^\varsigma(\Lb_{\psi_0,\Psi_0}^{0,\mu}):=W^\varsigma(\wt\Lb_{\psi_0,\Psi_0}^{0,\mu})\cap\Sigma,\quad\varsigma=u,\ s.
\]
Analogously, 
$\Lb_{[\Psi_1,+\infty)}^{0,\mu}=\wt\Lb_{[\Psi_1,+\infty)}^{0,\mu}\cap\Sigma$ 
is the 2-dimensional normally parabolic invariant cylinder of infinity with 3-dimensional invariant stable (resp. unstable) manifolds
\[
W^\varsigma(\Lb_{[\Psi_1,+\infty)}^{0,\mu})=W^\varsigma(\wt\Lb_{[\Psi_1,+\infty)}^{0,\mu})\cap\Sigma,\quad\varsigma=u,\ s
\]
which intersect transversally along two 2-dimensional homoclinic channels
$\Gamma_0^\pm =\wt \Gamma_0^\pm \cap \Sigma$. 
The two scattering maps associated to these homoclinic channels are given by
\be\label{poin-scat-fun-3bp}
\cS_0^\pm(\psi,\Psi)=(\psi+f^\pm(\Psi),\Psi)
\ee where $f^\pm$ is the same function in (\ref{scat-fun}). 
Recall that $s$ is a free variable in the formula of $\wt\cS_0^\pm$, so the definition of $\cS_0^\pm$ is independent of the choice of the section $\Sigma$.\\

\subsection{Scattering map of the RP4BP}\label{s3s2}

In this section we study the  general RP4BP, that is,  system (\ref{ham-4bp})  $0<\mu\leq 1/2$ and  $0<\dt\le \dt_0$. 
As we established at the end of previous section, we will work with the  stroboscopic Poincar\'e map 
associated to $\Sigma=\{s=s_0\}$, i.e.
\be\label{poin-4bp}
\cP :\Sigma\rightarrow\Sigma \quad\text{via } \   (x,\Xi,\psi,\Psi,s_0)\rightarrow \wt\phi_{T_{\dt}}^{\dt,\mu} (x,\Xi,\psi,\Psi, s_0).
\ee
and our goal is to apply perturbative arguments of $\cP$ respect to $\cP_0$ in (\ref{poin-3bp}) 
to establish the transversal intersection between the stable and unstable manifolds of the ``parabolic infinity'' $\Lb_{[\Psi_1,+\infty)}^{0,\mu}$. In fact, for our purposes, it is enough to consider a compact part of it. This will make the arguments simpler. 
Precisely, let $\Psi^2_0>\Psi^1_0>\Psi_1\ge \Psi^*$ be fixed.
%, we still consider the Poincar\'e map 
Then,  formally 
$\cP=\cP_0+\dt\cP_1+\cO(\dt^2)$ in the restricted compact region $\{\Psi\in[\Psi_0^1,\Psi_0^2]\}$. 
So for the corresponding $\Lb_{[\Psi_0^1,\Psi_0^2]}^{\dt,\mu}=\Lb_{[\Psi_1,+\infty)}^{\dt,\mu}\bigcap \{\Psi\in[\Psi_0^1,\Psi_0^2]\}$,  
the stable and unstable manifolds of $\Lb_{[\Psi_0^1,\Psi_0^2]}^{\dt,\mu}$ intersect transversally for 
$\dt=\dt (\Psi_0^1,\Psi_0^2)>0$ small enough. 
This implies that there are two global homoclinic channels $\Gamma^\pm_\dt$ diffeomorphic and 
$\cO(\dt)-$close to $\Gamma^\pm_0$.\\

%However, we can choose $0<\dt\ll O(\exp(-\eps_0^{-3}))$ arbitrarily small to apply the perturbative arguments to the stroboscopic Poincar\'e map $\cP_0$ in (\ref{poin-3bp}). Precisely, let $\Psi_2>\Psi_1>\Psi^*$ be fixed, we still consider the Poincar\'e map associated to $\Sigma=\{s=s_0\}$, i.e.
%\be\label{poin-4bp}
%\cP:\Sigma\rightarrow\Sigma \quad\text{via }(x,\Xi,\psi,\Psi,s_0)\rightarrow \wt\phi_{T_{\dt,\mu}}^\dt(x,\Xi,\psi,\Psi, s_0).
%\ee
%Then formally $\cP_\dt=\cP_0+\dt\cP_1+O(\dt^2)$ in the restricted compact region. So for the corresponding $\Lb_{[\Psi_1,\Psi_2]}^{\dt,\mu}=\Lb_{[\Psi_1,+\infty)}^{\dt,\mu}\bigcap \{\Psi\in[\Psi_1,\Psi_2]\}$,  the stable and unstable manifolds of $\Lb_{[\Psi_1,\Psi_2]}^{\dt,\mu}$ intersect transversally for $\dt>0$ small enough. That implies that there are two global homoclinic channels $\cC^\pm_\dt$ diffeomorphic and $\dt$ close to $\cC^\pm_0$. 
These two channels define two scattering maps
\[
\cS^\pm:\Lb_{[\Psi_0^1,\Psi_0^2]}^{\dt,\mu}\rightarrow \Lb_{[\Psi^*,+\infty)}^{\dt,\mu}, 
\]
depending regularly on $\dt$, i.e.
\be\label{poin-scat-fun-4bp}
\cS^\pm=\cS_0^\pm+\dt\cS_1^\pm+\cO(\dt^2)
\ee
where $\cS_0^\pm$ are the scattering maps of the RPC3BP given by (\ref{poin-scat-fun-3bp}). As is shown in Proposition 4 of \cite{DKRS}, the maps $\cS^\pm$ are area preserving maps on the cylinder $\Lb_{[\Psi_0^1,\Psi_0^2]}^{\dt,\mu}$.\\

Our goal is now to obtain an infinite sequence of fixed points $p_i=\Lambda_{\psi_i,\Psi_i} \in \Lb_{[\Psi_0^1,\Psi_0^2]}^{\dt,\mu}$   through the Poincar\'{e} map
connected  through heteroclinic orbits. 
Of course the sequence can be constant, and in this case we would have a fixed point $p$ with  an homoclinic connection, or finite, and this would give us a set of fixed points $p_1,\dots p_k$ connected through heteroclinic connections between them. 
The main observation here, as was established in \cite{GMSS} is that a  point $p$  with  an homoclinic orbit would correspond to a fix point $p$ of one Scattering map $\cS^\pm (p)=p$, a finite  heteroclinic chain of points  $p_1\dots p_k$  would correspond to a periodic orbit of the scattering map:
$(\cS^{\pm})^k (p_i)=p_{i}$, and an infinite sequence $\{p_i\}$ can be obtained if we find invariant curves of the scattering map. 
So, the dynamical study of this map will give the needed transition chain.
\\

%To construct oscillatory orbits of the elliptic problem, we need an infinite transition chain: a sequence %of fixed points belonging to $[\Psi_1,\Psi_2]$ with transversal heteroclinic connections between %consecutive points of the chain. By the definition of the scattering map, any bounded (forward
%or backward) orbit of one of the scattering maps provides such a chain.\\
Notice that $\cS^\pm$ are twist maps for $\dt$ sufficiently small.
In fact, formulas  \eqref{poin-scat-fun-3bp} show that, for $\Psi  \in [\Psi_0^1,\Psi_0^2]$, they satisfy a  twist condition if $0\le \dt$ small enough:
\[
\frac{\partial \cS^\pm}{\partial \Psi} \ge \frac{C}{(\Psi_0^2)^5}
\]
Therefore, one can apply the classical Twist Theorem of Herman \cite{He}, to obtain that there exist %such  bounded orbits of the scattering maps we can resort to certain 
%invariant curves of $\cS^\pm$.
%\begin{thm}[Herman, \cite{H}]
%For an exact symplectic $C^l$ map $f:[0,1]\times\T\rightarrow[0,1]\times\T$ with $l>4$, if formally %$f=f_0+\eps f_1$ with $f_0(I,\psi)=(I,\psi+A(I))$, $A(I)$ being $C^l$, $|\partial_IA|\geq M>0$ %and $\|f_1\|_{C^l}\leq 1$. Then for $\rho:=\sqrt\dt M^{-1}$ sufficiently small, there exists a set of %Diophantine numbers $\om$ of a Diophantine index $5/4$, of which we can find invariant curves $
%\cT_\om^\eps$ which are the graph of $C^{l-3}$ functions $u_\om$, such that the motion on each %$\cT_\om^\eps$ is $C^{l-3}-$conjugate to the rotation of a frequency $\om$. Moreover, $\|%u_\om\|_{C^{l-3}}\lesssim \sqrt\dt$.
%\end{thm}
%
%In our setting the twist constant satisfies $|\partial_IA|\gtrsim \Psi^{-5}$, as long as $\dt\ll1$; Therefore, for any fixed $\Psi_2>\Psi_1>\Psi^*$, we can always find 
KAM curves of $\cS^\pm$ inside $\Lb_{[\Psi_1,\Psi_2]}^{\dt,\mu}$ associated to some diophantine numbers. 
Clearly, any orbit of $\cS^\pm$ on the KAM curve would be bounded and gives us a infinite sequence 
%, so is the orbit lying in the cell formed by any couple of adjacent KAM curves.
of points in $\Lb_{[\Psi_1,\Psi_2]}^{\dt,\mu}$ with heteroclinic orbits between them as wanted.
%\begin{cor}\label{kam-scat-cor}
%Consider any interval $\Psi_2>\Psi_1>\Psi^*$, then for $\dt$ small enough, the scattering maps $
%\cS^\pm$ given by (\ref{poin-scat-fun-4bp}) have orbits remaining in $\Lb_{[\Psi_1,\Psi_2]}^{\dt,%\mu}$ for all time.
%\end{cor}
In terms of  the Poincar\'e map $\cP$ in (\ref{poin-4bp}), we have obtained a sequence of fixed points $\{\Lb_{\psi_k,\Psi_k}^{\dt,\mu}\}_{k\in\mathbb N}\subset\Lb_{[\Psi_1,\Psi_2]}^{\dt,\mu}$ 
such that 
$W^u(\Lb_{\psi_k,\Psi_k}^{\dt,\mu})$ intersects $W^s(\Lb_{[\Psi_1,\Psi_2]}^{\dt,\mu})$ transversally at a point belonging to $W^s(\Lb_{\psi_{k+1},\Psi_{k+1}}^{\dt,\mu})$. \\

\begin{rmk}\label{rmk-exp-small}
The relative position of the S-J-P-A can be described by the Figure \ref{fig-4bp}. 
From this Figure we can get an underlying restriction $0<\dt\lesssim \cO(\exp(-\eps_{\dt}^{-3}))$. This is because the distance between the Asteroid and the origin has to be greater than $\cO(1/\eps_{\dt}^2)$, to avoid the collision between the Asteroid and the Planet happening. 
That implies the angular momentum $\Psi$ of the Asteroid should be greater than $\cO(1/\eps_{\dt})$. 
However, in \cite{GMS} it is proved that for the RPC3BP, the splitting between the manifolds of the infinity $\Lb_{[\Psi_1,\Psi_2]}^{\dt,\mu}$  won't exceed $\cO(\exp(-\Psi_{\max}^3))$, where $\Psi_{\max}=\Psi _2$ is the maximum value of the angular momentum. 
If we want system (\ref{ham-4bp}) to be an effective perturbation of the RPC3BP, $\dt$ has to be imposed an upper restriction $0<\dt\le \cO(\exp(-1/\eps_{\dt}^{3}))$.\\
\end{rmk}

\subsection{Shadowing orbits in the PR4BP}\label{s4}
\vspace{20pt}

%\textr{
Based on the transversality of $W^u(\Lb_{[\Psi_1,\Psi_2]}^{\dt,\mu})$ and $W^s(\Lb_{[\Psi_1,\Psi_2]}^{\dt,\mu})$ proved in Sec. \ref{s3}, we want to obtain the existence of shadowing orbits along the obtained infinite transition chain of the scattering map through a suitable $\lambda-$Lemma. 
As the manifold $\Lb_{[\Psi_1,\Psi_2]}^{\dt,\mu}$ if parabolic, we will apply  the $\lambda-$Lemma in \cite{GMSS} which can be easily adapted to our system.
%, which analyzes how orbits close to the stable manifold of one of the fixed point of $\Lb_{[\Psi_1,\Psi_2]}^{\dt,\mu}$ evolve and stretch along the unstable invariant manifold of the fixed point. Usually in the literature of Arnold diffusion, the $\lb-$Lemma is associated with deal with
%invariant manifolds of objects which are (normally or partially) hyperbolic, not parabolic. 
In fact,  as we can see from the proof of Theorem \ref{stable-mani-thm}
%, our  \eqref{eq-infinity} after the change \eqref{eq:changepq} becomes system \eqref{eq-systempq}system as in \cite{GMSS}, so technically the approach to get shadowing orbits in \cite{GMSS} is available to our case.
%}\medskip In fact, 
in Sec. \ref{s3}, we have showed that near infinity system \eqref{eq-infity}  in the coordinates $(q,p,\alpha,\Psi,s)$ given by \eqref{eq:changepq},  becomes system \eqref{eq:systempq}, which is analogous to system (14) in \cite{GMSS}. 
Therefore, the Lambda lemma for this system established in  
that paper immediately gives the %(see  Proposition \ref{lam-prop} instantly indicates the 
following $\lb-$Lemma:

\begin{lem}[$\lb-$Lemma]\label{lam-lem}
Let $\gamma$ be a curve which transversally intersects 
$W^s(\Lb_{[\Psi_1,\Psi_2]}^{\dt,\mu})$ at a point $P\in W^s(\Lb_{\psi_0,\Psi_0}^{\dt,\mu})$ for some fixed point 
$\Lb_{\psi_0,\Psi_0}^{\dt,\mu}\in \Lb_{[\Psi_1,\Psi_2]}^{\dt,\mu}$. Let $Z\in W^u(\Lb_{\psi_0,\Psi_0}^{\dt,\mu})$ be another point. 
For any neighborhood $\cU$ of $Z$ in $\R^4$ and any $\epsilon>0$, there exists a point $a\in B_\epsilon(P)\cap\gamma$ and a positive integer $n$ depending on $Z,\epsilon, \cU$ such that $\cP^n(a)\in\cU$. 
As a consequence $W^u(\Lb_{\psi_0,\Psi_0}^{\dt,\mu})\subset\overline{\cup_{j\geq0}\cP^j(\Gamma)}$.
\end{lem}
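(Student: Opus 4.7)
The plan is to reduce the statement to the parabolic $\lb$-Lemma already established in \cite{GMSS} and transport the conclusion back via the coordinate changes used in the proof of Theorem \ref{stable-mani-thm}. Since $\cP$ is the time-$T_\dt$ stroboscopic map of the flow of \eqref{ham-4bp-polar} restricted to the section $\Sigma=\{s=s_0\}$, it suffices to work at the level of the flow $\wt\phi_t^{\dt,\mu}$ and then evaluate at integer multiples of $T_\dt$.

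First I would rewrite the flow in the coordinates $(q,p,\alpha,\Psi,s)$ given by \eqref{eq:changepq}. The resulting equations \eqref{eq:systempq} have exactly the form of system $(14)$ in \cite{GMSS}: $q$ and $p$ play the role of stable and unstable directions that decay/grow only polynomially (the leading rate is encoded in the common factor $(q+p)^3$), while $(\alpha,\Psi)$ are slow directions that evolve at order $(q+p)^6$ and parametrize the cylinder of fixed points. The fixed point $\Lb_{\psi_0,\Psi_0}^{\dt,\mu}$ corresponds to $\{q=p=0,\ \alpha=\alpha_0,\ \Psi=\Psi_0\}$, and its one-dimensional local stable (resp.\ unstable) manifold inside $\Sigma$ is, to leading order, the $q$-axis (resp.\ the $p$-axis).

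Next I would translate the hypotheses. The transversality of $\gamma$ with $W^s(\Lb_{[\Psi_1,\Psi_2]}^{\dt,\mu})$ at $P$ translates, in the new coordinates, to the statement that the tangent vector to the image of $\gamma$ at the image of $P$ has a nonzero component in the unstable direction $p$; this is precisely the transversality hypothesis required by the $\lb$-Lemma of \cite{GMSS}. Invoking that lemma verbatim then provides, for each $\epsilon>0$ and each open neighborhood $\cU$ of $Z\in W^u(\Lb_{\psi_0,\Psi_0}^{\dt,\mu})$, a point $a\in B_\epsilon(P)\cap\gamma$ and an iterate $n$ with $\cP^n(a)\in\cU$. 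The inclusion $W^u(\Lb_{\psi_0,\Psi_0}^{\dt,\mu})\subset \overline{\bigcup_{j\ge 0}\cP^j(\gamma)}$ follows by letting $Z$ and $\cU$ vary.

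The only thing to be verified carefully is that the reduction to \eqref{eq:systempq} produces a system structurally identical to the one in \cite{GMSS}, in particular the precise form of the higher-order remainders $\cO_0$ in $q$ and $p$. This matching was effectively carried out when proving Theorem \ref{stable-mani-thm}, so no new estimates are needed here. The genuine analytical difficulty, namely replacing the exponential contraction/expansion of the standard hyperbolic $\lb$-Lemma by sharp polynomial estimates and suitable cone conditions adapted to the parabolic setting, is the main obstacle in principle, but it has already been resolved in \cite{GMSS}; the present lemma is therefore a direct corollary after the coordinate change.
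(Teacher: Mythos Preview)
Your proposal is correct and follows essentially the same approach as the paper: reduce to the parabolic $\lambda$-Lemma of \cite{GMSS} via the coordinate change \eqref{eq:changepq}, observe that the resulting system \eqref{eq:systempq} has the form of system~(14) in \cite{GMSS}, and then invoke their result directly. The paper is in fact even terser---it simply notes that the structural identification was already made in the proof of Theorem~\ref{stable-mani-thm} and states that the $\lambda$-Lemma of \cite{GMSS} ``immediately gives'' the present lemma.
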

%\textr{
%\begin{proof}
%For readability we postpone the proof to the Appendix \ref{B}, which instantly follows from Proposition \ref{lam-prop}.
%\end{proof}
%}
\begin{rmk}
Since system (\ref{ham-4bp}) is reversible of time $t$, we can get a similar conclusion by reversing the time. 
%Here is that: 
%Let $\Gamma$ be a curve which transversally intersects $W^u(\Lb_{[\Psi_1,\Psi_2]}^{\dt,\mu})$ at a point $P\in W^u(\Lb_{\psi_0,\Psi_0}^{\dt,\mu})$ for some fixed point $\Lb_{\psi_0,\Psi_0}^{\dt,\mu}\in \Lb_{[\Psi_1,\Psi_2]}^{\dt,\mu}$. Let $Z\in W^s(\Lb_{\psi_0,\Psi_0}^{\dt,\mu})$ be another point. For any neighborhood $\cU$ of $Z$ in $\R^4$ and any $\epsilon>0$, there exists a point $a\in B_\epsilon(P)\cap\Gamma$ and a positive integer $n$ depending on $Z,\epsilon, \cU$ such that $\cP^{-n}(a)\in\cU$. As a consequence $W^s(\Lb_{\psi_0,\Psi_0}^{\dt,\mu})\subset\overline{\cup_{j\geq0}\cP^{-j}(\Gamma)}$.
\end{rmk}
%\textr{
Benefit from Lemma \ref{lam-lem}, now we give the shadowing result which gives the 
existence of shadowing orbits, by a standard argument proved in  \cite{DLS}. We omit the proof here, because is done in \cite{DLS} in the hyperbolic case and adapted in \cite{GMSS} for the parabolic one:

%\textcolor{green}{
%Can you change  the notation in this proposition? instead of talking of numbers $\{\iota_k\}%_{k\in\mathbb N}$ and $\{\wt\iota_k\}_{k\in\mathbb N}$ talk about theirghborhoods, in this way is %less technic and less ``copy and paste''!!!\\
%Also adapt the proof of theoremm 1.1, because I erased the corollary}

\begin{prop}[Shadowing orbits]\label{shad-prop}
Let $\{\Lb_{\psi_k,\Psi_k}^{\dt,\mu}\}_{k\in\mathbb N}$ be a family of parabolic fixed points in $\Lb^{[\Psi_1,\Psi_2]}_{\dt,\mu}$ of the Poincar\'e map $\cP$ in (\ref{poin-4bp}), such that for all $k\in\mathbb N$, $W^u(\Lb_{\psi_k,\Psi_k}^{\dt,\mu})$ intersects $W^s(\Lb_{[\Psi_1,\Psi_2]}^{\dt,\mu})$ transversally at $P_k\in W^s(\Lb_{\psi_{k+1},\Psi_{k+1}}^{\dt,\mu})$. 
%\textr{
Accordingly, for any two sequences of real numbers $\{\iota_k\}_{k\in\mathbb N}$ and $\{\wt\iota_k\}_{k\in\mathbb N}$ with $0<\iota_k,\wt\iota_k\ll 1$ sufficiently small, there exist $a\in B_{\iota_0}(\Lb_{\psi_0,\Psi_0}^{\dt,\mu})$ and two sequences of positive integers $\{N_k\}_{k\in\mathbb N}$, $\{\wt N_k\}_{k\in\mathbb N}$ satisfying $N_k<\wt N_k<N_{k+1}<\wt N_{k+1}$ for all $k$, such that 
\[
dist(\cP^{N_k}(a),\Lb_{\psi_k,\Psi_k}^{\dt,\mu})\leq \iota_k,\quad dist(\cP^{\wt N_k}(a), P_k)\leq \wt\iota_k
\]
 for all $k\in\mathbb N$, see Fig. \ref{} for a concrete impression.
 %}
\end{prop}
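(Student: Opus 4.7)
The plan is to construct the shadowing orbit by an inductive Cantor-type argument, using the parabolic $\lambda$-Lemma (Lemma \ref{lam-lem}) repeatedly as the engine to produce the required transversal returns. The key idea, standard in this context since \cite{DLS}, is that each transverse heteroclinic point $P_k \in W^u(\Lb_{\psi_k,\Psi_k}^{\dt,\mu}) \cap W^s(\Lb_{\psi_{k+1},\Psi_{k+1}}^{\dt,\mu})$ acts as a ``switching station'' which allows an orbit passing near $\Lb_{\psi_k,\Psi_k}^{\dt,\mu}$ to be redirected to an orbit passing near $\Lb_{\psi_{k+1},\Psi_{k+1}}^{\dt,\mu}$; the $\lambda$-Lemma converts the existence of such abstract switches into the existence of genuine iterates of $\cP$.

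The inductive construction proceeds as follows. Start with a small $C^1$ curve $\gamma_0$ through a point of $B_{\iota_0}(\Lb_{\psi_0,\Psi_0}^{\dt,\mu})$ transverse to $W^s(\Lb_{\psi_0,\Psi_0}^{\dt,\mu})$. By Lemma \ref{lam-lem}, the forward iterates of $\gamma_0$ $C^1$-accumulate on $W^u(\Lb_{\psi_0,\Psi_0}^{\dt,\mu})$; since $P_0$ lies on this unstable manifold, there exist integers $N_0 < \wt N_0$ and a subarc $\gamma_0^{\prime} \subset \gamma_0$ so that $\cP^{N_0}(\gamma_0^{\prime})$ meets $B_{\iota_0}(\Lb_{\psi_0,\Psi_0}^{\dt,\mu})$ while $\cP^{\wt N_0}(\gamma_0^{\prime})$ lies within $\wt\iota_0$ of $P_0$, still transverse to $W^s(\Lb_{\psi_1,\Psi_1}^{\dt,\mu})$ by openness of transversality. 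Applying Lemma \ref{lam-lem} now to the curve $\cP^{\wt N_0}(\gamma_0^{\prime})$ with respect to $W^s(\Lb_{\psi_1,\Psi_1}^{\dt,\mu})$ and $W^u(\Lb_{\psi_1,\Psi_1}^{\dt,\mu})$, we extract integers $N_1 < \wt N_1$ with $\wt N_0 < N_1$ and a further subarc $\gamma_1 \subset \gamma_0^{\prime}$ realizing the required closeness at step $k=1$. Iterating yields a nested family of non-empty compact subarcs $\gamma_0 \supset \gamma_1 \supset \gamma_2 \supset \cdots$ and a strictly increasing sequence $N_0 < \wt N_0 < N_1 < \wt N_1 < \cdots$. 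Any point $a \in \bigcap_{k\ge 0} \gamma_k$, which is non-empty by compactness, satisfies simultaneously all the required conditions $\mathrm{dist}(\cP^{N_k}(a),\Lb_{\psi_k,\Psi_k}^{\dt,\mu}) \le \iota_k$ and $\mathrm{dist}(\cP^{\wt N_k}(a),P_k) \le \wt\iota_k$.

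The main obstacle is not the combinatorial bookkeeping but the fact that $\Lb_{[\Psi_1,\Psi_2]}^{\dt,\mu}$ is only normally parabolic rather than hyperbolic, so the classical Palis $\lambda$-Lemma does not apply. This is precisely the difficulty already overcome in \cite{GMSS}, whose parabolic $\lambda$-Lemma we invoke as Lemma \ref{lam-lem}. A secondary technical point is that at each induction step one must verify that shrinking $\gamma_k$ to the subarc $\gamma_{k+1}$ does not destroy transversality with $W^s(\Lb_{\psi_{k+1},\Psi_{k+1}}^{\dt,\mu})$; this follows from the $C^1$-accumulation statement in Lemma \ref{lam-lem}, which guarantees that the iterates of a transverse curve can be made $C^1$-close to $W^u$, hence transverse to any submanifold which is itself transverse to $W^u$ at the target point. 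With these ingredients in place the argument is identical to the one carried out in Section 6 of \cite{DLS} in the hyperbolic setting and adapted in \cite{GMSS} to the parabolic one, so we omit the routine details.
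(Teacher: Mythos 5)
Your proposal is correct and is essentially the argument the paper itself relies on: the paper omits the proof, citing the hyperbolic shadowing construction of \cite{DLS} adapted to the parabolic setting via the $\lambda$-Lemma of \cite{GMSS} (Lemma \ref{lam-lem}), and your nested-subarc induction with strictly increasing times $N_0<\wt N_0<N_1<\wt N_1<\cdots$ and a point $a\in\bigcap_k\gamma_k$ is exactly that standard construction. One caution: Lemma \ref{lam-lem} as stated is a $C^0$ statement (existence of points $a\in B_\epsilon(P)\cap\gamma$ with $\cP^n(a)\in\cU$, and $W^u(\Lb_{\psi_0,\Psi_0}^{\dt,\mu})\subset\overline{\cup_{j\geq0}\cP^j(\gamma)}$), so the ``$C^1$-accumulation statement in Lemma \ref{lam-lem}'' you invoke to propagate transversality of the iterated arc with $W^s(\Lb_{\psi_{k+1},\Psi_{k+1}}^{\dt,\mu})$ is not literally contained in the lemma as quoted; it is the stronger curve version proved in \cite{GMSS}, and that is the form one must cite at that step.
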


\begin{figure}
\begin{center}
\includegraphics[width=14cm]{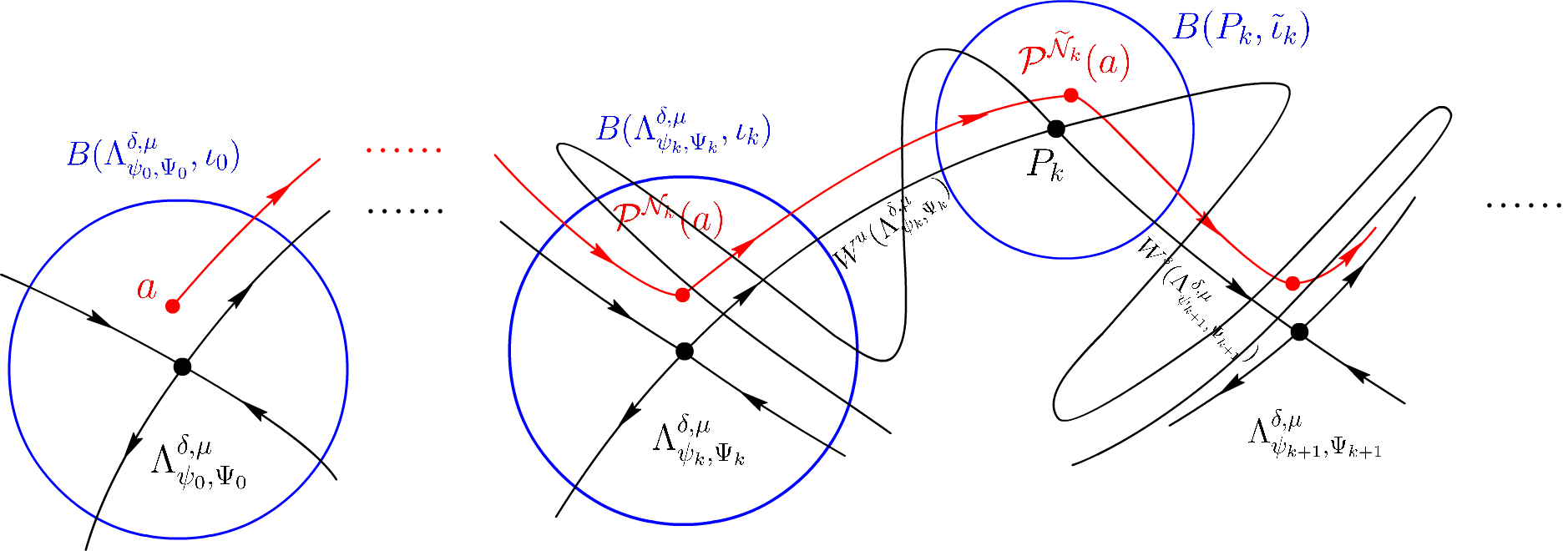}
\caption{shadowing orbits and the $\lb-$Lemma }
\label{}
\end{center}
\end{figure}

Now we can derive the second item of  Theorem \ref{main-thm} directly from this Proposition:

\begin{proof}{\it of Theorem \ref{main-thm}}. 
Let $\{\Lb_{\psi_k,\Psi_k}^{\dt,\mu}\}_{k\in\N}$ be one of the bounded orbits given in subsection \ref{s3s2} for the scattering map $\cS^+$: 
$\Lb_{\psi_k,\Psi_k}^{\dt,\mu}=\cS^+(\Lb_{\psi_{k-1},\Psi_{k-1}}^{\dt,\mu})$ for all $k\geq 1$.
%, $W^u(\Lb_{\psi_{k-1},\Psi_{k-1}}^{\dt,\mu})$ intersects $W^s(\Lb_{\psi_k,\Psi_k}^{\dt,\mu})$ transversally of a point $P_k$. 
%\textr{
Applying Proposition \ref{shad-prop}, we take $\iota_k=1/k$ and $\wt \iota_k=\wt\iota$ uniformly small such that $B_{\wt\iota_k}(P_k)$ don't intersect $\Lb^{[\Psi_1,\Psi_2]}_{\dt,\mu}$. There exists integers $N_k$ and $\wt N_k$ due to Prop \ref{shad-prop}, such that for some $a\in B_{\iota_0}(\Lb_{\psi_0,\Psi_0}^{\dt,\mu})$, dist$(\cP^{N_k}(a),\Lb_{\psi_k,\Psi_k}^{\dt,\mu})\leq 1/k$ and dist$(\cP^{\wt N_k}(a),\Lb_{\psi_k,\Psi_k}^{\dt,\mu})\leq \wt\iota$ for all $k\in\Z_+$.  That implies  the orbit starting from $a$ is oscillatory, since
%namely $\liminf_{k\rightarrow+\infty}dist(\cP^{N_k}(a), \Lb_{\psi_k,\Psi_k}^{\dt,\mu})=0$ and $\limsup_{k\rightarrow+\infty}dist(\cP^{\wt N_k}(a), \Lb_{\psi_k,\Psi_k}^{\dt,\mu})>0$. This is because 
$\iota_k\rightarrow 0$ as $k\rightarrow+\infty$ and $B_{\wt\iota}(P_k)$ doesn't intersect $\{x=\Xi=0\}$. 
%}
\end{proof}
\vspace{20pt}

\appendix

\section{Rescaling transformations for the 3BP}\label{A}
\vspace{20pt}
In this appendix we give some more details about the transformations done to system 
% So far we haven't seen any connection between system 
$H_{rot}$ in (\ref{polar sys-4bp}) and the RPC3BP. 
  Nonetheless, we fix $\Omega=\alpha$, take  $\dt\ll\mu$ and make the following rescaling 
 %  Let $\dt\rightarrow0$, $H_{rot}$ converges to $K_{rot}$, which is totally integrable. Apparently there exists a circle of fixed points for $K_{rot}$, i.e. $\{r=1\}\times\{\theta\in\T\}\times\{R=0\}\times\{\Theta=\alpha\}$. \\
       \be\label{trans-res}
 \Phi_{res}^1:  \left\{
   \begin{aligned}
   p_2&=\dt v_2,&\\
   r&=1+\sqrt\dt\wt r,& \\
   R&=\sqrt\dt\wt R,& 
   %\theta&=t+\sqrt\dt\cdot\wt\theta,& \\
  % \Om&=\alpha,&
   \end{aligned}
   \right.
   \ee
   then 
   \[
   dr\wedge dR+d\theta\wedge d\Om+dq_2\wedge dp_2=\dt (d\wt r\wedge d\wt R+dq_2\wedge dv_2).
   \]
Therefore, we obtain a Hamiltonian system of Hamiltonian:
  %, if we rescale the Hamiltonian by  
  \be\label{rescal-sys-4bp}
  \wt H_{res}^\dt(\wt r,\wt R,q_2,v_2):=\frac{H_{rot}+\alpha/2}{\dt},
  \ee
   that can be expressed by
   \be
    \wt H_{res}^\dt(\wt r,\wt R, q_2,v_2)&=&\underbrace{\Big[\frac12|v_2|^2-\frac{1-\mu}{|q_2+ (\mu,0)|}-\frac{\mu}{|q_2-(1-\mu,0)|}-q_2\times v_2\Big]}_{RPC3BP}\nonumber\\
  & &+\underbrace{\frac12\Big[\frac{\wt R^2}{\alpha}+\alpha\wt r^2\Big]}_{rotator}+\Delta \wt H_{res}^\dt(\wt r,q_2, v_2) \label{eq:Hres}
  \ee
  where
  \begin{equation}\label{eq:Hresdelta}
\Delta \wt H_{res}^\dt(\wt r,q_2, v_2)=\frac{\dt}{2}|v_2|^2+ \wt f^\dt(\wt r, q_2, v_2)+ \wt g^\dt(\wt r,q_2)
      \end{equation}
  with: 
  \be\label{f-fun}
%  \left\{
%  \begin{aligned}
\wt f^\dt(\wt r, q_2, v_2)&=& \frac{\alpha^2\wt r^2-2\alpha q_2\times v_2+\dt(q_2\times v_2)^2}{2\alpha(1+\sqrt\dt\wt r)^2}+q_2\times v_2-\frac{\alpha\wt r^2}{2}\nonumber\\
 &=&\frac{\dt(q_2\times v_2)^2+(2\sqrt\dt\wt r+\dt\wt r^2)\cdot(2\alpha q_2\times v_2-\alpha^2\wt r^2)}{2\alpha(1+\sqrt\dt\wt r)^2}
% &=& \textr{\sqrt\dt \underbrace{(2\wt r\cdot q_2\times v_2-\alpha\wt r^3)}_{f_1(\wt r,q_2\times v_2)}+\dt\underbrace{(\frac{1}{2\alpha}(q_2\times v_2)^2-\frac{7\alpha}{2}\wt r^4-3\wt r^2 q_2\times v_2)}_{f_2(\wt r,q_2\times v_2)}}+\cO(\dt^{3/2}),
%  \wt\Theta&=(\alpha+\mu)\wt r+(\mu-1)q_2\times v_2 &\\
%  \dot{\wt\theta}&=-2\wt r+O(\sqrt\dt) &
%  \end{aligned}
%  \right.
  \ee
  and
  \be\label{g-fun}
   \wt g^\dt(\wt r,q_2)&=&\Big[\frac{1-\mu}{|q_2+ (\mu,0)|}-\frac{1-\mu}{|q_2+ \mu(1+\sqrt\dt\wt r) (1,0)|}\Big]\nonumber\\
    & &+\Big[\frac{\mu}{|q_2-(1-\mu,0)|}-\frac{\mu}{|q_2-(1-\mu)(1+\sqrt\dt\wt r)(1,0)|}\Big].
%    &=&\mu(1-\mu)\sqrt\dt\wt r\eps^4\Big\{\frac{\rho\cos\phi+\mu\eps^2}{\big[\rho^2+2\eps^2\mu\rho\cos\phi+\eps^4\mu^2\big]^{3/2}}\nonumber\\
%    & &+\frac{\rho\cos\phi+(\mu-1)\eps^2}{\big[\rho^2-2\eps^2(1-\mu)\rho\cos\phi+\eps^4(1-\mu)^2\big]^{3/2}}\Big\}+O(\dt\eps^4)
  \ee
%  This is still a Hamiltonian function of time $t$ and
%  and
%  \[
%  f_3(\wt r, q_2,v_2)=\frac{1}{2\alpha}(q_2\times v_2)^2-\frac{7\alpha}{2}\wt r^4-3\wt r^2 q_2\times v_2,
%  \]
 %its flow preserves the exact symplectic form $d\wt r\wedge d\wt R+dq_2\wedge dv_2$.
\begin{rmk}
Taking $\dt\in[0,\mu)$ as a parameter, then for $\dt=0$ the system (\ref{rescal-sys-4bp}) becomes a direct sum of a RPC3BP 
system and a rotator; 
Moreover, as $\dt\rightarrow 0$, $\Dt \wt H_{res}^\dt\rightarrow 0$,  
in fact $\Dt \wt H_{res}^\dt=\mathcal{O}(\sqrt{\dt})$ when the variables are bounded.
Therefore, for sufficiently small $\dt$, we can apply the perturbative theory to (\ref{rescal-sys-4bp}) and show the persistence of certain periodic orbits for the RPC3BP.\\
\end{rmk}

As is said in Section \ref{s2}, we try to seek the comet-type periodic orbits for $H_{rot}^\dt$ in (\ref{polar sys-4bp}). 
Aiming this, we need transfer $\wt H^\dt_{res}$ further, until we get the desired system. 
Precisely,  for $|q_2|\gg1$, we have the estimate
\be
\wt H_{res}^\dt(\wt r, \wt R,q_2,v_2)&=&\Big[\frac12|v_2|^2-q_2\times v_2-\frac{1}{|q_2|}+\cO(\frac{\mu}{|q_2|^3})\Big]+
\frac12\Big[\frac{\wt R^2}{\alpha}+\alpha\wt r^2\Big]\nonumber\\
& &+\Dt \wt H_{res}^\dt(\wt r, q_2,v_2);\nonumber
\ee
If we apply a further step rescaling, i.e., we take a number $0<\eps \ll 1$ and we define:
\be\label{trans-res-2}
 \Phi_{res}^2:\ q_2=\frac{\wh q_2}{\eps^2},\quad  v_2=\eps \wh v_2,\quad \wt r=\frac{\wh r}{\sqrt\eps},\quad\wt R=\frac{\wh R}{\sqrt\eps},\quad
 0<\eps\ll1,
\ee
then 
\[
d\wt r\wedge d\wt R+dq_2\wedge dv_2=\frac1\eps(d\wh r\wedge d\wh R+d\wh q_2\wedge d\wh v_2).
\]
Consequently the new system is Hamiltonian with  Hamiltonian
\[
\wh H_{res}^{\dt,\eps} (\wh r, \wh R,\wh q_2,\wh v_2):=
\eps \cdot \wt H_{res}^\dt(\frac{\wh r}{\sqrt\eps},\frac{\wh R}{\sqrt\eps},\frac{\wh q_2}{\eps^2},\eps \wh v_2),
\]
its flow preserves the symplectic form $d\wh r\wedge d\wh R+d\wh q_2\wedge d\wh v_2$. 
%\textr{The associated ODE can be expressed as
%\be\label{}
%\left\{
%   \begin{aligned}
%   \dot{\wh q}_2&:=\eps^2\dot q_2=\eps^2\frac{\partial H_{res}^\dt}{\partial v_2}=\eps^2\frac{\partial H_{res}^\dt}{\partial \wh v_2}\cdot\frac{\partial\wh v_2}{\partial v_2}=\eps\frac{\partial H_{res}^\dt}{\partial \wh v_2}= \frac{\partial \wh H_{res}^\dt}{\partial \wh v_2},& \\
%   \dot{\wh v}_2&:=\frac{\dot v_2}{\eps}=-\frac1\eps\cdot\frac{\partial H_{res}^\dt}{\partial q_2}=-\frac 1\eps\cdot\frac{\partial H_{res}^\dt}{\partial \wh q_2}\cdot\frac{\partial\wh q_2}{\partial q_2}=-\eps\frac{\partial  H_{res}^\dt}{\partial \wh q_2}=-\frac{\partial \wh H_{res}^\dt}{\partial \wh q_2},& \\
%   \dot{\wh r}&:=\sqrt\eps\dot{\wt r}=\sqrt\eps\cdot\frac{\partial H_{res}^\dt}{\partial \wt R}=\eps\cdot\frac{\partial H_{res}^\dt}{\partial \wh R}=\frac{\partial \wh H_{res}^\dt}{\partial \wh R},& \\
%   \dot{\wh R}&:=\sqrt\eps\dot{\wt R}=-\sqrt\eps\cdot\frac{\partial H_{res}^\dt}{\partial \wt r}=-\eps\cdot\frac{\partial H_{res}^\dt}{\partial \wh r}=-\frac{\partial \wh H_{res}^\dt}{\partial \wh r}.&
%   \end{aligned}
%   \right.
%   \ee
%}
 Moreover, $\wh H_{res}^{\dt,\eps}$ has the following expression:
\be\label{rescal-sys-4bp-2}
\wh H_{res}^{\dt,\eps}(\wh r, \wh R,\wh q_2,\wh v_2) &=&\Big[-\wh q_2\times \wh v_2+\eps^3(\frac{|\wh v_2|^2}2-\frac{1}{|\wh q_2|})+\cO(\frac{\mu\eps^7}{|\wh q_2|^3})\Big]\nonumber\\
& &+\frac12\Big[\frac{\wh R^2}{\alpha}+\alpha\wh r^2\Big]+
\Dt \wh H_{res}^\dt(\wh r, \wh q_2,\wh v_2),
\ee
where:
 \[
\Dt \wh H_{res}^{\dt,\eps}(\wh r, \wh q_2, \wh v_2)
 =\frac{\dt \eps ^3}{2}|\wh v_2|^2+ 
\wh f^\dt(\wh r, \wh q_2, \wh v_2)+ \wh g^\dt(\wh r,\wh q_2)
      \]
  with: 
  \be\label{f-fun1}
%  \left\{
%  \begin{aligned}
 \wh f^\dt(\wh r, \wh q_2, \wh v_2)&=& 
  \frac{ \frac{\dt}{\eps}(\wh q_2\times\wh v_2)^2+(2\sqrt\frac{\dt}{\eps}\wh r+\frac{\dt}{\eps}\wh r^2)
  \cdot(2\alpha \wh q_2\times \wh v_2-\alpha^2\wh r^2)}{2\alpha(1+\sqrt\frac{\dt}{\eps}\wh r)^2}
% &=& \textr{\sqrt\dt \underbrace{(2\wt r\cdot q_2\times v_2-\alpha\wt r^3)}_{f_1(\wt r,q_2\times v_2)}+\dt\underbrace{(\frac{1}{2\alpha}(q_2\times v_2)^2-\frac{7\alpha}{2}\wt r^4-3\wt r^2 q_2\times v_2)}_{f_2(\wt r,q_2\times v_2)}}+O(\dt^{3/2}),
%  \wt\Theta&=(\alpha+\mu)\wt r+(\mu-1)q_2\times v_2 &\\
%  \dot{\wt\theta}&=-2\wt r+O(\sqrt\dt) &
%  \end{aligned}
%  \right.
  \ee
  and
  \be\label{g-fun1}
  \wh g^\dt(\wh r,\wh q_2)&=&\eps ^3\Big[\frac{1-\mu}{|\wh q_2+ \mu \eps ^2(1,0)|}-
 \frac{1-\mu}{|\wh q_2+ \mu\eps ^2(1+\frac{\sqrt\dt}{\sqrt\eps}\wh r) (1,0)|}\Big]\nonumber\\
    & &+\eps ^3\Big[\frac{\mu}{|\wh q_2-\eps ^2(1-\mu)(1,0)|}-\frac{\mu}{|\wh q_2-\eps ^2(1-\mu)(1+\sqrt{\frac{\dt}{\eps}}\wh r)(1,0)|}\Big].
%    &=&\mu(1-\mu)\sqrt\dt\wt r\eps^4\Big\{\frac{\rho\cos\phi+\mu\eps^2}{\big[\rho^2+2\eps^2\mu\rho\cos\phi+\eps^4\mu^2\big]^{3/2}}\nonumber\\
%    & &+\frac{\rho\cos\phi+(\mu-1)\eps^2}{\big[\rho^2-2\eps^2(1-\mu)\rho\cos\phi+\eps^4(1-\mu)^2\big]^{3/2}}\Big\}+O(\dt\eps^4)
  \ee

For convenience, we can further transfer the system to the polar coordinate, i.e.
\[
(\wh q_2,\wh v_2)\xrightarrow{\Phi_{pol}} (\rho,\phi,\Upsilon,G), \quad\text{via } \left\{
\begin{split}
\pi_1\wh q_2&=\rho\cos\phi, \\
\pi_2\wh q_2&=\rho\sin\phi,\\
\pi_1\wh v_2&=\Upsilon\cos\phi-\frac{G}{\rho}\sin\phi, \\
\pi_2\wh v_2&=\Upsilon\sin\phi+\frac{G}{\rho}\cos\phi,
\end{split}
\right.
\]
for $(\wh r,\wh R,\rho,\phi,\Upsilon,G)\in\cD_{res}$ (see (\ref{eq:Dres})), then we get 
\be\label{res-sys-comet}
%\wh 
H_{res}^{\dt,\eps}(\wh r,\wh R,\rho,\Upsilon,\phi,G)&=&-G+\eps^3\Big[\frac12(\Upsilon^2+\frac{G^2}{\rho^2})-\frac1\rho\Big]+\frac12\Big[\frac{\wh R^2}{\alpha}+\alpha\wh r^2\Big]\nonumber\\
& &+\cO(\frac{\mu\eps^7}{|\rho|^3})+\Dt H_{res}^\dt(\wh r,\rho,\phi,\Upsilon,G),
\ee
where
\be\label{exp-R}
\Dt H_{res}^{\dt,\eps} (\wh r,\rho,\phi,\Upsilon,G)=f^\dt(\wh r, G)+g^\dt(\wh r,\rho,\phi)+\frac{\dt\eps^3} 2(\Upsilon^2+\frac{G^2}{\rho^2})
\ee
with
\be\label{exp-f}
f^\dt(\wh r, G)= \frac{ \frac{\dt}{\eps}G^2+(2\sqrt\frac{\dt}{\eps}\wh r+\frac{\dt}{\eps}\wh r^2)
  \cdot(2\alpha G-\alpha^2\wh r^2)}{2\alpha(1+\sqrt\frac{\dt}{\eps}\wh r)^2}=\cO(\sqrt{\frac{\dt}{\eps}})
%\textr{\sqrt\dt \underbrace{(2\wh r/\sqrt\eps\cdot G-\alpha\wh r^3/\sqrt{\eps^3})}_{f_1(\wt r,q_2\times v_2)}+\dt\underbrace{(\frac{1}{2\alpha}(q_2\times v_2)^2-\frac{7\alpha}{2}\wt r^4-3\wt r^2 q_2\times v_2)}_{f_2(\wt r,q_2\times v_2)}}\nonumber\\
%& &+O(\dt^{3/2})
\ee
and
\be\label{exp-g}
g^\dt(\wh r,\rho,\phi)&=& \mu(1-\mu)\sqrt{\frac{\dt}{\eps}}\wh r\eps^5\Big\{\frac{\rho\cos\phi+\mu\eps^2}{\big[\rho^2+2\eps^2\mu\rho\cos\phi+\eps^4\mu^2\big]^{3/2}}\nonumber\\
& &+\frac{\rho\cos\phi+(\mu-1)\eps^2}{\big[\rho^2-2\eps^2(1-\mu)\rho\cos\phi+\eps^4(1-\mu)^2\big]^{3/2}}\Big\}+\cO(\dt\eps^5).
\ee
Therefore, 
\be
\big\|\Dt H_{res}^{\dt,\eps}(\wh r,\rho,\phi,\Upsilon,G)\big\|_{C^2}\lesssim \sqrt{\frac{\dt}{\eps}}
\ee
as long as $\dt\lesssim o(\eps)$.

\vspace{50pt}

%\section{the Delaunay coordinate}
%\vspace{20pt}
%
%To obtain the Delaunay coordinates, to obtain Hamiltonian \eqref{0-r-a-a}, we consider a second symplectic transformation
%\begin{equation}\label{trans}
%(r,\phi,R,G)=\Psi_2(\ell,g,L,G)
%\end{equation}
%where
%\begin{itemize}
% \item $L=\sqrt a$ where $a$ is the semimajor axis of the ellipse.
% \item $G$ is the angular momentum.
% \item $\ell$ is the mean anomaly.
% \item $g$ is the argument of the perihelion with respect the primaries line.
%\end{itemize}
%
%The change of coordinates $\Psi_2$ is not fully explicit. Nevertheless, for some components it can be defined through successive changes of variables (for a more extensive explanation, one can see Appendix B.1 in \cite{FGKR}). For the position variables $(r,\phi)$ one as 
%\begin{equation}\label{def:rphiDelaunay}
%\begin{split}
%r=&r(\ell,L,G)=L^2(1-e\cos \mathfrak u(\ell))\\
%\phi&=\phi(\ell,g,L,G)=\mathfrak v(\ell)+g
%\end{split}
%\end{equation}
%where $e=e(L,G)$ is the eccentricity defined in \eqref{def:eccentricity} the two functions $\mathfrak u(\ell)$ and $\mathfrak v(\ell)$ are implicitly defined by
%\[
% \begin{split}
%\ell&=\mathfrak u-e\sin \mathfrak u\\
%\tan\frac{\mathfrak v}{2}&=\sqrt{\frac{1+e}{1-e}}\tan\frac{\mathfrak u}{2}.
%\end{split}
%\]
%

\bibliographystyle{abbrv}
\bibliography{Oscillatory}
\end{document}